\newenvironment{axioms}
{\enumerate[label=\textbf{MC\arabic*.}, ref=MC\arabic*]}
{\endenumerate}
\newcommand\varitem[1]{\item[\textbf{MC\arabic{enumi}\rlap{$#1$}.}]%
	\edef\@currentlabel{MC\arabic{enumi}{$#1$}}}
\theoremstyle{definition}
\newtheorem{point}[theorem]{}
\numberwithin{equation}{section}
\DeclareMathOperator{\Img}{im}
\DeclareMathOperator{\coker}{coker}
\DeclareMathOperator{\colim}{colim}
\newcommand{\M}{\mathcal{M}} 
\newcommand{\Ch}{\text{\rm Ch}} 
\newcommand{\ch}{\text{\rm ch}} 
\newcommand{\D}[1][n]{D^{#1}}
\newcommand{\Sp}[1][n]{S^{#1}}
\newcommand{\id}{1}
\newcommand{\intS}[2][n]{I^{#1}\left[#2\right)}
\newcommand{\vect}{\text{\rm vect}_{K}}
\newcommand{\posr}{\left[0,\infty\right)}
\newcommand{\tame}{\text{\rm tame}\left(\posr, \ch\right)}
\newcommand{\tamem}{\text{\rm tame}\left(\posr,\mathcal{M}\right)}
\newcommand{\xrightarrowdbl}[2][]{%
  \xrightarrow[#1]{#2}\mathrel{\mkern-14mu}\rightarrow
}
\begin{document}
	
\title{Invariants for tame parametrised chain complexes}


\author{Wojciech Chach{\'o}lski}             
\email{wojtek@math.kth.se}
\address{KTH Stockholm}

\author{Barbara Giunti}             
\email{barbara.giunti@unimore.it}
\address{University of Modena and Reggio Emilia}

\author{Claudia Landi}             
\email{claudia.landi@unimore.it}
\address{University of Modena and Reggio Emilia}

\classification{55PXX, 55N31.}

\keywords{Topological data analysis, cofibrant approximations, minimality, persistence theory.}

\begin{abstract}
	We set the foundations for a new approach to Topological Data Analysis (TDA) based on homotopical methods at chain complexes level.
	We present the category of tame parametrised chain complexes as a comprehensive environment that includes several cases that usually TDA handles separately, such as persistence modules, zigzag modules, and commutative ladders.
	We extract new invariants in this category using a model structure and various minimal cofibrant approximations.
	Such approximations and their invariants retain some of the topological, and not just homological, aspects of the objects they approximate. 
\end{abstract}

\maketitle

\section*{Introduction}\label{intro}	

Data analysis is often about simplifying, ignoring most of the information available and extracting what might be meaningful for the task at hand. 
The same strategy of extracting summaries is also at the core of topology. 
In recent years, these two branches have merged, giving rise to Topological Data Analysis (TDA) \cite{carlsson_topology_data}.  

TDA can benefit from a broad spectrum of existing homotopical tools for extracting such summaries. 
Currently, the most popular is persistent homology.  
The first step in persistence theory is to transform data into spatial information via, for example, the Vietoris-Rips construction.
The second step is typically the extraction of the homology of the obtained spaces, resulting in a so-called persistence module, effectively studied by enumerating its indecomposables \cite{oudot}.

Despite its success, TDA based on persistent homology has some limitations. 
Firstly, one is limited to  objects  for which it is possible to list their  indecomposable summands \cite{oudot},
and whose decompositions can be computed algorithmically.
For example, the class of \emph{commutative ladders} cannot be analysed using its indecomposables because it is of wild representation type \cite{escolar_ladders_ind, commutative_ladders}.
On the other hand, the indecomposables of the class of \emph{zigzag modules} are fully described, but so far there is no efficient software to analyse them \cite{zigzag_carlsson, zigzag_3}.
Secondly, applying homology might be too drastic, disregarding a large amount of geometric information.

The main goal of this paper is to show how to use homotopy theory not only to overcome the previous issues but also to open the way towards new invariants. 
The same strategy of disregarding some information and focusing on aspects that might be relevant is also at the core of homotopy theory, with {\em colocalization} being an example of such a process. 
In colocalization, the simplification is achieved by approximating arbitrary objects by other objects that are simpler and more manageable, such as the class of \emph{cofibrant objects} in a model category. 
Our work is based on the realisation that the category $\tame$ of tame $\posr$-parametrised chain complexes over a field admits a model structure for which there is a surprisingly simple decomposition theorem describing all indecomposable cofibrant objects (see Theorem~\ref{dec_theorem}). 
This is so even though the entire category $\tame$ is of wild representation type. 
The structure theorem \ref{dec_theorem} identifies cofibrant objects in $\tame$ with sequences of persistence diagrams augmented with points on the diagonal (see Section~\ref{section_Betti}) which we call Betti diagrams.

The cofibrant objects can be then used to approximate arbitrary objects in $\tame$. 
Proving that such minimal approximations exist (see Theorem~\ref{sdvgsdfhsfgn}) has been essential in this work.

Model categories are convenient for ensuring the existence of certain morphisms or approximations. 
A common difficulty in working with model categories, however, is the lack of algorithmic constructions producing such morphisms and approximations. 
Approximations in model categories are often constructed using universal properties and require performing large limits.  Extracting calculable invariants from such approximations, which is essential in TDA, is often not feasible. 
In this article, we make a great effort to describe all the constructions, factorisations, and approximations explicitly. All the steps we perform for the tame $\posr$-parametrised chain complexes in perspective can be implemented.

Considering tame $\posr$-parametrised chain complexes instead of vector spaces has another advantage.
Persistence modules, zigzag modules, and commutative ladders are special objects in $\tame$. 
Thus, this category allows for a comprehensive theory in which  different objects 
that are handled separately by standard persistence theory can be studied and compared together. 
Furthermore, for persistence modules, cofibrant minimal approximations provide complete invariants (see \ref{asdfdfhdfn}).

In conclusion, we propose a refined approach to the persistence pipeline:
first, convert the input into a parametrised simplicial complex.
Second, extract a parametrised chain complex. 
Third, form a minimal cofibrant approximation of the extracted parametrised chain complex.
Finally, represent the minimal cofibrant approximation by its Betti diagrams.
\smallskip

{\bf Related works.}
The model structure described in Section~\ref{section_par_objects} is a special case of a projective model
structures on a tame subcategory of functor categories \cite{heller, hirschhorn}.

The structure theorem~\ref{dec_theorem} describing cofibrant objects in $\tame$
appears also in, for example, \cite{framed_morse, structural_filtered, related_work_decomposition},
although the language of model categories is not used there.
An interpretation from the point of view of Morse theory was given in \cite{framed_morse}.
In \cite{structural_filtered}, Meehan, Pavlichenko and Segert show that the category of filtered chain complexes is a Krull-Schmidt category.
In \cite{related_work_decomposition}, Usher and Zang generalise the theory of barcodes to filtered Floer-type complexes, considering chain complexes of infinite dimension whose parametrisation is not tame.
They prove a singular value decomposition theorem for such complexes
and identify two types of barcodes of them: the \emph{verbose} and the \emph{concise}. 
In the finite case, such barcodes correspond respectively to the Betti diagrams and the minimal Betti diagrams of cofibrant objects in our setting (see Section~\ref{section_Betti}).

The point of view of homotopy theory is entering the TDA subject also for purposes different from ours. 
For example, \cite{Blumberg17, persistent_homotopy, frosini1999,jardine2020} are about lifting the stability theorem of persistence to homotopy stability theorems, to make it applicable to a wider class of datasets.

\section{Minimality}\label{section_minimality}

Let $\mathcal M$ be a model category \cite{dwyerspalinski, quillen}.
This means that three classes of morphisms in $\mathcal{M}$ are chosen:
{\bf weak equivalences} ($\xrightarrow{\sim}$), {\bf fibrations} ($\twoheadrightarrow$), and {\bf cofibrations}
($\hookrightarrow$).
These classes and $\M$ are required to satisfy  the following axioms:
\begin{axioms}
	\item \label{MC1} Finite limits and colimits exist in $\mathcal{M}$.
	\item \label{MC2} If $f$ and $g$ are morphisms in $\mathcal{M}$ such that $g f$ is defined and if two of the three morphisms $f$, $g$, $gf$ are weak equivalences, then so is the third.
	\item \label{MC3} If $f$ is a retract of $g$ and $g$ is a fibration, a cofibration, or a weak equivalence, then so is $f$.
	\item \label{MC4} Consider a  commutative square 
	in $\mathcal M$ consisting of the solid morphisms:
	\[
	\begin{tikzcd}[row sep=13pt]
	X \arrow[hook']{d}[swap]{\alpha} \arrow[r] 
	& E\arrow[two heads]{d}{\beta}
	\\
	Y\arrow[r] \arrow[ru,dotted]
	& B
	\end{tikzcd}
	\]
	Then a morphism, depicted by the dotted arrow and making this  diagram commutative, exists under either of the following two assumptions:  (i) $\alpha$ is a cofibration and a weak equivalence and $\beta$  is a fibration, or
	(ii) $\alpha$ is a cofibration and $\beta$ is a fibration and a weak equivalence.
	\item \label{MC5} Any morphism $g$ can be factored in two ways: 
	(i) $g = \beta\alpha$, where $\alpha$ is a cofibration and $\beta$ is both a fibration and a weak equivalence, and 
	(ii) $g = \beta\alpha$, where $\alpha$ is both a cofibration and a weak equivalence and $\beta$ is a fibration.
\end{axioms}

In particular, \ref{MC1} guarantees the existence of the initial object, denoted by $\emptyset$, and of the terminal object, denoted by $\ast$.
An object $X$ in a model category $\mathcal M$ is called {\bf cofibrant} if the morphism $\emptyset \to X$ is a cofibration. 
If the morphism $X\to \ast$ is a fibration, then $X$ is called {\bf fibrant}.

Axiom \ref{MC5} above guarantees existence of certain factorisations of morphisms. 
It does not specify any uniqueness. 
Typically, a morphism in a model category admits many such factorisations.
There are however model categories in which among all these  factorisations there is a canonical one called minimal \cite{auslander_reiten_smalo, Roig93}: 

\begin{definition}\label{def_minimal_morphism}
	Let $g\colon X\to Y$ be a morphism in $\mathcal M$. 
	A factorisation $g=\beta \alpha$, where $\alpha$ is cofibration and $\beta$ is a fibration and a weak equivalence, is called {\bf minimal} if every morphism $\phi$ which makes the following diagram commutative is an isomorphism:
	\[\begin{tikzcd}[row sep=5]
	& A\ar[bend left=15,two heads]{dr}{\beta}[swap,pos=0.6]{\sim} \\
	X\ar[bend left=15,hook]{ur}{\alpha}\ar[bend right=15,hook]{dr}[swap]{\alpha} \ar[bend left=10]{rr}[pos=.2,description]{g}& & Y\\
	& A\ar[bend right=15,two heads]{ur}[swap]{\beta}[pos=0.6]{\sim}\ar[bend left=15, crossing over,from=uu,pos=.6,"\phi"]
	\end{tikzcd}
	\]
	%
	
	A minimal factorisation of $\emptyset\to X$ is called a {\bf minimal cover} of $X$.
\end{definition}

According to the above definition, we can think about a minimal cover of $X$ as a morphism $\beta\colon \text{cov}(X)\to X$ such that: 
(i) $\text{cov}(X)$ is cofibrant, 
(ii) $\beta$ is both a fibration and a weak equivalence, and 
(iii) any morphism $\phi$ which makes the following diagram commutative is an isomorphism:
\[
\begin{tikzcd}[row sep=13pt]
& \text{cov}(X)\arrow[two heads]{d}[right]{\beta}[left]{\sim}
\\
\text{cov}(X)\arrow[two heads]{r}[below]{\beta}[above]{\sim} 
\arrow[bend left=10]{ur}[above]{\phi} 
& X
\end{tikzcd}
\]

Minimal factorisations are unique:

\begin{proposition}\label{min_fact_unique}
	Let $g\colon X\to Y$ be a morphism in  $\mathcal M$.
	Assume $\beta\alpha=g=\beta'\alpha'$ are minimal factorisations. Then there is an isomorphism $\phi$ making the following
	diagram commutative:
	\[
	\begin{tikzcd}[column sep=3em,row sep=13pt]
	X\arrow[hook]{r}[above]{\alpha'}\arrow[hook']{d}[left]{\alpha} 
	& A'\arrow[two heads]{d}[right]{\beta'}[left]{\sim}
	\\
	A\arrow[two heads]{r}[below]{\beta}[above]{\sim}\arrow{ur}[description]{\phi} 
	& Y
	\end{tikzcd}
	\]
\end{proposition}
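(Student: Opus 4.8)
The plan is to manufacture $\phi$ together with a candidate inverse by two applications of the lifting axiom~\ref{MC4}, and then to promote these mutually compatible maps to an isomorphism using the minimality hypothesis from \Cref{def_minimal_morphism}.

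First I would construct $\phi\colon A\to A'$. Since $\beta'\alpha'=g=\beta\alpha$, the square with left edge $\alpha\colon X\hookrightarrow A$, top edge $\alpha'\colon X\to A'$, bottom edge $\beta\colon A\to Y$, and right edge $\beta'\colon A'\to Y$ commutes. In this square $\alpha$ is a cofibration while $\beta'$ is simultaneously a fibration and a weak equivalence, so \ref{MC4}(ii) yields a diagonal filler $\phi\colon A\to A'$ with $\phi\alpha=\alpha'$ and $\beta'\phi=\beta$. Interchanging the roles of the two factorisations, the same axiom applied to the square with $\alpha'$ on the left and $\beta$ on the right produces a map $\psi\colon A'\to A$ satisfying $\psi\alpha'=\alpha$ and $\beta\psi=\beta'$.

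Next I would examine the two composites. A short computation gives $(\psi\phi)\alpha=\psi\alpha'=\alpha$ and $\beta(\psi\phi)=\beta'\phi=\beta$, so the self-map $\psi\phi\colon A\to A$ is compatible with both $\alpha$ and $\beta$; this is precisely the configuration occurring in the minimality condition of \Cref{def_minimal_morphism}, and since $g=\beta\alpha$ is a minimal factorisation, $\psi\phi$ must be an isomorphism. Symmetrically, $(\phi\psi)\alpha'=\alpha'$ and $\beta'(\phi\psi)=\beta'$, so minimality of the factorisation $g=\beta'\alpha'$ forces $\phi\psi$ to be an isomorphism as well.

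Finally, having both composites invertible makes $\phi$ itself invertible: $(\psi\phi)^{-1}\psi$ is a left inverse and $\psi(\phi\psi)^{-1}$ a right inverse of $\phi$, whence the two agree and $\phi$ is an isomorphism. As $\phi$ already satisfies $\phi\alpha=\alpha'$ and $\beta'\phi=\beta$, it is the desired isomorphism making the square commute. I expect the only point requiring care to be the verification that $\psi\phi$ and $\phi\psi$ genuinely instantiate the commutative diagram of \Cref{def_minimal_morphism}, so that the minimality hypothesis applies verbatim; the remaining steps are routine diagram chases through \ref{MC4}.
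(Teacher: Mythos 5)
Your proof is correct and follows essentially the same route as the paper: both arguments obtain $\phi$ and $\psi$ as lifts via \ref{MC4}(ii), observe that the composites $\psi\phi$ and $\phi\psi$ satisfy the commutativity constraints of \Cref{def_minimal_morphism} and are therefore isomorphisms by minimality, and conclude that $\phi$ is an isomorphism. Your write-up is merely more explicit about the final two-sided-inverse step, which the paper leaves as "consequently, so are $\phi$ and $\psi$."
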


\begin{proof}
	Let $\phi$ and $\psi$ be any  morphisms making the following diagram commute, which exist
	by the lifting axiom \ref{MC4}:	
	\[
	\begin{tikzcd}[column sep=4em,row sep=23pt]
	X\arrow[hook]{r}[above]{\alpha'}\arrow[hook']{d}[left]{\alpha} 
	& A' \arrow[two heads]{d}[right]{\beta'}[left]{\sim} 
	\arrow[dotted, bend left=15]{dl}[description]{\psi}
	\\
	A \arrow[two heads]{r}[below]{\beta}[above]{\sim}
	\arrow[bend left=15, dotted]{ur}[description]{\phi} 
	& Y
	\end{tikzcd}
	\]
	Then by the definition of  minimal factorisations, the compositions $\phi\psi$ and $\psi\phi$ are isomorphisms. Consequently, so are $\phi$ and $\psi$.
\end{proof}

Two objects $X$ and $Y$ in  $\mathcal M$ are called {\bf weakly equivalent} if there is a sequence of weak equivalences of the form:
\[
\begin{tikzcd}
X 
&  A_0\arrow{l}[above]{\sim}\arrow{r}[above]{\sim} 
&A_1 &\arrow{l}[above]{\sim}\cdots 
&
A_k \arrow{l}[above]{\sim}\arrow{r}[above]{\sim} 
& Y
\end{tikzcd}
\]

Similarly to factorisations of morphisms in a model category, the collection of objects weakly equivalent to a given object is large. 
There are model categories, however, where this collection contains a canonical object called a minimal representative:

\begin{definition}\label{def_minimal_object}
	An object $X$ in $\mathcal M$ is called {\bf minimal} if it is cofibrant, fibrant, and any weak equivalence $\phi\colon X\to X$ is an isomorphism.
	A  {\bf minimal representative} of an object $X$ in $\M$  is a minimal object in $\M$ which is weakly equivalent to $X$. 
\end{definition}

Minimal representatives are unique up to isomorphisms:

\begin{proposition}\label{min_rep_unique}
	Let $X'$ and $Y'$ be minimal representatives of respectively $X$ and $Y$.
	Then $X$ and $Y$ are weakly equivalent if and only if $X'$ and $Y'$ are isomorphic. 
\end{proposition}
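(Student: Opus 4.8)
The plan is to prove the two implications separately, since the interesting content lies entirely in the forward direction. For the easy direction, suppose $X'\cong Y'$. Recall that in any model category every isomorphism is a weak equivalence, so the given isomorphism $X'\to Y'$ counts as a weak equivalence. By definition of minimal representative, $X'$ is weakly equivalent to $X$ and $Y'$ to $Y$ (Definition~\ref{def_minimal_object}), so I would simply concatenate the zigzag $X\sim X'$, the isomorphism $X'\cong Y'$, and the zigzag $Y'\sim Y$ into a single zigzag of weak equivalences from $X$ to $Y$, witnessing that $X$ and $Y$ are weakly equivalent.

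For the forward direction, assume $X$ and $Y$ are weakly equivalent. Concatenating the zigzags $X'\sim X$, $X\sim Y$, and $Y\sim Y'$ shows at once that $X'$ and $Y'$ are weakly equivalent. It therefore suffices to prove the reduced statement that two minimal objects which are weakly equivalent are in fact isomorphic. From here on I fix a zigzag of weak equivalences connecting the minimal objects $X'$ and $Y'$, and note that by Definition~\ref{def_minimal_object} both are in particular cofibrant and fibrant.

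The core of the argument is to collapse this zigzag into a single weak equivalence $f\colon X'\to Y'$. Here I would invoke the standard homotopy-theoretic fact that a zigzag of weak equivalences between cofibrant-fibrant objects can be realised by one weak equivalence in a chosen direction; equivalently, that the localisation to $\mathrm{Ho}(\mathcal M)$ sends weakly equivalent bifibrant objects to isomorphic ones, and that every such isomorphism is represented by an honest weak equivalence $X'\to Y'$. This reduction is the step I expect to be the main obstacle, as it is precisely where the nontrivial machinery enters: one must replace the intermediate objects of the zigzag by bifibrant ones and use the calculus of homotopy classes of maps between bifibrant objects to invert the weak equivalences pointing the wrong way.

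Once such an $f$ is in hand, the rest is formal and is where minimality does its work. By Whitehead's theorem a weak equivalence between bifibrant objects is a homotopy equivalence, so there is $g\colon Y'\to X'$ with $gf\simeq \mathrm{id}_{X'}$ and $fg\simeq \mathrm{id}_{Y'}$. Since homotopic maps between bifibrant objects are weak equivalences simultaneously, and the identities are weak equivalences, both $gf\colon X'\to X'$ and $fg\colon Y'\to Y'$ are weak equivalences. Minimality of $X'$ and of $Y'$ then forces $gf$ and $fg$ to be isomorphisms (Definition~\ref{def_minimal_object}). Finally, a morphism $f$ for which $gf$ and $fg$ are both isomorphisms is itself an isomorphism, with inverse $(gf)^{-1}g=g(fg)^{-1}$, so $X'\cong Y'$, completing the proof.
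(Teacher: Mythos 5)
Your proof is correct and follows essentially the same strategy as the paper: both arguments reduce to producing weak equivalences between the bifibrant objects $X'$ and $Y'$ whose composites are self-weak-equivalences, which minimality (Definition~\ref{def_minimal_object}) then forces to be isomorphisms. The only difference is that the paper invokes the standard fact about bifibrant objects once in each direction to obtain weak equivalences $\phi\colon X'\to Y'$ and $\psi\colon Y'\to X'$ directly, whereas you obtain the reverse map as a homotopy inverse via Whitehead's theorem and then use homotopy invariance of weak equivalences --- a slightly longer but equally valid route to the same composites.
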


\begin{proof}
	If $X'$ and $Y'$ are isomorphic, then $X$ and $Y$ are weakly equivalent. 
	Assume $X$ and $Y$ are weakly equivalent. 
	Then $X'$ and $Y'$ are also weakly equivalent.
	Since they are both cofibrant and fibrant there are weak equivalences $\phi\colon X'\xrightarrow{\sim} Y'$ and $\psi\colon Y'\xrightarrow{\sim} X'$. 
	By the definition of the minimality, the compositions $\phi\psi$ and $\psi\phi$ are isomorphisms. 
	Consequently, so are $\phi$ and $\psi$ and hence 
	$X'$ and $Y'$ are isomorphic.
\end{proof}

Proposition~\ref{min_fact_unique} and Proposition~\ref{min_rep_unique} ensure the uniqueness of minimal factorisations, minimal covers and minimal representatives. 
These propositions however do not imply their existence, which has to be proven separately and it does depend on the considered model category.

\begin{definition}\label{minal_axioms}
	A model category satisfies the {\bf minimal factorisation axiom} if all minimal factorisations exist in this category. 
	It satisfies the {\bf minimal representative  axiom} if all minimal representatives exist in this category.
\end{definition}

Many model categories, particularly of combinatorial flavour,  satisfy the minimal factorisation axiom. 
However the standard model structure on topological spaces~\cite{dwyerspalinski} does not. 

\section{Tame $\posr$-parametrised objects}\label{section_par_objects}

Let $\M$ be a category.
The symbol $\posr$ denotes the poset of non-negative real numbers. 
Functors of the form $X\colon \posr\to \M$ are also referred to as  $\posr$-parametrised objects.
The value of $X$ at $t$ in $\posr$ is denoted by $X^{t}$ and $X^{s\leq t}\colon X^s\to X^t$ denotes the morphism in $\M$ that $X$ assigns to $s\leq t$. 
The morphism $X^{s\leq t}$ is also referred to as the transition morphism in $X$ from $s$ to $t$.

\begin{definition}\label{def_tame}
	A sequence $\tau_0<\cdots<\tau_k$ in $ \posr$ {\bf discretises} $X\colon \posr\to \M$ if $X^{s\leq t}\colon X^s\to X^t$ may fail to be an isomorphism only when there is $a$ in $[k]$ such that $s < \tau_{a}\leq t$.
	A functor $X\colon \posr\to \M$ is called {\bf tame} if there is a sequence that discretises it.
	The symbol $\tamem$ denotes the category whose objects are tame functors $X\colon \posr\to \M$ and whose morphisms are the natural transformations.
\end{definition}

If $\tau_0<\cdots<\tau_k$ discretises $X\colon \posr\to \M$, then the transitions of the restrictions of $X$ to the intervals $[0,\tau_0)$,\ldots, $[\tau_{k-1},\tau_k)$, and $[\tau_k,\infty)$  are isomorphisms.  
Note that if $\tau_0<\cdots<\tau_k$ discretises $X\colon \posr\to \M$, then so does any of its refinements (a sequence $\mu_0<\cdots<\mu_n$ is a {\bf refinement} of $\tau_0<\cdots<\tau_k$ if $\{\tau_0\ldots,\tau_k\}$ is a subset of  $\{\mu_0\ldots,\mu_n\}$). 

\begin{point}\label{point_kan_extension}
	{\em Kan extensions.}
	Consider a sequence of $k$ composable morphisms in $\M$:
	\[
	\begin{tikzcd}[column sep=40pt]
	X^0 \ar[r, "X^{0<1}"]
	& \cdots \ar[r, "X^{{k-1}<k}"] 
	& X^{k}
	\end{tikzcd}
	\]
	
	\noindent
	Such a sequence encodes a functor $X\colon [k]\to \M$, where $[k]$ is the standard poset on the set $\{0,\ldots,k\}$.
	Consider also a sequence  $\tau_0<\cdots<\tau_k$ of elements  in $\posr$, which  encodes an inclusion of categories $\tau\colon [k]\subset \posr$.
	The {\bf (left) Kan extension} of $X$ along $\tau$ \cite[Sect. X.3]{maclane} is a functor $LX\colon \posr\to \M$ whose values are given by:
	\[
	LX^t=
	\begin{cases}
	\emptyset 
	& \text{ if } t<\tau_0
	\\
	X^{\max\{a\ |\ \tau_a\leq t\}}
	&\text{ if } t\geq \tau_0
	\end{cases}
	\]
	
	For morphisms, $LX^{s<t}$ is the identity if $\max\{a\ |\ \tau_a\leq s\}=\max\{a\ |\ \tau_a\leq t\}$, and otherwise it is the composition of:
	\[
	X^{\max\{a\ |\ \tau_a\leq s\}<\max\{a\ |\ \tau_a\leq s\}+1}, 
	\ldots, 
	X^{\max\{a\ |\ \tau_a\leq t\}-1<\max\{a\ |\ \tau_a\leq t\}}
	\]
	
	The functor $LX\colon \posr\to \M$ is tame with  $\tau_0<\cdots<\tau_k$ as a discretising sequence.   
	
	To describe a natural transformation $g\colon LX\to Y$ to any other functor $Y\colon\posr\to\M$, it is enough to specify a sequence of morphisms $\{g^{\tau_a}\colon X^{\tau_a}\to Y^{\tau_a}\}_{a=0,\dots,k}$ for which the following diagram commutes for every $a=1,\ldots,k$:
	\[
	\begin{tikzcd}[column sep=40pt,row sep=13]
	X^{\tau_{a-1}} \ar[r, "X^{\tau_{a-1}<\tau_a}"] 
	\ar[d, "g^{\tau_{a-1}}"'] 
	& X^{\tau_{a}} \ar{d}{g^{\tau_a}}
	\\
	Y^{\tau_{a-1}} \ar[r, "Y^{\tau_{a-1}<\tau_a}"] 
	& Y^{\tau_{a}}
	\end{tikzcd}
	\]
	
	If $k=0$, for an object $X$ in $\M$ (representing a functor $X\colon [0]\to\M$) and an element $\tau_0$ in $\posr$ (representing an inclusion $\tau_0\colon[0]\subset\posr$), the induced Kan extension is a functor $LX\colon\posr\to \M$ such that $LX^t=\emptyset$ if $t<\tau_0$ and $LX^t=X$ if $\tau_0\leq t$.
	In this case, the set of natural transformations $LX\to Y$ is in bijection with the set of morphisms in $\M$ from $X$ to $Y^{\tau_0}$. 
	If $k=1$, for a morphism $X^{0<1}\colon X^0\to X^1$ (representing a functor $X\colon [1]\to\M$) and two elements $\tau_0<\tau_1$ in $\posr$ (representing an inclusion $[1]\subset\posr$), the induced  Kan extension is a functor $LX\colon\posr\to \M$ such that $LX^t=\emptyset$ if $t<\tau_0$, $LX^t=X^0$ if $\tau_0\leq t<\tau_1$, and $LX^t=X^1$ if $\tau_1\leq t$. 
	In this case, the set of natural transformations $LX\to Y$ is in bijection with commutative squares of the form:
	\[
	\begin{tikzcd}[column sep=35pt,row sep=13]
	X^{0}\ar{r}{X^{0<1}}\ar{d} 
	& X^{1}\ar{d}
	\\
	Y^{\tau_0}\ar{r}{Y^{\tau_0<\tau_1}} 
	& Y^{\tau_1}
	\end{tikzcd}
	\]
	
	Let $X\colon\posr\to\M$ be tame with $0=\tau_0<\cdots<\tau_k$ as a discretising sequence.
	Then $X$ is isomorphic to the Kan extension along $0= \tau_0<\cdots<\tau_k$ of the following sequence of morphisms :
	\[
	\begin{tikzcd}[column sep=4em]
	X^0 \ar[r, "X^{0\leq \tau_1}"]
	& \cdots \ar[r, "X^{\tau_{k-1}<\tau_k}"] 
	& X^{\tau_k}
	\end{tikzcd}
	\]	
\end{point}

\begin{point}{\em Factorisation.}\label{point_factorisation}
	Let $\M$ admit all finite colimits.
	Let $g\colon X\to Y$ be a morphism in $\tamem$ and $0=\tau_0<\cdots<\tau_k$ a sequence discretising both $X$ and $Y$. 
	By induction on $a$ in $[k]$, define morphisms $\bar{g}^{\tau_a}\colon X^{\tau_a}\to Q^{\tau_a}$ and $\hat{g}^{\tau_a}\colon Q^{\tau_a}\to Y^{\tau_a}$ in $\M$ as follows:
	
	\noindent
	For $a=0:\quad (\bar{g}^{0}\colon X^{0}\to Q^{0}) :=
	(\id\colon X^{0}\to X^{0}) 
	\qquad
	(\hat{g}^{0}\colon Q^{0}\to Y^{0}):=(g^0\colon X^{0}\to Y^{0})$
	
	\noindent	
	For $a=1\ldots,k:\qquad \
	Q^{\tau_a}:=\colim (Y^{\tau_{a-1}}\xleftarrow{g^{\tau_{a-1}}} X^{\tau_{a-1}}\xrightarrow{X^{\tau_{a-1}<\tau_a}}X^{\tau_{a}})
	$
	\smallskip
	
	\noindent
	$\bar{g}^{\tau_a}\colon X^{\tau_a}\to Q^{\tau_a}$ and $\hat{g}^{\tau_a}\colon Q^{\tau_a}\to X^{\tau_a}$ are the unique morphisms making the following diagram commutative, where the inside square is pushout:
	\[
	\begin{tikzcd}[column sep=1.2cm,row sep=18pt]
	X^{\tau_{a-1}} \ar[r,"X^{\tau_{a-1}<\tau_a}"] \ar[d, "g^{\tau_{a-1}}"']
	& X^{\tau_{a}} \ar[ddr,"g^{\tau_a}", bend left=17]
	\ar[d, "\bar{g}^{\tau_a}"']
	& [-0.4cm]
	\\
	Y^{\tau_{a-1}} \ar[rrd, bend right=15, "Y^{\tau_{a-1}<\tau_a}"'] \ar[r]
	& Q^{\tau_a} \arrow{dr}[pos=0.2]{\hat{g}^{\tau_a}}
	\\
	[-0.7cm]
	& 
	& Y^{\tau_{a}}
	\end{tikzcd}
	\]	
	For $a=1,\ldots,k$, define $Q^{\tau_{a-1}<\tau_a}\colon Q^{\tau_{a-1}}\to Q^{\tau_{a}}$ to be the composition of the morphism $ Y^{\tau_{a-1}}\to Q^{\tau_{a}}$ represented by the bottom horizontal arrow in the above diagram and $\hat{g}^{\tau_{a-1}}\colon Q^{\tau_{a-1}} \to Y^{\tau_{a-1}}$. 
	Let $Q\colon\posr\to \M$ be the tame functor given by the Kan extension of the sequence of morphisms $\{Q^{\tau_{a-1}<\tau_a}\}$ along $0=\tau_0<\cdots<\tau_k$ (see \ref{point_kan_extension}). 
	Finally, denote by $\bar{g}\colon X\to Q$ and $\hat{g}\colon Q\to Y$ the natural transformation given by $\{\bar{g}^{\tau_a}\}_{a=0,\ldots,k}$ and $\{\hat{g}^{\tau_a}\}_{a=0,\ldots,k}$ (see \ref{point_kan_extension}). 
	Note that $g=\hat{g}\bar{g}$. 
	
	The isomorphism type of the functor $Q$ and the factorisation $g=\hat{g}\bar{g}$ do not depend on the choice of the sequence that discretises $X$ and $Y$. 
	If $\bar{f}\colon X\to P$ and $\hat{f}\colon P\to Y$ are natural transformations constructed with respect to another such a sequence, then there is a unique isomorphism $\phi\colon Q\to P$ for which the following diagram commutes:
	\[\begin{tikzcd}[row sep=6]
	& Q\ar[bend left=15]{dr}{\hat{g}} \\
	X\ar[bend left=15]{ur}{\bar{g}}\ar[bend right=15]{dr}[swap]{\bar{f}} \ar[bend left=10]{rr}[pos=.2,description]{g}& & Y\\
	& P\ar[bend right=15]{ur}[swap]{\hat{f}}\ar[bend left=15, crossing over,from=uu,pos=.6,"\phi"]
	\end{tikzcd}
	\]
\end{point} 

\begin{theorem}\label{tame_is_model_cat}
	Let $\M$ be a model category.
	The following choices of weak equivalences, fibrations and cofibrations form a model structure on $\tamem$. 
	A morphism $g\colon X\to Y$  in $\tamem$ is a
	\begin{itemize}
		\item weak equivalence if $g^t\colon X^t\to Y^t$ is a weak equivalence for all $t$.
		\item fibration if $g^t\colon X^t\to Y^t$ is a fibration for all $t$.
		\item cofibration if $\hat{g}^t\colon Q^t\to Y^t$ (see \ref{point_factorisation}) is a cofibration for all $t$. 
	\end{itemize}
\end{theorem}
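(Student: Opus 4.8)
The plan is to fix, for each morphism under consideration, a common discretising sequence $0=\tau_0<\cdots<\tau_k$ and to run every construction by induction along it, reducing each axiom to the corresponding statement in $\M$. The guiding observation is that the three chosen classes are exactly the pointwise weak equivalences, the pointwise fibrations, and the maps whose relative latching maps $\hat{g}^{\tau_a}\colon Q^{\tau_a}\to Y^{\tau_a}$ of \ref{point_factorisation} are cofibrations; this is the Reedy structure (equivalently, since $[k]$ is a direct poset, the projective structure) on $\M^{[k]}$ transported to tame functors. Because $X$ and $Y$ are tame with $\tau$ as a discretising sequence, the transitions of $X$ and $Y$ on each interval $[\tau_{a-1},\tau_a)$ are isomorphisms, so for $t$ in that interval $g^t$ is conjugate to $g^{\tau_{a-1}}$ by isomorphisms. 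Hence $g$ is a weak equivalence (resp. fibration) in the sense of the theorem if and only if $g^{\tau_a}$ is one in $\M$ for every $a$, and I shall check these properties only at the $\tau_a$. The well-definedness of the cofibration class, independent of the chosen sequence, is furnished by the last paragraph of \ref{point_factorisation}.

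Axioms \ref{MC1}--\ref{MC3} are quickly dispatched. For \ref{MC1}, given a finite diagram of tame functors I take the union of their discretising sequences and form the pointwise finite limit or colimit in $\M$; a pointwise (co)limit of diagrams whose transitions on each interval are isomorphisms again has this property, so the result is tame with the same sequence and is the (co)limit in $\tamem$. Axiom \ref{MC2} is immediate from \ref{MC2} for $\M$ applied pointwise. For \ref{MC3}, weak equivalences and fibrations are retract-closed because they are so pointwise; for cofibrations I use that the factorisation of \ref{point_factorisation} is functorial in the morphism, so a retract of $g$ by $f$ in the arrow category induces a retract of $\hat{g}^{\tau_a}$ by $\hat{f}^{\tau_a}$ in $\M$, whence $\hat{f}^{\tau_a}$ is a cofibration.

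For \ref{MC5} I construct the factorisations by induction along $\tau$, factoring one relative latching map in $\M$ at each stage. To factor $g=\beta\alpha$ as in \ref{MC5}(i), set $Z^{\tau_0}$ by factoring $g^{\tau_0}$ in $\M$ as a cofibration followed by a trivial fibration; at stage $a$ form the pushout $L:=X^{\tau_a}\cup_{X^{\tau_{a-1}}}Z^{\tau_{a-1}}$, observe that $g^{\tau_a}$ and $Y^{\tau_{a-1}<\tau_a}\beta^{\tau_{a-1}}$ glue to a map $L\to Y^{\tau_a}$ (they agree on $X^{\tau_{a-1}}$ by naturality of $g$ and the inductive relation $\beta^{\tau_{a-1}}\alpha^{\tau_{a-1}}=g^{\tau_{a-1}}$), and factor this map in $\M$ as a cofibration $j_a\colon L\hookrightarrow Z^{\tau_a}$ followed by a trivial fibration $\beta^{\tau_a}$. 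Defining $\alpha^{\tau_a}$ and the transition $Z^{\tau_{a-1}}\to Z^{\tau_a}$ through the two legs of $L$ composed with $j_a$ makes $\alpha$ and $\beta$ natural and yields $\beta\alpha=g$; moreover the latching object of the resulting $Z$ identifies with $L$ and $\hat\alpha^{\tau_a}=j_a$, so $\alpha$ is a cofibration while $\beta$ is a pointwise trivial fibration. Part \ref{MC5}(ii) is identical, factoring each latching map as a trivial cofibration $j_a$ followed by a fibration; here one checks in addition that $\alpha$ is a weak equivalence, which holds because $X^{\tau_a}\to L$ is the pushout of the trivial cofibration $\alpha^{\tau_{a-1}}$, so $\alpha^{\tau_a}=j_a\circ(X^{\tau_a}\to L)$ is a composite of trivial cofibrations.

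Finally, I verify the lifting axiom \ref{MC4} by constructing the dotted map inductively along $\tau$. Given the outer square with $\alpha\colon X\hookrightarrow Y'$ and $\beta\colon E\twoheadrightarrow B$, and a lift $h^{\tau_{a-1}}$ already built, the top map $X^{\tau_a}\to E^{\tau_a}$ and the composite $E^{\tau_{a-1}<\tau_a}h^{\tau_{a-1}}$ assemble into a map $Q^{\tau_a}\to E^{\tau_a}$ out of the latching pushout, producing a square in $\M$ with left edge $\hat\alpha^{\tau_a}$ and right edge $\beta^{\tau_a}$; solving it by \ref{MC4} in $\M$ yields $h^{\tau_a}$, and the latching relations force $h^{\tau_a}$ to commute with the transitions, so the $h^{\tau_a}$ assemble into a natural transformation. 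In case (ii) the left edge $\hat\alpha^{\tau_a}$ is a cofibration and $\beta^{\tau_a}$ a trivial fibration, so the lift exists. The main obstacle is case (i): there I must know that a morphism which is simultaneously a cofibration and a weak equivalence in $\tamem$ has relative latching maps $\hat\alpha^{\tau_a}$ that are \emph{trivial} cofibrations in $\M$. I expect to prove this by a simultaneous induction on $a$ showing that $\bar\alpha^{\tau_a}$, $\hat\alpha^{\tau_a}$ and $\alpha^{\tau_a}$ are all trivial cofibrations: $\bar\alpha^{\tau_a}$ is the pushout of the trivial cofibration $\alpha^{\tau_{a-1}}$, hence a weak equivalence, so two-out-of-three applied to $\alpha^{\tau_a}=\hat\alpha^{\tau_a}\bar\alpha^{\tau_a}$ upgrades the cofibration $\hat\alpha^{\tau_a}$ to a trivial cofibration. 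This latching characterisation of trivial cofibrations, resting on two-out-of-three and the stability of trivial cofibrations under pushout in $\M$, is the one genuinely non-formal ingredient; everything else is bookkeeping along the discretising sequence.
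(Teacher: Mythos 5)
Your proposal is correct and follows essentially the same route as the paper: induction along a common discretising sequence, pointwise verification of \ref{MC1}--\ref{MC3}, and construction of factorisations and lifts one latching pushout at a time. The one point you flag as non-formal --- that a cofibration which is a pointwise weak equivalence has trivial-cofibration latching maps $\hat\alpha^{\tau_a}$, proved via pushout-stability of trivial cofibrations and two-out-of-three --- is exactly the step the paper's proof of \ref{MC4} uses implicitly without spelling it out, so your argument supplies a detail rather than diverging from it.
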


Due to tameness, to prove $g\colon X\to Y$ in $\tamem$ is a weak equivalence, or a fibration, or a cofibration, only finitely many verifications need to be performed. 
If $0=\tau_0<\cdots<\tau_k$ discretises both $X$ and $Y$, then  $g$ is a weak equivalence (respectively, a fibration) if and only if $g^{\tau_a}\colon X^{\tau_{a}}\to Y^{\tau_{a}}$ is a weak equivalence (respectively, a fibration) in $\M$ for any $a=0,\ldots,k$. 
Similarly, $g$ is a cofibration if and only if $\hat{g}^{\tau_a}\colon Q^{\tau_{a}}\to Y^{\tau_{a}}$ is a cofibration in $\M$ for any $a=0,\ldots,k$. 
It is important to realise however that for $g$ to  be a cofibration is it not enough for $g^t$ to be a cofibration for all $t$.

\begin{proposition}\label{char_cofibrant} 
	Let $\M$ be a model category. 
	\begin{enumerate}
		\item If $g\colon X\to Y$ is a cofibration in $\tamem$, then
		$g^t\colon X^t\to Y^t$ is a cofibration in $\M$ for any $t$ in $\posr$.
		\item An object $X$ in $\tamem$ is cofibrant if and only if $X^{0}$ is cofibrant and, for any $s<t$ in $\posr$, the transition morphism $X^{s<t}\colon X^{s}\to X^{t}$ is a cofibration in $\M$.
	\end{enumerate}
\end{proposition}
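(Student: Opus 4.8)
The plan is to reduce both statements to the explicit factorisation $g=\hat{g}\bar{g}$ of~\ref{point_factorisation}, which simultaneously produces the object $Q$ and the two maps $\bar{g}^{\tau_a}\colon X^{\tau_a}\to Q^{\tau_a}$ and $\hat{g}^{\tau_a}\colon Q^{\tau_a}\to Y^{\tau_a}$ that enter the definition of a cofibration in~\ref{tame_is_model_cat}. Throughout I will use that refinements of discretising sequences are again discretising and that the factorisation is independent of the chosen sequence, so it suffices to argue at the points $\tau_a$ of any convenient discretising sequence; an arbitrary $t$ can always be absorbed into such a sequence by refinement.

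For part (1), I would fix a discretising sequence $0=\tau_0<\cdots<\tau_k$ for both $X$ and $Y$ and prove by induction on $a$ that $g^{\tau_a}$ is a cofibration in $\M$. Since $\bar{g}^0=\id$ and $\hat{g}^0=g^0$, the base case $g^0=\hat{g}^0$ is a cofibration by hypothesis. For the inductive step I would exploit the factorisation identity $g^{\tau_a}=\hat{g}^{\tau_a}\bar{g}^{\tau_a}$: here $\hat{g}^{\tau_a}$ is a cofibration because $g$ is a cofibration, while $\bar{g}^{\tau_a}$ is, by construction, the cobase change of $g^{\tau_{a-1}}$ along $X^{\tau_{a-1}<\tau_a}$ in the pushout square defining $Q^{\tau_a}$. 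As cofibrations are stable under pushout, the inductive hypothesis that $g^{\tau_{a-1}}$ is a cofibration makes $\bar{g}^{\tau_a}$ a cofibration, and hence $g^{\tau_a}$ a composite of two cofibrations. For a general $t$ I would simply include $t$ among the $\tau_a$ and read off the conclusion.

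For part (2), I would specialise the construction of~\ref{point_factorisation} to the morphism $\emptyset\to X$ out of the initial object (the constant functor $\emptyset$). Here each pushout degenerates: $Q^{\tau_a}=\colim(X^{\tau_{a-1}}\leftarrow\emptyset\to\emptyset)\cong X^{\tau_{a-1}}$, the map $\bar{g}^{\tau_a}$ is the canonical map out of $\emptyset$, and $\hat{g}^{\tau_a}$ is identified with the transition $X^{\tau_{a-1}<\tau_a}$, while at $a=0$ one has $\hat{g}^0=(\emptyset\to X^0)$. By~\ref{tame_is_model_cat}, $X$ is cofibrant precisely when every $\hat{g}^{\tau_a}$ is a cofibration, that is, precisely when $X^0$ is cofibrant and each consecutive transition $X^{\tau_{a-1}<\tau_a}$ is a cofibration. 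To pass from the consecutive transitions to an arbitrary $X^{s<t}$, I would refine the sequence so that $s$ and $t$ occur among the $\tau_a$ and write $X^{s<t}$ as the composite of the intermediate consecutive transitions (those lying inside a constancy interval being isomorphisms); stability of cofibrations under composition then yields the claim, and the converse direction is immediate since the consecutive transitions are a special case.

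The only genuine subtlety, and the step I would treat most carefully, is the bookkeeping inside the factorisation: correctly identifying $\bar{g}^{\tau_a}$ as the pushout of $g^{\tau_{a-1}}$, so that the single model-category fact that cofibrations are closed under cobase change and composition powers the whole induction in part~(1), and verifying the degeneration of the pushouts for the initial morphism in part~(2). Everything else—the reduction to the points $\tau_a$, the independence of the factorisation from the discretising sequence, and the extension from consecutive to arbitrary transitions—is routine once these identifications are in place.
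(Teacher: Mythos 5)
Your proposal is correct and follows essentially the same route as the paper: part (1) is the same induction using that $\bar{g}^{\tau_a}$ is the cobase change of $g^{\tau_{a-1}}$ and that cofibrations are closed under pushout and composition, and part (2) is the same specialisation of the factorisation of \ref{point_factorisation} to $\emptyset\to X$, where $Q^{\tau_a}\cong X^{\tau_{a-1}}$ and $\hat{g}^{\tau_a}$ becomes the transition morphism. The only difference is that you spell out the refinement/composition bookkeeping for arbitrary $t$ and arbitrary $s<t$, which the paper leaves implicit.
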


\begin{proof}
	\textit{1.}\quad
	Assume $g\colon X\hookrightarrow Y$ is a cofibration. Let  $0=\tau_0<\cdots<\tau_k$  be a sequence discretising both $X$ and $Y$.
	By definition $g^0=g^{\tau_0}=\hat{g}^{\tau_0}$ is a cofibration. 
	Since in a model category cofibrations  are preserved by compositions and taking  pushouts  along any morphism, the indicated arrows in the following commutative diagram are cofibrations for any $a=1,\ldots,k$:
	\[
	\begin{tikzcd}[column sep=1.2cm,row sep=18pt]
	X^{\tau_{a-1}} \ar[r, "X^{\tau_{a-1}<\tau_a}"]
	\ar[d, hook', "g^{\tau_{a-1}}"']
	& X^{\tau_{a}} \ar[ddr, hook, "g^{\tau_a}", bend left=15] 
	\ar[d, hook', "\bar{g}^{\tau_a}"']
	& [-0.4cm]
	\\
	Y^{\tau_{a-1}} \ar[rrd, bend right=13, "Y^{\tau_{a-1}<\tau_a}"'] \ar[r]
	& Q^{\tau_a} \ar[hook]{dr}[pos=0.2]{\hat{g}^{\tau_a}}
	\\
	[-0.6cm]
	& 
	& Y^{\tau_{a}}
	\end{tikzcd}
	\]
	Thus, for any $a$ in $[k]$, the morphism $g^{\tau_a}\colon X^{\tau_a}\to Y^{\tau_a}$ is a cofibration. 
	Tameness can be then used to conclude that $g^{t}$ is a cofibration for any $t$ in $\posr$.
	
	\noindent
	\textit{2.}\quad
	Let  $\emptyset\colon \posr\to\M$ be the initial object in $\tamem$, which is the constant functor whose value is the initial object in $\M$.
	Consider a morphism  $g\colon \emptyset\to X$ in $\tamem$.
	Let $0=\tau_{0}<\cdots<\tau_{k}$ be a sequence  discretising $X$.  
	Then $Q^{0}=\emptyset$ and $Q^{\tau_a}=X^{\tau_{a-1}}$ for $a>0$.
	Furthermore, $\hat{g}^{0}=(\emptyset \to X^0)$ and    $\hat{g}^{\tau_{a}}\colon Q^{\tau_{a}}=X^{\tau_{a-1}}\to X^{\tau_{a}}$ is the transition morphism in $X$ for $a>0$. The statement is then a direct consequence of the definition of a cofibration in $\tamem$.
\end{proof}

\begin{proof}[Proof of Theorem~\ref{tame_is_model_cat}]
	\noindent
	\ref{MC1}: This is a consequence of the fact that there is  a sequence that discretises all elements in a finite collection of tame functors.
	
	\noindent
	\ref{MC2} and \ref{MC3}: These follows from the fact that  $\M$ satisfies these axioms, and from the functoriality of the mediating morphism $\hat{g}$.
	
	\noindent
	\ref{MC4}: Consider a commutative square in $\tamem$:
	\[
	\begin{tikzcd}[row sep=13]
	X \arrow[hook']{d}[left]{\alpha} \arrow[r] 
	& E\arrow[two heads,"\beta"]{d}
	\\
	Y\arrow[r] 
	& B
	\end{tikzcd}
	\]
	where either 
	$\alpha$ is a cofibration and $\beta$ is a fibration and a weak equivalence, or $\alpha$ is a cofibration and a weak equivalence and $\beta$ is a fibration. 
	We need to show that there is a morphism ${\phi} \colon Y\to E$ which if added to the above square would make the obtained diagram commutative. 
	Let us choose a sequence $0=\tau_0<\cdots<\tau_k$ that discretises all functors in this square.
	We are going to define by induction on $a$ in $[k]$ morphisms ${\phi}^{\tau_a}\colon Y^{\tau_a}\to E^{\tau_a}$. 
	We then use this sequence to get the desired  ${\phi}\colon Y\to E$.
	
	Set $\phi^{0}\colon Y^{0}\to E^{0}$ to be any morphism in $\M$ that makes the following square commutative. 
	It exists by the axiom \ref{MC4} in $\M$.
	\[
	\begin{tikzcd}[row sep=13]
	X^0 \arrow[hook']{d}[left]{\alpha^0} \arrow[r] & E^0\arrow[d,"\beta^0"]
	\\
	Y^0\arrow[r]\arrow{ur}[description]{\phi^0} 
	& B^0
	\end{tikzcd}
	\]
	Assume $a\geq 1$ and that we have defined $\phi^{\tau_b}\colon Y^{\tau_b}\to E^{\tau_b}$ for $b<a$. 
	We can then form the following commutative diagram, where the indicated arrows are cofibrations by Proposition~\ref{char_cofibrant}.1: 	
	\[
	\begin{tikzcd}[column sep=2.5em,row sep=20pt]
	& [-0.3cm]
	& [-0.3cm] X^{\tau_{a-1}}\arrow{rrr} \arrow{rrr} \arrow[hook']{ddd}[description]{\alpha^{\tau_{a-1}}}\arrow{dll}
	& [-0.3cm]
	& [-0.3cm]
	& X^{\tau_a}\arrow{dll} 
	\arrow[hook,bend left=30]{ddd}{\alpha^{\tau_a}} 
	\arrow[hook']{dd}[left]{\bar{\alpha}^{\tau_a}}
	\\
	E^{\tau_{a-1}}\arrow[two heads]{ddd}[left]{\beta^{\tau_{a-1}}} 
	\arrow[crossing over]{rrr} 
	& 
	& 
	& E^{\tau_a}
	\\
	[-0.3cm]
	& 
	& 
	& 
	& 
	& Q^{\tau_a} \arrow[hook']{d}[description]{\hat{\alpha}^{\tau_a}} 
	\arrow[dotted]{ull}[description]{\phi'}
	\\
	[0.25cm]
	& 
	& Y^{\tau_{a-1}} \arrow{dll} \arrow{rrr} \arrow{urrr} \arrow{uull}{\phi^{\tau_{a-1}}}
	& 
	& 
	& Y^{\tau_a} \arrow{dll}
	\arrow[dotted,bend left=15,crossing over]{uull}[description,near start]{\phi^{\tau_a}} 
	\\
	B^{\tau_{a-1}} \arrow{rrr} 
	& 
	& 
	& B^{\tau_a}\arrow[from=uuu,crossing over, two heads, near start,swap,"\beta^{\tau_a}"]
	\end{tikzcd}
	\]
	All the horizontal arrows represent the transition morphisms, $\phi'\colon Q^{\tau_a}\to E^{\tau_a}$ is induced by the universal property of a pushout, and $\phi^{\tau_a}\colon Y^{\tau_a}\to E^{\tau_a}$ is any morphism that makes the following diagram commute, whose existence is guaranteed by axiom \ref{MC4}:
	\[
	\begin{tikzcd}[column sep=30pt,row sep=13]
	E^{\tau_a}\arrow[two heads]{d}[left]{\beta^{\tau_a}} 
	& Q^{\tau_a}\arrow[hook]{d}{\hat{\alpha}^{\tau_a}}\arrow[dotted]{l}[above]{\phi'}
	\\
	B^{\tau_a} 
	& Y^{\tau_a} \arrow{l}\arrow[dotted]{lu}[description,pos=0.4]{\phi^{\tau_a}}
	\end{tikzcd}
	\]	
	
	\noindent
	\ref{MC5}: Consider a morphism $g\colon A\to X$ in $\tamem$.
	Let us choose a sequence $0=\tau_0<\cdots<\tau_k$ that discretises both $A$ and $X$.
	By induction on $a=0,\ldots,k$, we are going to construct the appropriate factorisations $g^{\tau_a}=\beta^{\tau_a}\alpha^{\tau_a}$.  
	Set $\alpha^0\colon A^0\hookrightarrow Y^0$ and $\beta^0\colon Y^0\twoheadrightarrow X^0$ to be the factorisation of $g^0\colon A\to X$, where one of $\alpha^{0}, \beta^0$ is also a weak equivalence.
	Such a factorisation exists by \ref{MC5} in $\M$. 
	Assume $a\geq 1$ and that we have defined $\alpha^{\tau_b}\colon A^{\tau_b}\hookrightarrow Y^{\tau_b}$ and $\beta^{\tau_b}\colon Y^{\tau_b}\twoheadrightarrow X^{\tau_b}$ for $b<a$. 
	We can then define:
	\[
	Q^{\tau_a}:=
	\colim(
	\begin{tikzcd}[column sep=4em] A^{\tau_a} 
	& A^{\tau_{a-1}} \arrow{l}[swap]{A^{\tau_{a-1}<\tau_a}} 
	\arrow[hook]{r}{\alpha^{\tau_{a-1}}} 
	& Y^{\tau_{a-1}}
	\end{tikzcd}
	)
	\]
	and form the following commutative diagram:	
	\[
	\begin{tikzcd}[column sep=3em,row sep=15pt]
	A^{\tau_{a-1}} \arrow[bend left=20]{rrrr}[description]{g^{\tau_{a-1}}} 
	\ar[rr, hook, "\alpha^{\tau_{a-1}}"]
	\ar[dd, "A^{\tau_{a-1}<\tau_a}"']
	& 
	& Y^{\tau_{a-1}} \arrow[two heads]{rr}{\beta^{\tau_{a-1}}}\arrow{dd} 
	& 
	& X^{\tau_{a-1}} \arrow{dd}[right]{X^{\tau_{a-1}<\tau_a}}
	\\
	& 
	& 
	& Y^{\tau_a} \arrow[dotted, two heads,bend left=10]{dr}{\beta^{\tau_a}}
	\\
	A^{\tau_a} \arrow[hook]{rr}{\alpha'}
	\arrow[bend right=15]{rrrr}[description]{g^{\tau_a}}
	\arrow[hook,bend left=15,dotted,crossing over]{rrru}{\alpha^{\tau_a}}
	& 
	& Q^{\tau_a} \arrow{rr}{\beta'} \arrow[hook]{ur}{\alpha''} 
	& 
	& X^{\tau_a}
	\end{tikzcd}
	\] 
	where the left square is pushout and $\beta'\colon Q^{\tau_a}\to X^{\tau_a}$ is induced by the universal property of the pushout.
	The morphisms $\alpha''\colon Q^{\tau_a}\to Y^{\tau_a}$ and $\beta^{\tau_a}\colon Y^{\tau_a}\to X^{\tau_a}$ form the appropriate factorisation of $\beta'$ into the composition of either a cofibration which is a weak equivalence and a fibration, or a cofibration and a fibration which is a weak equivalence.
	Set $\alpha^{\tau_a}\colon A^{\tau_a}\to Y^{\tau_a}$ to be the composition $\alpha''\alpha'$, and $Y^{\tau_{a-1}<\tau_{a}}\colon Y^{\tau_{a-1}}\to Y^{\tau_a}$ to be the composition of $Y^{\tau_{a-1}}\to Q^{\tau_a}$ and $\alpha''\colon P^{\tau_a}\to Y^{\tau_a}$.
	Define $Y$ to be the Kan extension along $0=\tau_0<\cdots<\tau_k$ of the sequence $\{Y^{\tau_{a-1}<\tau_{a}}\}_{a=1,\ldots,k}$ (see \ref{point_kan_extension}). 
	Let $\alpha\colon A\to Y$ and $\beta\colon Y\to X$ be the natural transformations induced by the sequences of morphisms $\{\alpha^{\tau_a}\colon A^{\tau_a}\to Y^{\tau_a}\}_{a=0,\ldots,k}$ and $\{\beta^{\tau_a}\colon Y^{\tau_a}\to X^{\tau_a}\}_{a=0\ldots,k}$. 
	By construction, $\alpha$ is a cofibration and $\beta$ is a fibration. 
	Furthermore, depending on the choice of the factorisations of $\beta'\colon Q^{\tau_a}\to X^{\tau_a}$, either $\alpha$ or $\beta$ is a weak equivalence.
\end{proof}

\begin{point}\label{point_min_factorisation_tamem}
	{\em Minimal factorisations}.
	Assume $\M$ satisfies the minimality axiom.
	Consider a morphism $g\colon A\to X$ in $\tamem$. 
	Perform the same constructions as in the proof of \ref{MC5} but instead of taking arbitrary factorisations consider the minimal ones. 
	In step zero, we take morphisms $\alpha^0\colon X^0\hookrightarrow Y^0$ and $\beta^0\colon Y^0\xrightarrowdbl{\sim}X^0$ that form a minimal factorisation of $g^0\colon A\to X$.
	Analogously, in the $a$-th step we take morphisms $\alpha''\colon Q^{\tau_a}\hookrightarrow Y^{\tau_a}$ and $\beta^{\tau_a}\colon Y^{\tau_a}\xrightarrowdbl{\sim} X^{\tau_a}$ which form a minimal  factorisation of $\beta'\colon Q^{\tau_a}\to X^{\tau_a}$. 
	We claim that the obtained morphisms $\alpha\colon A\hookrightarrow Y$ and $\beta\colon Y\xrightarrowdbl{\sim} X$ form a minimal factorisation of $g\colon A\to X$.
	We just proved:
\end{point}

\begin{theorem}\label{sdvgsdfhsfgn}
	If the model category $\M$ satisfies the minimal factorisation axiom, then so does $\tamem$.
\end{theorem}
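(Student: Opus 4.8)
The construction is the one outlined in~\ref{point_min_factorisation_tamem}, so the plan is first to pin it down and then to verify the single claim left open there. Given $g\colon A\to X$ in $\tamem$, I would fix a sequence $0=\tau_0<\cdots<\tau_k$ discretising both $A$ and $X$ and run the inductive factorisation from the proof of~\ref{MC5} verbatim, with the sole change that at every stage the arbitrary factorisation provided by~\ref{MC5} in $\M$ is replaced by the \emph{minimal} one, which exists by hypothesis. This produces $\alpha\colon A\to Y$ and $\beta\colon Y\to X$ with $g=\beta\alpha$; that $\alpha$ is a cofibration and $\beta$ is a fibration and a weak equivalence follows exactly as in~\ref{MC5} via the componentwise criteria recorded after Theorem~\ref{tame_is_model_cat}, since each $\beta^{\tau_a}$ is chosen to be a fibration and a weak equivalence. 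The entire content of the theorem is then that this factorisation is minimal in the sense of Definition~\ref{def_minimal_morphism}.

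To establish minimality I would take an arbitrary $\phi\colon Y\to Y$ in $\tamem$ with $\phi\alpha=\alpha$ and $\beta\phi=\beta$ and show it is an isomorphism. The first step is a reduction to $\M$: since $Y$ is discretised by $0=\tau_0<\cdots<\tau_k$, the transformation $\phi$ is an isomorphism in $\tamem$ if and only if each component $\phi^{\tau_a}\colon Y^{\tau_a}\to Y^{\tau_a}$ is an isomorphism in $\M$, the inverse being assembled componentwise and forced to be natural by the naturality of $\phi$. I would then prove that each $\phi^{\tau_a}$ is an isomorphism by induction on $a$. The base case is immediate: $\phi^0\alpha^0=\alpha^0$ and $\beta^0\phi^0=\beta^0$ exhibit $\phi^0$ as an admissible endomorphism of the minimal factorisation $g^0=\beta^0\alpha^0$ in $\M$, hence an isomorphism.

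The inductive step is where I expect the only real difficulty. Recall that $Y^{\tau_a}$ is obtained as the minimal factorisation $\beta'=\beta^{\tau_a}\alpha''$ of the comparison map $\beta'\colon Q^{\tau_a}\to X^{\tau_a}$ out of the pushout $Q^{\tau_a}=\colim(A^{\tau_a}\leftarrow A^{\tau_{a-1}}\to Y^{\tau_{a-1}})$. Assuming $\phi^{\tau_{a-1}}$ is an isomorphism, the relations $\phi^{\tau_{a-1}}\alpha^{\tau_{a-1}}=\alpha^{\tau_{a-1}}$ and $\beta^{\tau_{a-1}}\phi^{\tau_{a-1}}=\beta^{\tau_{a-1}}$ let the universal property of the pushout produce an automorphism $\bar\phi\colon Q^{\tau_a}\to Q^{\tau_a}$ (the identity on the $A^{\tau_a}$-leg and $\phi^{\tau_{a-1}}$ on the $Y^{\tau_{a-1}}$-leg) satisfying $\beta'\bar\phi=\beta'$, and naturality of $\phi$ yields the intertwining identities $\phi^{\tau_a}\alpha''=\alpha''\bar\phi$ and $\beta^{\tau_a}\phi^{\tau_a}=\beta^{\tau_a}$. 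The obstacle is precisely that $\bar\phi$ need not be the identity --- the induction only delivers an \emph{isomorphism} at level $\tau_{a-1}$, not the identity --- so $\phi^{\tau_a}$ twists $\alpha''$ rather than fixing it, and minimality of the step-$a$ factorisation cannot be applied to $\phi^{\tau_a}$ directly.

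To remove the twist I would observe that precomposing the cofibration $\alpha''$ with the isomorphism $\bar\phi$ gives a second factorisation $\beta'=\beta^{\tau_a}(\alpha''\bar\phi)$ of the same map, which is again minimal because the defining condition of minimality is insensitive to precomposition by an isomorphism. Proposition~\ref{min_fact_unique} then supplies an isomorphism $\Phi\colon Y^{\tau_a}\to Y^{\tau_a}$ with $\Phi\alpha''=\alpha''\bar\phi$ and $\beta^{\tau_a}\Phi=\beta^{\tau_a}$. Since $\phi^{\tau_a}$ satisfies these very identities, the composite $\phi^{\tau_a}\Phi^{-1}$ fixes $\alpha''$ and commutes with $\beta^{\tau_a}$, so it is an isomorphism by minimality of $\beta'=\beta^{\tau_a}\alpha''$; as $\Phi$ is an isomorphism, so is $\phi^{\tau_a}$. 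This closes the induction, every $\phi^{\tau_a}$ is an isomorphism, and hence $\phi$ is an isomorphism, proving the factorisation minimal.
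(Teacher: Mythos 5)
Your construction is exactly the one the paper uses in \ref{point_min_factorisation_tamem} (run the inductive proof of \ref{MC5} replacing each factorisation in $\M$ by a minimal one), so the approach is the same; the paper in fact only \emph{asserts} the minimality of the resulting factorisation, whereas you verify it. Your verification is correct: the reduction to the components $\phi^{\tau_a}$, the identification of the induced automorphism $\bar\phi$ of the pushout $Q^{\tau_a}$ as the genuine obstacle (it need not be the identity), and its removal via the stability of minimality under precomposition with an isomorphism together with Proposition~\ref{min_fact_unique} all hold up.
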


\begin{corollary}
	If the model category $\M$  satisfies the minimal factorisation axiom, then so does $\text{\rm tame}([0,\infty)^k,\mathcal{M})$
	for any $k=1,2,\ldots$.
\end{corollary}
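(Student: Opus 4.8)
The plan is to argue by induction on $k$, with the base case $k=1$ being exactly Theorem~\ref{sdvgsdfhsfgn}. The engine of the induction is the observation that tame functors out of a product poset can be ``curried'': for any category $\M$ there is an isomorphism of categories
\[
\tamei[\M]{\posr^{k}} \;\cong\; \tamei[\mathcal N]{\posr}, \qquad \mathcal N:=\tamei[\M]{\posr^{k-1}},
\]
sending $X\colon\posr^{k}\to\M$ to the functor $t\mapsto X(-,t)$ that assigns to each level $t$ of the last coordinate the $(k-1)$-parameter slice $X(-,t)\colon\posr^{k-1}\to\M$. So the first thing I would do is make this isomorphism precise and check that it really lands in the tame subcategories on both sides.

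The key compatibility is that the grid notion of tameness for the product poset (the evident multi-parameter extension of Definition~\ref{def_tame}) matches the iterated notion: $X\colon\posr^k\to\M$ is discretised by a grid $\{\tau^{(1)}_\bullet,\dots,\tau^{(k)}_\bullet\}$ if and only if each slice $X(-,t)$ is discretised by $\{\tau^{(1)}_\bullet,\dots,\tau^{(k-1)}_\bullet\}$ (so that the curried functor takes values in $\mathcal N$) and the curried functor $t\mapsto X(-,t)$ is discretised, as a $\posr$-parametrised object of $\mathcal N$, by $\tau^{(k)}_\bullet$ (the slice transition $X(-,s)\to X(-,t)$ being a natural isomorphism whenever $s,t$ lie in a common slab of the last coordinate). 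I expect this to be a routine but slightly tedious unwinding of definitions; the only point to watch is that a single grid must serve simultaneously for all slices, which is immediate since a grid discretising $X$ restricts to one discretising every slice.

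Next I would promote this to an isomorphism of model categories. By the inductive hypothesis $\mathcal N$ is a model category satisfying the minimal factorisation axiom: it is a model category by Theorem~\ref{tame_is_model_cat} applied $k-1$ times, and satisfies the axiom by the inductive case of the present corollary. I then equip $\tamei[\M]{\posr^{k}}$ with the model structure of Theorem~\ref{tame_is_model_cat} transported along the currying isomorphism, i.e.\ the projective structure built over $\mathcal N$. Under this transport, weak equivalences and fibrations correspond on the nose, since both are detected objectwise and ``objectwise in $\tamei[\mathcal N]{\posr}$'' unwinds to ``objectwise in $\tamei[\M]{\posr^{k}}$''. For cofibrations I would use the factorisation of \ref{point_factorisation} together with the characterisation of Proposition~\ref{char_cofibrant}: the mediating object $Q$ and the comparison map $\hat g$ computed over $\mathcal N$ agree with those computed directly over $\M$, because the pushouts defining $Q$ are formed objectwise in $\mathcal N$, hence objectwise-and-slicewise in $\M$.

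Finally, the conclusion is immediate: Theorem~\ref{sdvgsdfhsfgn}, applied with $\mathcal N$ in place of $\M$, states that $\tamei[\mathcal N]{\posr}$ satisfies the minimal factorisation axiom, and transporting this property along the currying isomorphism gives the axiom for $\tamei[\M]{\posr^{k}}$, completing the induction; concretely, the minimal factorisations produced slicewise by \ref{point_min_factorisation_tamem} over $\mathcal N$ are exactly the minimal factorisations in $\tamei[\M]{\posr^{k}}$. I expect the main obstacle to be the bookkeeping involved in upgrading the plain currying bijection to a genuine isomorphism of model categories---in particular verifying that the $\hat g$-based definition of cofibrations is preserved---rather than any conceptual difficulty, since all three classes of morphisms are ultimately governed by objectwise data that the currying isomorphism manifestly respects.
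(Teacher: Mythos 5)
Your argument is correct and is exactly the intended one: the paper offers no written proof, treating the statement as an immediate consequence of Theorem~\ref{sdvgsdfhsfgn} via the currying identification $\text{\rm tame}([0,\infty)^{k},\mathcal M)\cong\text{\rm tame}([0,\infty),\text{\rm tame}([0,\infty)^{k-1},\mathcal M))$ and induction on $k$. Your additional care about matching the grid and iterated notions of tameness (via common refinement of discretising sequences) and about the $\hat g$-based definition of cofibrations is sensible but does not change the route.
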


\section{Chain complexes of vector spaces}\label{section_chain_complexes}

Let $K$ be a field and ${\mathbf N}=\{0,1,\ldots\}$ the set of natural numbers. 
A (non-negatively graded) chain complex of $K$-vector spaces is a sequence of linear functions $X=\{\delta_n\colon X_{n+1}\to X_n\}_{n\in{\mathbf N}}$ of $K$-vector spaces, called {\bf differentials}, such that $\delta_{n}\delta_{n+1}=0$ for all $n$ in ${\mathbf N}$. 
In the notation of the differentials we often ignore their indexes and simply denote them by $\delta$, or $\delta_X$ to indicate which chain complex is considered.

A chain complex $X$ is called {\bf compact} if $\bigoplus_{n\in{\mathbb N}} X_n$ is finite dimensional \cite{adamek_rosicky}. 
This happens if and only if $X_n$ is  finite dimensional for all $n$ and $X_n$ is trivial for $n\gg 0$.

\begin{point}{\em Homology.}\label{point_chain_complexes}
	The following vector spaces are called respectively the $n$-th cycles and the $n$-th boundaries of $X$:
	\[
	Z_nX:=
	\begin{cases}
	X_0 
	& \text{ if }n=0
	\\
	\text{ker}(\delta_{n-1}\colon X_n\to X_{n-1})&\text{ if }n\geq 1
	\end{cases}
	\ \ \ \ \ \ \ 
	B_nX:=\text{im}(\delta_{n}\colon  X_{n+1}\to X_n)
	\]
	
	Since $\delta_{n}\delta_{n+1}=0$, the $n$-th boundaries  $B_nX$ is a vector subspace of the $n$-th cycles $Z_nX$.
	The quotient $Z_nX/B_nX$ is called the $n$-th {\bf homology} of $X$ and is denoted by $H_nX$.
	We write $ZX$, $BX$ and $HX$ to denote the non-negatively  graded vector spaces $\{Z_nX\}_{n\in {\mathbf N}}$, $\{B_nX\}_{n\in {\mathbf N}}$, and $\{H_nX\}_{n\in {\mathbf N}}$.
\end{point}

\begin{point}{\em Model structure.}\label{point_model_cat_ch}
	A morphism of chain complexes $g\colon X\to Y$ is a sequence of linear functions $\{g_n\colon X_n\to Y_n\}_{n\in {\mathbf N}}$ such that $g_n \delta_{X}=\delta_Y g_{n+1}$ for all $n$. 
	Such a morphism maps boundaries and cycles in $X$ to  boundaries and cycles in $Y$. 
	The induced map on homologies is denoted by $Hg\colon HX\to HY$.
	If $Hg\colon HX\to HY$ is an isomorphism, then $g$ is a weak equivalence.
	If $g_n\colon X_n\to Y_n$ is an epimorphism for all $n\geq 1$
	(no assumption is made for $n=0$), 
	then $g$ is a fibration. 
	If $g_n\colon X_n\to Y_n$ is a monomorphism for all $n\geq 0$, 
	then $g$ is a cofibration.
	This choice of weak equivalences, fibrations and cofibrations defines a model structure on the category of chain complexes, denoted by $\Ch$
	(see \cite{dwyerspalinski, quillen}). 
	Consider the full subcategory of $\Ch$ given by compact chain complexes.
	The same choices of weak equivalences, fibrations, and cofibrations, as for $\Ch$, define a model structure on such a subcategory, denoted by $\ch$.
\end{point}

\begin{point}{\em Suspension.}\label{point_suspension}
	The {\bf suspension} of a chain complex $X$, denoted by $SX$ is a chain complex such that:
	\[
	\delta_{n}\colon (SX)_{n+1}\to (SX)_n=\begin{cases} X_0\to 0 & \text{ if } n=0
	\\
	-\delta_{n-1}\colon X_n\to  X_{n-1} & \text{ if }n>0\end{cases}
	\]
	Analogously, the suspension of a morphism $g\colon X\to Y$ of chain complex is a morphism $Sg\colon SX\to SX$ such that:
	\[
	(Sg)_n\colon (SX)_n\to (SY)_n=
	\begin{cases}
	0\colon 0\to 0 & \text{ if } n=0 \\ 
	g_{n-1}\colon X_{n-1}\to Y_{n-1} & \text{ if } n>0\
	\end{cases}
	\]
	The assignment $g\mapsto Sg$ is a functor denoted by $S\colon \Ch\to\Ch$.
	
	Note that $H_0(SX)=0$ and $H_{n}SX$ is isomorphic to $H_{n-1}X$ for all $n>0$. 
	Furthermore, if $f$ is a cofibration or a weak equivalence, then so is $Sf$, and if $f$ is a fibration, then 
	$Sf$ is a fibration if and only if $f_{0}$ is an epimorphism.

	The {\bf desuspension} of a chain complex $X$, denoted by $S^{-1}X$, is  a chain complex such that:
	\[
	\delta_{n}\colon (S^{-1}X)_{n+1}\to (S^{-1}X)_n=
	\begin{cases} 
	-\delta_1\colon X_2\to  Z_1X  
	& \text{ if } n=0
	\\
	-\delta_{n+1}\colon X_{n+2}\to  X_{n+1} 
	& \text{ if }n>0\end{cases}
	\]
	
	Analogously, the desuspension of a morphism $g\colon X\to Y$ of chain complex is a morphism $S^{-1}g\colon S^{-1}X\to S^{-1}X$ such that:
	\[
	(S^{-1}g)_n\colon (S^{-1}X)_n\to (S^{-1}Y)_n=
	\begin{cases}
	g_1\colon Z_1X \to Z_1Y  & \text{ if } n=0 \\ 
	g_{n+1}\colon X_{n+1}\to Y_{n+1} & \text{ if } n>0\
	\end{cases}
	\]
	The assignment $g\mapsto S^{-1}g$ is a functor denoted by $S^{-1}\colon\Ch\to\Ch$.
	
	Note that $H_{n}S^{-1}X$ is isomorphic to $H_{n+1}X$.
	If $f$ is a fibration, cofibration or a weak equivalence, then so is $S^{-1}f$. 
	Furthermore, $S^{-1}SX$ is isomorphic to $X$, and
	$SS^{-1}X$ is isomorphic to $X$ if and only if $X_0=0$. 	
\end{point}

We now provide some explicit constructions of chain complexes   used essentially in~\ref{point_standard_dec_min_rep_ch} to compute the standard decomposition and the minimal representative in $\ch$. 

\begin{point}\label{point_cofiber}
	{\em Cofiber sequences.}
	Let $f\colon X\to Y$ be a morphism of chain complexes. 
	Define a chain complex $Cf$, called the \textbf{cofiber} of $f$, a cofibration $i\colon Y\hookrightarrow Cf$, and a fibration $p\colon Cf\twoheadrightarrow SX$, as follows:
	\[
	\begin{tikzcd}[column sep=3.5em, row sep=18pt, ampersand replacement=\&]
	Y\arrow[hook]{d}{\scriptscriptstyle  i} 
	\& [-1cm]{\cdots}\ar[r, "\delta_{Y}"] 
	\& Y_3\arrow{r}{\scriptscriptstyle \delta_Y} 
	\arrow{d}{\left[\begin{smallmatrix} \scriptscriptstyle \id \\ \scriptscriptstyle 0 \end{smallmatrix}\right]}
	\& Y_2\arrow{r}{\scriptscriptstyle \delta_Y}
	\arrow{d}{\left[\begin{smallmatrix} \scriptscriptstyle \id \\ \scriptscriptstyle 0 \end{smallmatrix}\right]}
	\& Y_1
	\arrow{r}{\scriptscriptstyle \delta_Y} 
	\arrow{d}{\left[\begin{smallmatrix} \scriptscriptstyle \id \\ \scriptscriptstyle 0 \end{smallmatrix}\right]}
	\& Y_0
	\arrow{d}{\scriptscriptstyle \id}
	\\
	Cf\arrow[two heads]{d}{\scriptscriptstyle  p} 
	\& \cdots \arrow{r}{ \left[\begin{smallmatrix}\scriptscriptstyle \delta_Y &\scriptscriptstyle f\\ \scriptscriptstyle 0& \scriptscriptstyle -\delta_X \end{smallmatrix}\right]} 
	\& Y_3\oplus X_2 
	\arrow{d}{\left[\begin{smallmatrix}\scriptscriptstyle 0 & \scriptscriptstyle \id \end{smallmatrix}\right]}
	\arrow{r}{ \left[\begin{smallmatrix}\scriptscriptstyle \delta_Y &\scriptscriptstyle f \\ \scriptscriptstyle 0& \scriptscriptstyle -\delta_X\end{smallmatrix}\right]} 
	\& Y_2\oplus X_1 
	\arrow{d}{\left[\begin{smallmatrix}\scriptscriptstyle 0 & \scriptscriptstyle \id \end{smallmatrix}\right]}
	\arrow{r}{ \left[\begin{smallmatrix}\scriptscriptstyle \delta_Y &\scriptscriptstyle f \\ \scriptscriptstyle 0& \scriptscriptstyle -\delta_X\end{smallmatrix}\right]} 
	\& Y_1\oplus X_0 
	\arrow{d}{\left[\begin{smallmatrix}\scriptscriptstyle 0 & \scriptscriptstyle \id \end{smallmatrix}\right]}
	\arrow{r}{\left[\begin{smallmatrix} \scriptscriptstyle \delta_Y &\scriptscriptstyle f \end{smallmatrix}\right] } 
	\& Y_0\arrow{d}{\scriptscriptstyle 0}
	\\
	SX 
	\& \cdots\arrow{r}{\scriptscriptstyle -\delta_X} 
	\& X_2\arrow{r}{\scriptscriptstyle -\delta_X} 
	\& X_1\arrow{r}{\scriptscriptstyle -\delta_X} 
	\& X_0\arrow{r}{} 
	\& 0
	\end{tikzcd}
	\]
	The cofibration $i$ and the fibration $p$ form an exact sequence, called the \textbf{cofiber sequence} of $f$:
	\[
	\begin{tikzcd}
	0\arrow{r} 
	& Y\arrow[hook]{r}{i} 
	& Cf\arrow[two heads]{r}{p} 
	& SX\arrow{r} 
	& 0
	\end{tikzcd}
	\]
	
	Consider two maps of chain complexes $f\colon X\to Y$ and $g\colon W\to Z$. 
	Each of them leads to a cofiber sequence.
	A natural transformation between these exact sequences is by definition
	a triple of morphisms of chain complexes
	$S\alpha\colon SX\to SW$, $\beta\colon Y\to Z$ and $\gamma\colon Cf\to Cg$ which make the following diagram commute:
	\[\begin{tikzcd}[row sep=13pt]
	0\arrow{r} 
	& Y\arrow[hook]{r}{i} \ar{d}{\beta}
	& Cf\arrow[two heads]{r}{p} \ar{d}{\gamma}
	& SX\arrow{r} \ar{d}{S\alpha}
	& 0
	\\
	0\arrow{r} 
	& Z\arrow[hook]{r}{i} 
	& Cg\arrow[two heads]{r}{p} 
	& SW\arrow{r} 
	& 0
	\end{tikzcd}
	\]
	Commutativity of this diagram has two consequences. 
	First, $\gamma$ is of the form:
	\[
	\gamma_n=
	\begin{cases} 
	\beta_0\colon Y_0\to Z_0 &\text{ if } n=0
	\\
	\left[\begin{smallmatrix}
	\beta_{n} & h_{n-1} \\ 
	0 &\alpha_{n-1}
	\end{smallmatrix}\right]
	\colon Y_n\oplus X_{n-1}\to Z_n\oplus W_{n-1}&\text{ if } n>0
	\end{cases}
	\]
	Second, the sequence of linear functions $h=\{h_n\colon X_n\to Z_{n+1}\}_{n\geq 0}$ satisfies the equation $\beta f-g\alpha=\delta_Z h+h\delta_X$, which means that $h$ is a homotopy between $\beta f$ and $g\alpha$. 
	It follows that the set of natural transformations between the two cofiber sequences is in bijection with the set of triples consisting of  morphisms $\alpha\colon X\to W$ and $\beta\colon Y\to Z$, and a homotopy $h$ between $\beta f$ and $g\alpha$. We illustrate such a triple in form of a diagram:
	\[
	\begin{tikzcd}[row sep=13pt]
	X \ar[r, "f"] \ar[d, "\alpha"'] \ar[Rightarrow]{dr}{h} 
	& Y \ar{d}{\beta} 
	\\
	W \ar{r}[description]{g}
	& Z
	\end{tikzcd}
	\]
	The symbol $C(\alpha,\beta,h)\colon Cf\to Cg$ denotes 
	the morphism $\gamma \colon Cf\to Cg$, corresponding to this triple $(\alpha,\beta,h)$.
	
	In the case $h=0$, such  diagrams corresponds to a commutative squares:
	\[
	\begin{tikzcd}[row sep=13pt]
	X\ar{r}{f}\ar{d}[swap]{\alpha} \ar[Rightarrow]{dr}{0}
	&Y\ar{d}{\beta} 
	\\
	W\ar{r}[description]{g} 
	&Z
	\end{tikzcd}
	\qquad = \qquad 
	\begin{tikzcd}[row sep=13pt]
	X\ar{r}{f}\ar{d}[swap]{\alpha} 
	& Y\ar{d}{\beta}
	\\
	W\ar{r}{g} 
	& Z
	\end{tikzcd}
	\]
	In this case, the corresponding morphism between the cofibers is denoted simply by $C(\alpha,\beta)\colon Cf\to Cg$.
	
	In the case the differentials $\delta_Z$ and $\delta_X$ are trivial (in all degrees), the following implication holds (homotopy commutative square is commutative):
	\[
	\begin{tikzcd}[row sep=13pt]
	X\ar{r}{f}\ar{d}[swap]{\alpha}\ar[Rightarrow]{dr}{h} 
	& Y\ar{d}{\beta} 
	\\
	W\ar{r}[description]{g} 
	& Z
	\end{tikzcd}
	\quad \text{implies} \quad
	\begin{tikzcd}[row sep=13pt]
	X\ar{r}{f}\ar{d}[swap]{\alpha}\ar[Rightarrow]{dr}{0} 
	& Y\ar{d}{\beta}
	\\
	W\ar{r}[description]{g} 
	& Z
	\end{tikzcd}
	\]
\end{point}
\begin{point}{\em Comparison morphism.}\label{adgsdgfhdfghn}
	Let $f\colon X\to Y$ be a morphism of chain complexes. Consider the quotient morphism $\text{q}\colon Y\to Y/f(X)$ and define the \textbf{comparison morphism} $Cf\to Y/f(X)$ to be:
	\[\begin{tikzcd}[column sep=3.5em, row sep=13pt, ampersand replacement=\&]
	Cf\arrow[two heads]{d}
	\& 
	\cdots \arrow{r}{ \left[\begin{smallmatrix}\scriptscriptstyle \delta_Y &\scriptscriptstyle f\\ \scriptscriptstyle 0& \scriptscriptstyle -\delta_X \end{smallmatrix}\right]} 
	\& 
	Y_2\oplus X_1 
	\arrow{d}{\left[\begin{smallmatrix}\scriptscriptstyle  q & \scriptscriptstyle 0 \end{smallmatrix}\right]}
	\arrow{r}{ \left[\begin{smallmatrix}\scriptscriptstyle \delta_Y &\scriptscriptstyle f \\ \scriptscriptstyle 0& \scriptscriptstyle -\delta_X\end{smallmatrix}\right]} 
	\& 
	Y_1\oplus X_0 
	\arrow{d}{\left[\begin{smallmatrix}\scriptscriptstyle q& \scriptscriptstyle 0 \end{smallmatrix}\right]}
	\arrow{r}{\left[\begin{smallmatrix} \scriptscriptstyle \delta_Y &\scriptscriptstyle f \end{smallmatrix}\right] } 
	\& 
	Y_0\arrow{d}{\scriptscriptstyle q}
	\\
	Y/f(X)	\&\cdots\ar{r}{\delta}    \& (Y/f(X))_2\ar{r}{\delta}  \& (Y/f(X))_1\ar{r}{\delta} \&(Y/f(X))_0
	\end{tikzcd}\]
	If $f$ is a monomorphism, then the comparison morphism $Cf\twoheadrightarrow Y/f(X)$ is a weak equivalence. 
\end{point}

\begin{point}{\em Factorisation.}
	The complex $C\id_X$ is also denoted by $CX$ and called the {\bf cone} on $X$. 
	Explicitly, the cofibration
	$i\colon X\hookrightarrow CX$ is given by:	
	\[
	\begin{tikzcd}[column sep=3.5em, row sep=15pt, ampersand replacement=\&]
	X\arrow[hook]{d}{\scriptscriptstyle  i} 
	\& [-1cm]{\cdots}\ar[r, "\delta_{X}"]
	\& X_3\arrow{r}{\scriptscriptstyle \delta_X} 
	\arrow{d}{\left[\begin{smallmatrix} \scriptscriptstyle \id \\ \scriptscriptstyle 0\end{smallmatrix}\right]}
	\& X_2\arrow{r}{\scriptscriptstyle \delta_X}
	\arrow{d}{\left[\begin{smallmatrix} \scriptscriptstyle \id \\ \scriptscriptstyle 0\end{smallmatrix}\right]}
	\& X_1
	\arrow{r}{\scriptscriptstyle \delta_X} 
	\arrow{d}{\left[\begin{smallmatrix} \scriptscriptstyle \id \\ \scriptscriptstyle 0\end{smallmatrix}\right]}
	\& X_0
	\arrow{d}{\scriptscriptstyle \id}
	\\
	CX \&{\cdots}  \arrow{r}{ \left[\begin{smallmatrix}\scriptscriptstyle \delta_X &\scriptscriptstyle \id \\ \scriptscriptstyle 0 & \scriptscriptstyle -\delta_X\end{smallmatrix}\right]} \& X_3\oplus X_2 
	\arrow{r}{ \left[\begin{smallmatrix}\scriptscriptstyle \delta_X &\scriptscriptstyle \id \\ \scriptscriptstyle 0 & \scriptscriptstyle -\delta_X\end{smallmatrix}\right]} 
	\& X_2\oplus X_1 
	\arrow{r}{ \left[\begin{smallmatrix}\scriptscriptstyle \delta_X &\scriptscriptstyle \id \\ \scriptscriptstyle 0 & \scriptscriptstyle -\delta_X\end{smallmatrix}\right]} 
	\& X_1\oplus X_0 
	\arrow{r}{\left[\begin{smallmatrix} \scriptscriptstyle \delta_X &\scriptscriptstyle \id \end{smallmatrix}\right] } 
	\& X_0
	\end{tikzcd}
	\]
	Note that $HCX=0$.
	
	The complex $S^{-1}CX$ is also denoted by $PX$ and called the {\bf path complex} on $X$. 
	We also use the symbol $p\colon PX\to X$ to denote the fibration given by the desuspension  $S^{-1}p\colon S^{-1}CX\to S^{-1}SX=X$. 
	Explicitly:
	\[
	\begin{tikzcd}[column sep=3.5em, row sep=15pt, ampersand replacement=\&]
	PX\arrow[two heads]{d}{\scriptscriptstyle  p} 
	\& [-1cm]\cdots \arrow{r}{ \left[\begin{smallmatrix}\scriptscriptstyle -\delta_X & \scriptscriptstyle \id \\ \scriptscriptstyle 0 & \scriptscriptstyle \delta_X\end{smallmatrix}\right]} 
	\& X_4\oplus X_3 
	\arrow[two heads]{d}{\left[\begin{smallmatrix}\scriptscriptstyle 0 & \scriptscriptstyle \id \end{smallmatrix}\right]}
	\arrow{r}{ \left[\begin{smallmatrix}\scriptscriptstyle -\delta_X & \scriptscriptstyle \id \\ \scriptscriptstyle 0 & \scriptscriptstyle \delta_X\end{smallmatrix}\right]} 
	\& X_3\oplus X_2 
	\arrow[two heads]{d}{\left[\begin{smallmatrix}\scriptscriptstyle 0 & \scriptscriptstyle \id \end{smallmatrix}\right]}
	\arrow{r}{ \left[\begin{smallmatrix}\scriptscriptstyle -\delta_X &\scriptscriptstyle \id \\ \scriptscriptstyle 0 & \scriptscriptstyle \delta_X\end{smallmatrix}\right]} 
	\& X_2\oplus X_1 
	\arrow[two heads]{d}{\left[\begin{smallmatrix}\scriptscriptstyle 0 & \scriptscriptstyle \id \end{smallmatrix}\right]}
	\arrow{r}{\left[\begin{smallmatrix} \scriptscriptstyle - \delta_X & \scriptscriptstyle1 \end{smallmatrix}\right] } 
	\& X_1\arrow[two heads]{d}{\scriptscriptstyle \delta_X}
	\\
	X 
	\& \cdots\arrow{r}{\scriptscriptstyle \delta_X} 
	\& X_3\arrow{r}{\scriptscriptstyle \delta_X} 
	\& X_2\arrow{r}{\scriptscriptstyle \delta_X} 
	\& X_1\arrow{r}{ \delta_X} 
	\& X_0
	\end{tikzcd}
	\]
	Note that $HPX=0$.
	
	Since $HPX=0=HCX$, the fibration $p\colon PX\twoheadrightarrow X$ and the cofibration $i\colon X\hookrightarrow CX$ fit into the following factorisations of the morphisms $0\to X\to 0$:
	\[
	\begin{tikzcd}[row sep=2pt]
	& PX\arrow[two heads,bend left=10]{rd}{p} 
	& 
	& CX\arrow[two heads,bend left=10]{rd}{\sim}
	\\
	0\arrow[bend left=10,hook]{ru}{\sim} \arrow{rr} 
	& 
	& X\arrow{rr}\arrow[hook,bend left=10]{ru}{i} 
	& 
	& 0
	\end{tikzcd}
	\]
	
	These morphisms $i$ and $p$ can be used to construct explicit  factorisations of arbitrary morphisms in $\Ch$, whose existence is guaranteed by axiom \ref{MC5}: any $g\colon X\to Y$ fits into a commutative diagram:
	\[
	\begin{tikzcd}[ampersand replacement=\&,row sep=3pt]
	\& X\oplus PY 
	\arrow[two heads,bend left=10]{rd}{\left[\begin{smallmatrix}  g & p \end{smallmatrix}\right]}
	\\
	X \arrow{rr}[description]{g}
	\ar[hook,bend left=10]{ru}{\left[\begin{smallmatrix} \id \\ 0 \end{smallmatrix}\right]}[swap]{\sim} 
	\ar[hook',bend right=10]{rd}[swap]{\left[\begin{smallmatrix}i\\g \end{smallmatrix}\right]}
	\& 
	\& Y
	\\
	\& CX\oplus Y 
	\ar[two heads,bend right=10]{ru}[swap]{\left[\begin{smallmatrix}0& 1 \end{smallmatrix}\right]}{\sim} 
	\end{tikzcd}
	\]
	These factorisations are natural, however in general not minimal (see Definition~\ref{def_minimal_morphism}).  
	To obtain minimal factorisations we cannot perform natural constructions and we will be forced to make some choices. 
\end{point}

\begin{point}\label{point_graded_vect}
	{\em Graded vector spaces.} 
	A (non-negatively) graded $K$-vector space is by definition a sequence of $K$-vector spaces $V=\{V_n\}_{n\in{\mathbf N}}$. 
	Such a graded vector space is concentrated in degree $k$ if $V_n=0$ for all $n\not= k$. 
	Graded vector spaces concentrated in degree $0$ are identified   with vector spaces. 
	
	Let $V=\{V_n\}_{n\in{\mathbf N}}$ be a graded $K$-vector space.
	The same symbol $V$ is also used to denote the chain complex $\{0\colon V_{n+1}\to V_n\}_{n\in{\mathbf N}}$ with the trivial differentials. 
	In this case, $HV=V$ and hence any weak equivalence $\phi\colon V\to V$ is an isomorphism. 
	In fact, an arbitrary chain complex $X$ is minimal (see Definition~\ref{def_minimal_object}) if and only if all its differentials are trivial. 
	More generally any cofibration $\alpha\colon X\hookrightarrow Y$ for which the chain complex $Y/\alpha(X)$ has all trivial differentials satisfies the following minimality condition: 
	{\em any weak equivalence $\phi\colon Y\to Y$ for which $\alpha \phi  =\alpha$ is an isomorphism.} 
	To see this consider a commutative diagram with exact rows:
	\[
	\begin{tikzcd}[row sep=10pt]
	0\ar{r} 
	& X\ar[hook]{r}{\alpha}\ar{d}[swap]{\id} 
	& Y\ar[two heads]{r}\ar{d}{\phi} 
	& Y/\alpha(X)\ar{r} \ar{d}
	& 0
	\\
	0\ar{r} 
	& X\ar[hook]{r}{\alpha} 
	& Y\ar[two heads]{r} 
	& Y/\alpha(X)\ar{r} 
	& 0
	\end{tikzcd}
	\]
	Using the long sequences of homologies for each row, we can conclude the morphism $Y/\alpha(X)\to Y/\alpha(X)$ is a weak equivalence and hence an isomorphism as $Y/\alpha(X)$ is assumed to have all differentials trivial.
	We can then use the exactness of the rows to get that $\phi$ is also an isomorphism.
	
	To denote the $n$-fold suspension of $K$ we use the symbol $\Sp$.
	Explicitly, $\Sp$ is the chain complex concentrated in degree $n$ such that $(\Sp)_n=K$. 
	For example $\Sp[0]=K$. 
	The complex $\Sp$ is called the $n$-th {\bf sphere}. 
	The cone $C\Sp$ is denoted by $\D[n+1]$ and called the $(n+1)$-st {\bf disk}. 
	Explicitly:
	\[
	(\D[n+1])_k=
	\begin{cases}
	K &\text{ if } k=n\text{ or } k=n+1
	\\
	0 & \text{ otherwise}
	\end{cases},
	\quad
	\delta_k=
	\begin{cases}
	\id &\text{ if } k=n 
	\\
	0 & \text{ otherwise}
	\end{cases}
	\]
\end{point}

\begin{point}{\em Standard decomposition and minimal representative.}\label{point_standard_dec_min_rep_ch}
	Let $X$ be a chain complex.
	Consider the morphisms $p\colon CBX\twoheadrightarrow SBX\leftarrow X:\!\delta_{X}$ (see \ref{point_cofiber}).  
	Axiom \ref{MC4} guarantees existence of a morphism $\phi\colon CBX\to X$ making the following diagram commutative:
	\[
	\begin{tikzcd}[row sep=13pt]
	0\ar[hook']{d}[swap]{\sim} \ar{r} 
	& X\ar[two heads]{d}{\delta} 
	\\
	CBX\ar[two heads]{r}[description]{p}\ar[dashed]{ru}[description]{\phi} 
	& SBX
	\end{tikzcd}
	\]
	
	The restriction of any such $\phi$  to $i\colon BX\hookrightarrow CBX$ is the standard inclusion $BX\hookrightarrow  ZX\hookrightarrow X$. 
	This can be seen by looking at the long exact sequences of homologies applied to the rows in the following commutative diagram:
	\[
	\begin{tikzcd}[row sep=13pt]
	0\ar{r} 
	& BX\ar[hook]{r}{i}\ar{d} 
	& CBX\ar[two heads]{r}{p}\ar{d}{\phi} 
	& SBX\ar{r} \ar{d}{\id}
	& 0
	\\
	0\ar{r} 
	& ZX\ar[hook]{r} 
	& X\ar[two heads]{r}{\delta} 
	& SBX\ar{r} 
	& 0
	\end{tikzcd}
	\]
	The morphism $\phi$ leads therefore to a pushout square (in particular $\phi$ is a cofibration):
	\[
	\begin{tikzcd}[row sep=13pt]
	BX \ar[hook]{r}{i}\ar[hook']{d} 
	& CBX \ar[hook]{d}{\phi}
	\\
	ZX\ar[hook]{r} 
	& X
	\end{tikzcd}
	\]
	Since considered coefficients are in a field and all the differentials in $BX$, $ZX$ and $HX$ are trivial, there is a morphism $s\colon HX\to ZX$, whose composition with the quotient $ZX\twoheadrightarrow HX$ is $\id_{HX}$. 
	For any such $s$, the morphism  $\begin{bmatrix}i&s\end{bmatrix}\colon BX\oplus HX\to ZX$ is an isomorphism.  
	It follows that so is the morphism $\begin{bmatrix}\phi&s\end{bmatrix}\colon CBX\oplus HX\to X$, 
	where the symbol $s$ also denotes the composition of $s\colon HX\to ZX$ and the inclusion $ZX\hookrightarrow X$.
	We call $CBX\oplus HX$ the {\bf standard decomposition} of the chain complex $X$. 
	Since $CBX$ has trivial homology, the morphism $s\colon HX\to X$ is a weak equivalence and hence $HX$ is the minimal representative (see Definition~\ref{def_minimal_object}) of $X$.  
\end{point}

\begin{point}{\em Minimal factorisations.}
	Let $g\colon X\to Y$ be a morphism of chain complexes.  
	To construct its minimal factorisation (see Definition~\ref{def_minimal_morphism}) we perform the following steps:
	
	\begin{enumerate}
		\item Take the kernel $j\colon W\hookrightarrow X$ of $g\colon X\to Y$;
		\item Choose an isomorphism $W\xrightarrow{\simeq} CBW\oplus HW$ (see \ref{point_standard_dec_min_rep_ch});
		\item Consider the composition:
		\[
		\begin{tikzcd}[ampersand replacement=\&, column sep=3.3em]
		W\ar{r}{\simeq}\ar[hook',bend right=13]{rr}[description]{\alpha} 
		\& CBW\oplus HW\ar[hook]{r}{\left[\begin{smallmatrix} \id & 0 \\ 0 & i\end{smallmatrix}\right]} 
		\& CBW\oplus CHW
		\end{tikzcd}
		\]
		\item Use axiom \ref{MC4} to construct a morphism $\phi\colon X\to CBW\oplus CHW$, which fits into the
		following commutative diagram: 
		\[
		\begin{tikzcd}[row sep=13pt]
		W \ar[hook']{d}[swap]{j}\ar[hook]{r}{\alpha}
		& CBW\oplus CHW\ar[two heads]{d}{\sim}
		\\
		X \ar{r}\ar[dashed]{ru}[description]{\phi}
		& 0
		\end{tikzcd}
		\]
		\item The morphism
		$\left[\begin{smallmatrix} \phi\\ g\end{smallmatrix}\right]\colon X\to \left(CBW\oplus CHW\right)\oplus Y$ is then a cofibration.
	\end{enumerate}
	We are now ready to state:
\end{point}

\begin{proposition}
	The following factorisation is  minimal:
	\[
	\begin{tikzcd}[ampersand replacement=\&,row sep=2pt]
	\& \left(CBW\oplus CHW\right)\oplus Y 
	\arrow[two heads,bend left=10]{rd}{\left[\begin{smallmatrix} 0 & \id\end{smallmatrix}\right]}[swap]{\sim} 
	\\
	X \arrow{rr}[description]{g}
	\ar[hook,bend left=10]{ru}{\left[\begin{smallmatrix} \phi \\ g \end{smallmatrix}\right]}
	\& 
	\& Y
	\\
	\end{tikzcd}
	\]
\end{proposition}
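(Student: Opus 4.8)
The plan is to verify the three requirements of Definition~\ref{def_minimal_morphism}: that $\left[\begin{smallmatrix} 0 & \id\end{smallmatrix}\right]$ is a fibration and a weak equivalence, that $\left[\begin{smallmatrix} \phi \\ g\end{smallmatrix}\right]$ is a cofibration, and that the factorisation is minimal. Abbreviate $D:=CBW\oplus CHW$. The projection $\left[\begin{smallmatrix} 0 & \id\end{smallmatrix}\right]\colon D\oplus Y\to Y$ is an epimorphism in each degree, hence a fibration; and since $D$ is a sum of cones it has trivial homology, so $H(D\oplus Y)=HY$ and the projection is a weak equivalence. That $\left[\begin{smallmatrix} \phi \\ g\end{smallmatrix}\right]$ is a cofibration (degreewise injective) was already noted in the construction: if $x$ lies in its kernel then $g(x)=0$, so $x$ belongs to $W$, whence $\phi(x)=\phi j(x)=\alpha(x)$ vanishes and $x=0$ because $\alpha$ is a monomorphism.

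The substance of the proof is minimality, and my plan is to reduce it to a statement about a self-map of $D$ alone. Let $\psi\colon D\oplus Y\to D\oplus Y$ be any morphism with $\psi\left[\begin{smallmatrix} \phi \\ g\end{smallmatrix}\right]=\left[\begin{smallmatrix} \phi \\ g\end{smallmatrix}\right]$ and $\left[\begin{smallmatrix} 0 & \id\end{smallmatrix}\right]\psi=\left[\begin{smallmatrix} 0 & \id\end{smallmatrix}\right]$, and write it as a block matrix $\left[\begin{smallmatrix}\psi_{DD} & \psi_{DY}\\ \psi_{YD}&\psi_{YY}\end{smallmatrix}\right]$. The second equation forces $\psi_{YD}=0$ and $\psi_{YY}=\id$, so $\psi$ is upper triangular with identity lower-right block; consequently $\psi$ is an isomorphism if and only if $\psi_{DD}\colon D\to D$ is one. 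It therefore suffices to prove that $\psi_{DD}$ is an isomorphism.

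To this end I would read off the relation satisfied by $\psi_{DD}$. The $D$-component of the first equation is $\psi_{DD}\phi+\psi_{DY}g=\phi$; precomposing with the kernel inclusion $j\colon W\to X$ and using $gj=0$ and $\phi j=\alpha$ gives $\psi_{DD}\alpha=\alpha$. Moreover $\psi_{DD}$ is automatically a weak equivalence, since $HD=0$ makes $H\psi_{DD}$ an isomorphism $0\to 0$ for free. Finally I would invoke the relative minimality criterion of~\ref{point_graded_vect} for the cofibration $\alpha\colon W\to D$: factoring $\alpha$ as the isomorphism $W\xrightarrow{\simeq}CBW\oplus HW$ (see~\ref{point_standard_dec_min_rep_ch}) followed by $\left[\begin{smallmatrix}\id&0\\0&i\end{smallmatrix}\right]\colon CBW\oplus HW\to CBW\oplus CHW$ shows $D/\alpha(W)\cong CHW/i(HW)\cong S(HW)$, the last isomorphism being the cofiber sequence of $\id_{HW}$ (see~\ref{point_cofiber}). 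Since $HW$ has trivial differentials, so does $S(HW)$, and the criterion forces the weak equivalence $\psi_{DD}$, which fixes $\alpha$, to be an isomorphism; hence $\psi$ is an isomorphism.

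The main obstacle is conceptual rather than computational: one must recognise that the two commutativity constraints collapse the minimality of the entire factorisation to the minimality of the endomorphism $\psi_{DD}$ of the contractible summand $D$, and that the standard decomposition of $W$ was chosen precisely so that $D/\alpha(W)\cong S(HW)$ has trivial differentials. The homological hypothesis of the criterion in~\ref{point_graded_vect} then costs nothing, as $HD=0$.
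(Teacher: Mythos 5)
Your proof is correct and follows essentially the same route as the paper's: write $\psi$ in block form, use the bottom triangle to get $\psi_{21}=0$, $\psi_{22}=\id$, restrict the top triangle to the kernel $W$ to get $\psi_{11}\alpha=\alpha$, identify the quotient $(CBW\oplus CHW)/\alpha(W)\cong SHW$ as having trivial differentials, and invoke the criterion of~\ref{point_graded_vect}. Your added remarks (explicitly checking the fibration/cofibration conditions and noting that $\psi_{11}$ is automatically a weak equivalence since $H(CBW\oplus CHW)=0$) are details the paper leaves implicit, not a different argument.
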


\begin{proof}
	Let 
	$\psi=
	\left[\begin{smallmatrix} 
	\psi_{11} 
	& \psi_{12} 
	\\
	\psi_{21} 
	& \psi_{22}
	\end{smallmatrix}\right]
	\colon 
	\left(CBW\oplus CHW\right)\oplus Y
	\to 
	\left(CBW\oplus CHW\right)\oplus Y$ be a morphism
	making the following diagram commutative:
	\[
	\begin{tikzcd}[ampersand replacement=\&,row sep=20pt,column sep=2em]
	X \arrow[hook]{r}[description]{\left[\begin{smallmatrix} \phi\\g \end{smallmatrix}\right]} 
	\arrow[hook']{d}[swap]{\left[\begin{smallmatrix} \phi\\g \end{smallmatrix}\right]}
	\& \left(CBW\oplus CHW\right)\oplus Y 
	\arrow[two heads]{d}{\left[\begin{smallmatrix} 0 & \id \end{smallmatrix}\right]}[swap]{\sim} 
	\\
	\left(CBW\oplus CHW\right)\oplus Y 
	\arrow[two heads]{r}{\left[\begin{smallmatrix} 0 & \id\end{smallmatrix}\right]}[swap]{\sim} 
	\ar{ur}[description]{\psi}
	\& Y
	\end{tikzcd}
	\]
	Commutativity of the bottom triangle implies $\psi_{21}=0$ and $\psi_{22}=\id$.
	Since $W$ is the kernel of $g$, commutativity of the top triangle implies commutativity of:
	\[
	\begin{tikzcd}[row sep=13pt]
	W\ar[hook]{r}{\alpha}\ar[hook']{d}[swap]{\alpha}
	& CBW\oplus CHW
	\\
	CBW\oplus CHW\ar[bend right=10]{ru}[swap]{\psi_{11}}
	\end{tikzcd}
	\]
	The quotient $(CBW\oplus CHW)/\alpha(W)=SHW$ has all   differentials trivial. The morphism $\psi_{11}\colon CBW\oplus CHW\to CBW\oplus CHW$ is therefore an isomorphism (see \ref{point_graded_vect}). 
	It follows that so is $\psi=\left[\begin{smallmatrix} \psi_{11} &\psi_{12} \\ 0 & \id \end{smallmatrix}\right]$
\end{proof}	

We just proved that $\ch$ satisfies the minimal factorisation axiom.
By Theorem~\ref{sdvgsdfhsfgn}, it follows that $\tame$ also satisfies the minimal factorisation axiom, and thus any tame parametrised chain complex admits a minimal cover.
In Section~\ref{section_cofibrant_objects_tame}, we provide a characterisation of such minimal covers.

\section{Cofibrations in $\tame$}
\label{section_cofibrant_objects_tame}

In this section we discuss cofibrations in the model category $\tame$ as described in Theorem~\ref{tame_is_model_cat}.
According to Proposition~\ref{char_cofibrant}, an object $X$ in $\tame$ is cofibrant if and only if the transition $X^{s<t}\colon X^{s}\to X^{t}$ is a monomorphism (a cofibration in $\ch$) for every $s< t$ in $\posr$. 
This implies that if $Y$ is cofibrant, and $f\colon X\to Y$ is 
a monomorphism in $\tame$, then $X$ is also cofibrant.

For example, the following objects are cofibrant. 
They are parametrised by a natural number $n$ and an element in $\Omega:=\{(s,e)\in [0,\infty)\times [0,\infty]\ |\ s\leq e\}$, where $\left[0,\infty\right]$ is the poset of non-negative reals plus $\infty$.

\begin{definition}\label{def_interval_sphere}
	The Kan extensions (see \ref{point_kan_extension}) given by the data described in the following table are called {\bf interval spheres}:
	\begin{center}
		\begin{tabular}{c|c|c|c}
			Index & $n$, $s<e=\infty$ &$n$,  $s=e<\infty$ & $n$, $s<e<\infty$\\
			\hline
			Name  & $\intS{s,\infty}$ & $\intS{s,s}$ & $\intS{s,e}$\\
			\hline
			$k$ & $0$ & $0$ & $1$\\
			\hline
			Functor $[k]\to \ch$ & $\Sp[n]$ & $\D[n+1]$ & $i\colon \Sp[n]\hookrightarrow \D[n+1]$\\
			\hline
			Inclusion $[k]\subset\posr$ & $s$ & $s$ & $s<e$
		\end{tabular}
	\end{center}
\end{definition}
	
For example, $\intS[2]{5, \infty}\colon \posr\to \ch$ is a functor whose value at $t<5$ is $0$, and at $5\leq t$ is $\Sp[2]$.
Similarly, $\intS[2]{5, 5}\colon \posr\to \ch$ has value $0$ if $t<5$, and $\D[3]$ if $5\leq t$. 
The functor $\intS[2]{5, 7}\colon \posr\to \ch$
has three values: $0$ if $t<5$, $\Sp[2]$ if $5\leq t<7$, and $\D[3]$ if $7\leq t$. 
The transition morphisms in $\intS[2]{5, 7}$ are either the identities, or the inclusion $0\hookrightarrow \Sp[2]$ or the inclusion $i\colon \Sp[2]\hookrightarrow \D[3]$.

A morphism $\intS{s, e}\to 0$ is a weak equivalence if and only if $s=e$. 
Thus, interval spheres of type $\intS{s, s}$ are the only interval spheres for which the chain complex  $\intS{s, s}^t$ has trivial homology for every parameter $t$ in $\posr$.

The main result of this section is the structure theorem (compare with \cite{framed_morse, structural_filtered, related_work_decomposition}):

\begin{theorem}\thlabel{dec_theorem}
	\begin{enumerate}
		\item[(1)] Any cofibrant object in $\tame$ is isomorphic to
		a direct sum $\oplus_{i=1}^{l} \intS[n_{i}]{s_{i},e_{i}}$, where $l$ could possibly be $0$.
		\item[(2)] If $\oplus_{i=1}^{l} \intS[n_{i}]{s_{i},e_{i}} \cong \oplus_{j=1}^{l'} \intS[n'_{j}]{s'_{j},e'_{j}}$, then $l=l'$ and there is a permutation $\sigma$ of the set $\{1,\ldots, l\}$ such that $n_{i}=n'_{\sigma(i)}$,  $s_{i}=s'_{\sigma(i)}$, and $e_{i}=e'_{\sigma(i)}$ for any $i$.
	\end{enumerate}
\end{theorem}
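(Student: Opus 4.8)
The plan is to reduce part~(1) to a normal-form statement about filtered compact chain complexes, and to deduce part~(2) from a handful of additive, functorially defined invariants.

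For existence I would first combine Proposition~\ref{char_cofibrant} with the Kan extension description in~\ref{point_kan_extension}. Since every object of $\ch$ is cofibrant, a cofibrant $X$ in $\tame$ is exactly one whose transition maps are degreewise monomorphisms; choosing a discretising sequence $0=\tau_0<\cdots<\tau_k$ realises $X$ as the Kan extension of $X^{\tau_0}\to\cdots\to X^{\tau_k}$, whose transitions are degreewise monomorphisms of compact chain complexes. Up to isomorphism I may treat these as inclusions, so $X$ is encoded by a single compact chain complex $C:=X^{\tau_k}$ with an increasing filtration by subcomplexes $F_0\subseteq\cdots\subseteq F_k=C$; write $\ell(c)$ for the birth of $c$, the smallest $\tau_a$ with $c\in F_a$. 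The goal then becomes to produce a homogeneous $K$-basis of $C$ compatible with the filtration (its vectors of birth $\le\tau_a$ spanning $F_a$) and matched by $\delta$, meaning each basis vector $y$ satisfies either $\delta y=0$ or $\delta y=x$ for a second basis vector $x$, distinct matched pairs using distinct $x$. Such a basis yields the decomposition at once: a matched pair $(y,x=\delta y)$ with $\deg x=:n$, $\deg y=n+1$, and $\ell(x)\le\ell(y)$ (as $x=\delta y\in F_{\ell(y)}$), spans a filtered subcomplex isomorphic to $\intS[n]{\ell(x),\ell(y)}$ (the disk type $\intS[n]{s,s}$ when $\ell(x)=\ell(y)$; see Definition~\ref{def_interval_sphere}), while an unmatched $y$ with $\delta y=0$ in degree $n$ spans $\intS[n]{\ell(y),\infty}$.

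To build such a matched filtered basis I would induct on $\dim_K\bigoplus_n C_n$, splitting off one interval sphere at each step. Let $N$ be the top nonzero degree. If some $y\in C_N$ has $\delta y\neq0$, choose such a $y$ of maximal birth $e=\ell(y)$ whose differential is not a combination of boundaries of elements born before $e$, set $x=\delta y$ and $s=\ell(x)\le e$, and split off $Kx\oplus Ky\cong\intS[N-1]{s,e}$; if instead $\delta$ vanishes on $C_N$, split off $Kx\cong\intS[N]{\ell(x),\infty}$ for a top-degree generator $x$ of maximal birth. In each case one must exhibit a \emph{filtration-compatible} complement whose remaining data is again a subcomplex, which is precisely the filtered Gaussian elimination underlying the persistent-homology reduction (equivalently the singular value decomposition of~\cite{related_work_decomposition}). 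I expect this complement choice — arranging over the field $K$ that no surviving generator keeps an $x$-component in its differential while the basis stays sorted by birth — to be the main obstacle; the maximality of $e$ and the field hypothesis are exactly what make it work.

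For uniqueness I would extract invariants depending only on the isomorphism type of $X$ and invert them to recover the multiset $\{(n_i,s_i,e_i)\}$. The degree-$n$ homology defines a tame $\posr$-persistence module $\{H_n(X^t)\}_t$ over $K$, whose barcode (read off from the ranks of its transition maps) assigns to each $[s,e)$ with $s<e\le\infty$ a multiplicity $\mu_n[s,e)$. A direct computation on a single $\intS[m]{s,e}$ shows $\{H_n(X^t)\}$ equals the interval module $K_{[s,e)}$ in degree $m$ when $s<e$ and is $0$ when $s=e$, so by additivity $\mu_n[s,e)$ counts exactly the summands with $n_i=n$, $s_i=s$, $e_i=e$ and $s<e$, recovering every non-diagonal summand with its full data. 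The diagonal (disk) summands $\intS[n]{e,e}$ are homologically invisible, so to recover them I would use the boundary module $\{B_n(X^t)\}_t$: its transitions are injective (a boundary that dies downstream is already zero upstream, since the chain maps are degreewise monomorphisms), hence $t\mapsto\dim B_n(X^t)$ is a non-decreasing step function whose jump $j_n(e)$ at a finite $e$ equals $\#\{i:n_i=n,\ e_i=e\}$. Subtracting the finite proper bars already accounted for gives the disk count $\#\{i:n_i=n,\ s_i=e_i=e\}=j_n(e)-\sum_{s<e}\mu_n[s,e)$, a finite sum by tameness. As all these numbers are isomorphism invariants, two isomorphic direct sums of interval spheres have identical multisets of summands, which gives $l=l'$ and the required permutation $\sigma$.
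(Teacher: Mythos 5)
Your route is genuinely different from the paper's and is, in outline, a viable one: it is essentially the filtered normal-form argument of \cite{framed_morse, structural_filtered, related_work_decomposition}, which the paper itself cites as alternative sources for this theorem. Your uniqueness argument is complete and in some ways more informative than the paper's one-line appeal to Krull--Schmidt and indecomposability of the $\intS{s,e}$: since $H_*\intS{s,s}=0$ while $H_m\intS[m]{s,e}$ is the interval module on $[s,e)$ for $s<e$, the barcode of $\{H_n(X^t)\}_t$ recovers all off-diagonal summands, and the jumps of the non-decreasing function $t\mapsto\dim B_n(X^t)$, minus the finite bars already accounted for, recover the diagonal ones; all of these are isomorphism invariants, and the multiplicities are determined by ranks via elementary inclusion--exclusion because both sides are already presented as sums of interval spheres, so there is no circularity.

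The existence half, however, stops exactly where the work is. The reduction (via Proposition~\ref{char_cofibrant} and the Kan-extension description) to a compact chain complex with a finite filtration by subcomplexes is correct, and you correctly identify that everything hinges on producing a filtration-compatible complement of $Kx\oplus Ky$ that is again a subcomplex --- but you then declare this ``the main obstacle'' and merely assert that maximality of $e$ and the field hypothesis make it work. As stated, your choices do not yet guarantee this: after picking $y$ of maximal birth $e$ with $\delta y\notin\delta(F_{<e}C_N)$ and setting $x=\delta y$, $s=\ell(x)$, one must still verify that $Kx$ admits a \emph{filtered} complement in $C_{N-1}$ containing the image under $\delta$ of the chosen complement of $Ky$; this needs, for instance, $x\notin F_{<s}C_{N-1}+\delta(F_{<e}C_N)$, and when that fails one has to re-choose $y$ modulo $F_{<e}C_N$ to lower $s$ (i.e., reduce the pivot). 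None of this bookkeeping is carried out, and it is the heart of the proof. The paper sidesteps it entirely: it selects $(n,e,s)$ by a minimality (rather than maximality) recipe so that Proposition~\ref{prop_cofibrations_from_interval_spheres} makes $I(x,y)\colon\intS{s,e}\to X$ a cofibration, and then uses the lifting axiom \ref{MC4} against the acyclic fibration $\intS{s,s}\twoheadrightarrow 0$ to manufacture a retraction $\psi$ with $\psi\circ I(x,y)=\mathrm{id}$, whence $X\cong\intS{s,e}\oplus\ker\psi$ with no basis manipulation at all. If you wish to keep your linear-algebra route, you must supply the filtered Gaussian elimination in full (as in \cite{related_work_decomposition}); as written, the central splitting step is announced rather than proved.
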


To prove Theorem \ref{dec_theorem}, we first need to characterise cofibrations in $\tame$ and explain how to enumerate morphisms out of $\intS{s,e}$. 
We start with cofibrations:

\begin{proposition}\label{char_cofibrations}
	For every morphism $g\colon X\to Y$ in $\tame$, the following statements are equivalent:
	\begin{enumerate}
		\item $g$ is a cofibration;
		\item $g^t\colon X^t\to Y^t$ is a monomorphism for every $t$ in $\posr$, and $Y/g(X)$ is cofibrant;
		\item $g^t\colon X^t\to Y^t$ is a monomorphism for every $t$ in $\posr$ and, for all $s<t$ in $\posr$, the following is a pullback square:
		\[
		\begin{tikzcd}[row sep=12pt]
		X^{s} \ar[r,swap, "X^{s<t}"'] 
		\ar[d, hook',swap, "g^{s}"]
		& X^{t} \ar[d, hook, "g^{t}"]
		\\ 
		Y^{s}  \ar[r, "Y^{s<t}"]
		& Y^{t}
		\end{tikzcd}
		\]
	\end{enumerate}
\end{proposition}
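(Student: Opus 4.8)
The plan is to prove the three conditions equivalent by treating (3) as the hub, establishing $(1)\Leftrightarrow(3)$ and $(2)\Leftrightarrow(3)$. Throughout I will exploit that $\ch$ is an abelian category, so that kernels, pullbacks and pushouts are computed degreewise and monomorphisms are exactly the degreewise injective chain maps. The only piece of homological algebra I rely on is the standard fact that in an abelian category a pushout square one of whose legs is a monomorphism is automatically \emph{bicartesian} (both a pushout and a pullback), the opposite parallel leg being then also a monomorphism.

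First I would prove $(1)\Rightarrow(3)$. That $g^{t}$ is a monomorphism for every $t$ is immediate from Proposition~\ref{char_cofibrant}.1. For the pullback condition, fix $s<t$ and choose a sequence discretising both $X$ and $Y$, refined so that $s$ and $t$ both occur in it; since the factorisation of \ref{point_factorisation} may be computed with respect to any discretising sequence, and $g$ being a cofibration means $\hat{g}$ is a degreewise monomorphism at every index of it, it suffices to verify the square is a pullback for two consecutive indices $\tau_{a-1}<\tau_a$, because the square for a general $s<t$ is a horizontal pasting of such squares and pullbacks paste. For consecutive indices the square with corner $Q^{\tau_a}=\colim(Y^{\tau_{a-1}}\leftarrow X^{\tau_{a-1}}\to X^{\tau_a})$ is bicartesian, its leg $g^{\tau_{a-1}}$ being monic, so $\bar{g}^{\tau_a}$ is monic; writing $g^{\tau_a}=\hat{g}^{\tau_a}\bar{g}^{\tau_a}$ and $Y^{\tau_{a-1}<\tau_a}=\hat{g}^{\tau_a}j$ with $\hat{g}^{\tau_a}$ monic, injectivity of $\hat{g}^{\tau_a}$ identifies $Y^{\tau_{a-1}}\times_{Y^{\tau_a}}X^{\tau_a}$ with $Y^{\tau_{a-1}}\times_{Q^{\tau_a}}X^{\tau_a}\cong X^{\tau_{a-1}}$, so the outer square is a pullback.

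Next I would handle $(2)\Leftrightarrow(3)$. The point is that $Y/g(X)$ is again tame, that every object of $\ch$ is cofibrant (the initial map $0\to Z$ is always a degreewise monomorphism), and hence by Proposition~\ref{char_cofibrant}.2 that cofibrancy of $Y/g(X)$ is equivalent to all its transitions $Y^{s}/g^{s}(X^{s})\to Y^{t}/g^{t}(X^{t})$ being monomorphisms. Comparing the short exact sequences $0\to X^{r}\to Y^{r}\to Y^{r}/g^{r}(X^{r})\to 0$ for $r=s,t$, a direct chase (equivalently the snake lemma) shows the transition on the quotient is injective precisely when the left square is a pullback: injectivity says every $y\in Y^{s}$ with $Y^{s<t}(y)\in g^{t}(X^{t})$ already lies in $g^{s}(X^{s})$, which is exactly the surjectivity onto the fibre product furnished by the pullback property, and conversely. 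This yields both implications at once.

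Finally, $(3)\Rightarrow(1)$ is a kernel computation: assuming all squares are pullbacks and all $g^{t}$ monic, I must show each $\hat{g}^{\tau_a}\colon Q^{\tau_a}\to Y^{\tau_a}$ is monic. Writing $Q^{\tau_a}$ as a cokernel $(Y^{\tau_{a-1}}\oplus X^{\tau_a})/R$ and a kernel element of $\hat{g}^{\tau_a}$ as a class $[(y,x)]$ with $Y^{\tau_{a-1}<\tau_a}(y)+g^{\tau_a}(x)=0$, the pullback property at $(\tau_{a-1},\tau_a)$ produces $x''\in X^{\tau_{a-1}}$ with $g^{\tau_{a-1}}(x'')=y$ and $X^{\tau_{a-1}<\tau_a}(x'')=-x$, exhibiting $(y,x)$ as a relation in $R$, so the class vanishes. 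I expect the main obstacle to be bookkeeping rather than conceptual: faithfully translating the abstract definition of cofibration in $\tamem$, phrased through the mediating objects $Q^{\tau_a}$ and the maps $\hat{g}^{\tau_a}$ of \ref{point_factorisation}, into the elementwise pullback statement, and justifying the reduction from consecutive indices of a discretising sequence to arbitrary $s<t$, which hinges on the independence of the factorisation from the chosen discretising sequence together with the pasting law for pullbacks.
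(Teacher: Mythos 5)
Your proof is correct, and it is built from the same ingredients as the paper's, but the equivalences are assembled differently. The paper runs the cycle $1\Rightarrow2\Rightarrow3\Rightarrow1$: it gets $1\Rightarrow2$ in one line from the model-categorical fact that cofibrations are preserved by pushout (applied to $0\leftarrow X\xrightarrow{g}Y$), proves $2\Rightarrow3$ by identifying the pullback of $X^t\hookrightarrow Y^t\leftarrow Y^s$ with the kernel of $Y^s\twoheadrightarrow Y^s/g(X)^s\hookrightarrow Y^t/g(X)^t$, and proves $3\Rightarrow1$ via an abstract linear-algebra lemma (the mediating map into the pullback is surjective iff the mediating map out of the pushout is injective). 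You instead make $3$ the hub: your $2\Leftrightarrow3$ is the paper's kernel identification upgraded to an equivalence (which simultaneously yields the implication $3\Rightarrow2$ that the paper gets only by going around the cycle), and your $3\Rightarrow1$ is an explicit element-level instance of the paper's lemma, via the presentation $Q^{\tau_a}=(Y^{\tau_{a-1}}\oplus X^{\tau_a})/R$. The genuinely new piece is your direct $1\Rightarrow3$: rather than routing through cofibrancy of the quotient, you use that a pushout along a monomorphism in an abelian category is bicartesian, cancel the monomorphism $\hat{g}^{\tau_a}$ to identify $Y^{\tau_{a-1}}\times_{Y^{\tau_a}}X^{\tau_a}$ with $Y^{\tau_{a-1}}\times_{Q^{\tau_a}}X^{\tau_a}\cong X^{\tau_{a-1}}$, and paste pullbacks over a refined discretising sequence. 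The paper's route is shorter because the one-line $1\Rightarrow2$ does the work abstractly; yours is more self-contained and elementwise, at the cost of having to justify the refinement/pasting reduction (which you do correctly, invoking the independence of the factorisation from the discretising sequence stated in \ref{point_factorisation}). No gaps.
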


\begin{proof} 
	In the proof, we utilise the following fundamental linear algebra statement.
	Consider commutative  diagrams   of  vector spaces:
	\[
	\begin{tikzcd}[row sep=12]
	V \ar[r, "\alpha_0"] \ar[d, "\alpha_1"'] 
	& W_0 \ar[d, "\beta_0"]
	\\
	W_1\ar{r}{\beta_1}
	& U
	\end{tikzcd}
	\]
	It leads to two vector spaces:
	\[
	P:=\lim(\begin{tikzcd}[ampersand replacement=\&] 
	W_1 \ar[r,"\beta_1"] \& U \& W_0 \ar[l, "\beta_0"']
	\end{tikzcd})
	\ \ \ \ \  \ \ 
	Q:=\colim(\begin{tikzcd}[ampersand replacement=\&] 
	W_1 \& V \ar[r, "\alpha_0"] \ar[l, "\alpha_1"'] \& W_0
	\end{tikzcd})
	\]
	and two linear functions $\alpha\colon V\to P$ and $\beta\colon Q\to U$ that make the following diagrams commutative, where the inside squares are respectively a pullback and a pushout:
	\[ 
	\begin{tikzcd}[column sep=0.9cm, row sep=15pt]
	V \ar[drr, bend left=16, "\alpha_0"] \ar[rd, "\alpha"]
	\ar[rdd, bend right=16, "\alpha_1"']
	& [-0.5cm]
	&
	\\
	[-0.4cm]
	& P\ar{r} \ar{d} 
	& W_0 \ar[d, "\beta_0"]
	\\
	& W_1 \ar[r,swap, "\beta_1"']
	& U
	\end{tikzcd} 
	\qquad \quad 
	\begin{tikzcd}[column sep=0.9cm, row sep=15pt]
	V \ar[r, "\alpha_0"] \ar[d, "\alpha_1"']
	& W_0 \ar[d] \ar[ddr, bend left=16, "\beta_0"]
	& [-0.4cm]
	\\
	W_1 \ar[r] \ar[drr, bend right=16, swap,"\beta_1"]
	& Q \ar[dr, pos=0.3,"\beta"]
	\\
	[-0.5cm]
	& 
	& U
	\end{tikzcd}
	\]
	Then $\alpha\colon V\to P$ is surjective if and only if   $\beta\colon Q\to U$ is injective. 
	\smallskip
	
	\noindent
	{\it 1}$\Rightarrow${\it 2}:\quad  The first part of {\it 2} follows from Proposition~\ref{char_cofibrant}.1, and the second from the fact that in a model category cofibrations are preserved by pushouts.
	\smallskip
	
	\noindent
	{\it 2}$\Rightarrow${\it 3}:\quad For all $s<t$ in $\posr$, we have the following commutative diagram, where the indicated arrows are cofibrations in $\ch$:
	\[
	\begin{tikzcd}[row sep=15pt]
	X^{s} \ar[d, "X^{s<t}"'] \ar[r, hook, "g^{s}"]
	& Y^{s} \ar[d, "Y^{s<t}"] \ar[r, two heads]
	& Y^{s}/g(X)^{s} \ar[d, hook]
	\\
	X^{t} \ar[r, hook, "g^{t}"] 
	& Y^{t} \ar[r, two heads]
	& Y^{t}/g(X)^{t} 
	\end{tikzcd}
	\]
	The pullback 
	$\lim(
	\hspace*{-0.15cm}\begin{tikzcd} 
	X^{t} \ar[r, hook, "g^{t}"] 
	& [-0.15cm] Y^{t} 
	& [0.15cm] Y^{s} \ar[l, "Y^{s<t}"'] 
	\end{tikzcd}\hspace*{-0.15cm})$ 
	is isomorphic to the kernel of the composition 
	$Y^s\twoheadrightarrow Y^{s}/g(X)^{s}\hookrightarrow Y^{t}/g(X)^{t}$.
	Since the second map is an inclusion, this pullback coincides with the kernel of $Y^s\twoheadrightarrow Y^s/g(X)^s$. 
	Consequently, the left square is a pullback.
	\smallskip	
	
	\noindent
	{\it 3}$\Rightarrow${\it 1}:\quad 
	Let $0=\tau_0<\cdots<\tau_k$ be a sequence that discretises both $X$ and $Y$. 
	According to Theorem~\ref{tame_is_model_cat} we need to show that $\hat{g}^{\tau_a}\colon P^{\tau_a}\to Y^{\tau_a}$ is a cofibration for all $a$.  
	This is a consequence of the linear algebra statement given at the beginning of the proof.
	Indeed, the square is a pullback, its mediating morphism is the identity and thus $\hat{g}^{\tau_a}$ is a monomorphism, and so a cofibration in $\ch$.
\end{proof}

\begin{point}\label{point_morphisms_out_interval_sphere_1}
	{\em Morphisms out of $\intS{s,\infty}$.}
	A morphism $g\colon \intS{s,\infty}\to X$ leads to a linear function $g_{n}^{s}\colon \intS{s,\infty}_{n}^{s}=K\to X_{n}^{s}$.
	Let $x:=g_{n}^{s}(1)$ in $X_{n}^{s}$.
	This element satisfies the equation $\delta(x)=0$, which means that $x$ belongs to the cycles $Z_{n}X^{s}$.
	Choosing an element in $Z_{n}X^{s}$ is all what is needed to describe a morphism out of $\intS{s,\infty}$.
	For any $x$ in $Z_{n}X^{s}$, there is a unique morphism $I(x)\colon\intS{s,\infty}\to X$ such that $x=I(x)_{n}^{s}(1)$.
	The association $g\mapsto g_{n}^{s}(1)$ describes a bijection (in fact a linear isomorphism) between the set of morphisms  $\intS{s,\infty}\to X$ and the set of cycles $Z_{n}X^{s}$.  
\end{point}

\begin{point}\label{point_morphisms_out_interval_sphere_2}
	{\em Morphisms out of $\intS{s,e}$.}
	Let $s\leq e<\infty$. 
	A morphism $g\colon \intS{s,e}\to X$ leads to two  functions $g_{n}^{s}\colon \intS{s,e}_{n}^{s}=K\to X_{n}^{s}$ and $g_{n+1}^{e}\colon \intS{s,e}_{n+1}^{e}=K\to X_{n+1}^{e}$.
	Define two elements $x:=g_{n}^{s}(1)$ in $X_{n}^{s}$ and  $y:=g_{n+1}^{e}(1)$ in $X_{n+1}^e$.
	These elements satisfy equations $\delta(x)=0$ and $X_n^{s\leq e}(x)=\delta(y)$.
	These equations contain all the information needed to describe a morphism out of $\intS{s,e}$.
	If $x$ in $X_{n}^{s}$ and $y$ in $X_{n+1}^{e}$ satisfy these  equations, then there is a unique morphism  $I(x,y)\colon \intS{s,e}\to X$ such that $x=I(x,y)_{n}^{s}(1)$ and $y=I(x,y)_{n+1}^{e}(1)$. 
	The association $g\mapsto\left(g_{n}^{s}(1),g_{n+1}^{e}(1)\right)$ 
	describes therefore a bijection between the set of morphisms $\intS{s,e}\to X$ and the pullback:
	\[
	\text{lim}(
	\hspace*{-0.15cm}\begin{tikzcd}[column sep=1.2cm]
	Z_{n}X^{s} \ar[r, "X_{n}^{s\leq e}"]
	& X_{n}^{e} 
	& [-0.3cm] X_{n+1}^{e} \ar[l, "\delta"']
	\end{tikzcd} 
	\hspace*{-0.15cm}
	)
	\]
	
	In the case $s=e$, this pullback can be identified with $X_{n+1}^{s}$. 
	Thus, the set of morphisms $\intS{s,s}\to X$ 
	is in bijection with $X_{n+1}^{s}$. 
	
	For  $X=\intS{s,s}$, the elements $1$ in $K=\intS{s,s}^s_n$ and $1$ in $K=\intS{s,s}^e_{n+1}$ satisfy the required equations. 
	The obtained morphism $\intS{s,e}\to \intS{s,s}$ is called the \textbf{standard inclusion}.
	Since not all of the transition morphisms of the quotient $\intS{s,s}/\intS{s,e}$ are monomorphisms, the standard inclusion is not a cofibration.
\end{point}

\begin{point}{\em Cofibrations out of $\intS{s,e}$.}
	In this paragraph we describe necessary and sufficient conditions for $g\colon \intS{s,e}\to X$ to be a cofibration.
	Since $\intS{s,e}$ is cofibrant, if there is such a cofibration, then $X$ has to be cofibrant. 
	Let us then make this assumption. 
	The transition morphisms in $X$ are therefore assumed to be cofibrations (see Proposition~\ref{char_cofibrant}).
	
	Choose $0=\tau_{0} <\cdots < \tau_{k}$  that discretises  $X$ and $\intS{s,e}$, and consider the diagrams:
	\[
	\begin{tikzcd}[column sep=2.1cm, row sep=18pt]
	\intS{s,e}^{\tau_{a-1}} \ar[d, "g^{\tau_{a-1}}"'] 
	\ar[r, hook, "{\intS{s,e}^{\tau_{a-1}<\tau_{a}}}"]
	& \intS{s,e}^{\tau_{a}} \ar[d, "g^{\tau_{a}}"]
	\\
	X^{\tau_{a-1}} \ar[r, hook, "X^{\tau_{a-1}<\tau_{a}}"]
	& X^{\tau_{a}}
	\end{tikzcd}\]
	By Proposition~\ref{char_cofibrations}, since $X$ is cofibrant, $g\colon \intS{s,e}\to X$ is a cofibration if and only if the diagram above is pullback for all $a=1,\dots, k$. 
	These diagrams are pullbacks if in every homological degree $h$ they are pullbacks of vector spaces. 
	They can fail to be so only if the transition morphism  $\intS{s,e}^{\tau_{a-1}<\tau_{a}}_h$ is not the identity, which happens in two cases: (i) $\tau_{a-1}<s=\tau_{a}$ and $h=n$, or (ii) $e<\infty$, $\tau_{a-1}<e=\tau_{a}$, and $h=n+1$. 
	In both of these case, the diagram above becomes:
	\[
	\begin{tikzcd}[column sep=1.9cm, row sep=15pt]
	0 \ar[r] \ar[d]
	& K \ar[d, "g^{\tau_{a}}_h"]
	\\
	X_{h}^{\tau_{a-1}} \ar[r, hook, "X^{\tau_{a-1}<\tau_{a}}_h"]
	& X_{h}^{\tau_{a}}
	\end{tikzcd}
	\]
	and hence it is a pullback if and only if $g^{\tau_{a}}_h(1)$ is not in the image of $X^{\tau_{a-1}<\tau_{a}}_h$.
	We have just proven:
\end{point}

\begin{proposition}\label{prop_cofibrations_from_interval_spheres}
	Let $X$ be an object in $\tame$. 
	
	\begin{enumerate}
		\item Let $n$ be a natural number, $s$ be in $\posr$, and  
		$x$ be in $Z_{n}X^{s}$. 
		Then the morphism $I(x)\colon \intS{s,\infty}\to X$ (see \ref{point_morphisms_out_interval_sphere_1}) is a cofibration if and only if $X$ is cofibrant and $x$ is not in the image of $X_{n}^{t<s}\colon X_{n}^{t}\to X_{n}^{s}$ for any $t<s$.
		\item Let $n$ be a natural number, $s\leq e$ be in $\posr$, and $x$ in $Z_{n}X^{s}$ and $y$ in $X^{e}_{n+1}$ be such that $X_{n}^{s\leq e}(x)=\delta(y)$. 
		Then $I(x,y)\colon \intS{s,e}\to X$ (see \ref{point_morphisms_out_interval_sphere_2}) is a cofibration if and only if $X$ is cofibrant, $x$ is not in the image of $X_{n}^{t<s}\colon X_{n}^{t}\to X_{n}^{s}$ for any $t<s$, and $y$ is not in the image of $X_{n+1}^{t<e}\colon X_{n+1}^{t}\to X_{n+1}^{e}$ for any $t<e$.
	\end{enumerate}
\end{proposition}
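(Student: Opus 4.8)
The plan is to derive both statements from the characterisation of cofibrations in Proposition~\ref{char_cofibrations}, formalising the degreewise analysis carried out just above and treating part~1 uniformly with part~2 by allowing the death value $e=\infty$. First I would reduce to the case where $X$ is cofibrant. Since $\intS{s,e}$ is cofibrant and the composite of $\emptyset\hookrightarrow\intS{s,e}$ with a cofibration is again a cofibration, any cofibration $\intS{s,e}\to X$ makes $\emptyset\to X$ a cofibration, so $X$ is cofibrant; as cofibrancy of $X$ is also part of the right-hand conditions, I may assume throughout that $X$ is cofibrant. By Proposition~\ref{char_cofibrant} this means each transition $X^{t<t'}$ is a monomorphism in every homological degree.

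Next I would fix a discretising sequence $0=\tau_0<\cdots<\tau_k$ for $X$ that also contains $s$ and, when $e<\infty$, the value $e$, and apply Proposition~\ref{char_cofibrations}(3). Since limits of chain complexes are computed degreewise, $I(x)$ (resp.\ $I(x,y)$) is a cofibration if and only if, for every $a$ and every degree $h$, the square
\[
\begin{tikzcd}[row sep=12pt]
\intS{s,e}^{\tau_{a-1}}_h \ar[r] \ar[d] & \intS{s,e}^{\tau_a}_h \ar[d] \\
X^{\tau_{a-1}}_h \ar[r, hook] & X^{\tau_a}_h
\end{tikzcd}
\]
is a pullback of vector spaces. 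The key simplification is that whenever the top arrow is the identity the square is automatically a pullback, because the bottom arrow is injective; hence only the steps where $\intS{s,e}^{\tau_{a-1}<\tau_a}_h$ fails to be an isomorphism can obstruct cofibrancy. Inspecting the interval spheres, the only non-isomorphism transitions are the inclusions $0\hookrightarrow K$ occurring at the birth $\tau_a=s$ in degree $h=n$ and, when $e<\infty$, at the death $\tau_a=e$ in degree $h=n+1$.

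At such a step the square degenerates to
\[
\begin{tikzcd}[row sep=12pt]
0 \ar[r] \ar[d] & K \ar[d, "g^{\tau_a}_h"] \\
X^{\tau_{a-1}}_h \ar[r, hook] & X^{\tau_a}_h
\end{tikzcd}
\]
which is a pullback precisely when $g^{\tau_a}_h(1)$ does not lie in $\Img(X^{\tau_{a-1}<\tau_a}_h)$. By \ref{point_morphisms_out_interval_sphere_1} and \ref{point_morphisms_out_interval_sphere_2} this generating image equals $x$ in degree $n$ at the birth and $y$ in degree $n+1$ at the death. To state the conditions independently of the chosen sequence, I would use that $\Img(X^{t<s}_n)$ is monotone in $t$ and that $X^{\tau_{a-1}<t}_n$ is an isomorphism for $\tau_{a-1}\le t<s$, since $\tau_{a-1}$ and $s$ are consecutive: this makes ``$x\notin\Img(X^{\tau_{a-1}<s}_n)$'' for the immediate predecessor $\tau_{a-1}$ of $s$ equivalent to ``$x\notin\Img(X^{t<s}_n)$ for every $t<s$,'' and analogously for $y$ at $e$. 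Reassembling the two cases ($e=\infty$ for part~1; $s\le e<\infty$, including $s=e$, for part~2) then yields exactly the stated conditions.

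The step I expect to be most delicate is this last passage from a single discretising sequence to the quantifier over all $t<s$ (and $t<e$), together with the observation that squares with identity top map are pullbacks. I would also note that the monomorphism clause of Proposition~\ref{char_cofibrations} is not an extra hypothesis: degreewise injectivity of the transitions of $X$ shows $g^t$ is a monomorphism exactly when the relevant generators have nonzero image, and for $s,e>0$ this is already forced by the ``not in the image'' conditions, since $0$ lies in every such image.
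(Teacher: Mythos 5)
Your argument is correct and is essentially the paper's own proof (given in the paragraph ``Cofibrations out of $\intS{s,e}$'' immediately preceding the proposition): reduce to cofibrant $X$, apply Proposition~\ref{char_cofibrations}(3) along a discretising sequence containing $s$ and $e$, and observe that only the two non-identity transitions of the interval sphere (birth in degree $n$, death in degree $n+1$) can obstruct the pullback condition, each reducing to ``the generator is not in the image of the preceding transition.'' You are in fact slightly more careful than the paper on two points it leaves implicit, namely the passage from the single predecessor $\tau_{a-1}$ to the quantifier over all $t<s$ (and $t<e$), and the discharge of the monomorphism clause of Proposition~\ref{char_cofibrations}.
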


We are now ready to:
\begin{proof}[Proof of Theorem~\ref{dec_theorem}]
	
	\noindent
	{\it (2)}\quad This is a consequence the fact that $\intS{s,e}$ is indecomposable, for all $n$ and $s\leq e$ (see~\cite{general_krull_schmidt, auslander_reiten_smalo}).
	\smallskip
	
	\noindent
	{\it (1)}\quad
	Let $X$ in $\tame$ be cofibrant. 
	Choose a sequence $0=\tau_0<\cdots<\tau_k$ discretising $X$. 
	The morphism $X^{\tau_{a-1}<\tau_{a}}\colon X^{\tau_{a-1}}\to X^{\tau_{a}}$ is then a cofibration in $\mathbf{ch}$ for every $a=1,\ldots,k$.
	
	Assume first all the differentials in $X^{t}$ are trivial  for all $t$. 
	In this case, $X$ is isomorphic to $\bigoplus_{n\geq 0} X_{n}$.  
	Let $l_{n}^{0}:=\dim X_{n}^{0}$ and $l_{n}^{\tau_a}:=
	\dim\coker(
	X_{n}^{\tau_{a-1}<\tau_{a}}\colon X_{n}^{\tau_{a-1}}\to X_{n}^{\tau_{a}}
	)$ for $a=1,\ldots, k$. 
	Then $X_{n}$ is isomorphic to
	$\bigoplus_{a=0}^{k}\bigoplus_{j=1}^{l_{n}^{\tau_a}}\intS{\tau_{a},\infty}$
	and consequently $X$ is isomorphic to:
	\[
	\bigoplus_{n\geq 0}
	\bigoplus_{a=0}^{k}
	\bigoplus_{j=1}^{l_{n}^{\tau_a}}\intS{\tau_{a},\infty}
	\]
	
	Assume now there is a non-trivial differential in $X$ and set:
	\begin{enumerate}[(a)]
		\item $n$ to be the smallest natural number for which $\delta\colon X_{n+1}^t\to X_{n}^t$ is non trivial for some $t$. 
		This assumption implies $X_{n}^{t}=Z_{n}X^t$ for any $t$.
		\item $e$ to be the smallest $\tau_{a}$ for which $\delta\colon X_{n+1}^{\tau_a}\to X_{n}^{\tau_{a}}$ is non trivial.
		\item $s$ to be the smallest $\tau_{a}$ such that $\tau_{a}\leq e$ and for which the following intersection contains a non zero element:
		\[
		\Img\left(
		X_{n}^{\tau_{a}\leq e}
		\colon 
		Z_{n}X^{\tau_{a}}=X_{n}^{\tau_{a}}\hookrightarrow X_{n}^{e} 
		\right)
		\cap 
		\Img\left(\delta\colon X_{n+1}^{e}\to X_{n}^{e}
		\right)
		\not=0
		\]
	\end{enumerate}
	We claim that these choices imply $X$ is isomorphic to  $\intS{s,e}\oplus X'$.
	We can then apply the same strategy to $X'$.
	If $X'$ has a non-trivial differential, we split out of $X'$ another direct summand of the form $\intS[n']{s',e'}$ for $s'\leq e'$ in $\posr$. 
	Tameness guarantees that this process eventually terminates and we end up with an object with all the differentials being trivial, which we can decompose as described above and the theorem would be proven.
	
	It remains to show our claim that $X$ is isomorphic to  $\intS{s,e}\oplus X'$.
	For that, we make some choices:
	
	\begin{enumerate}
		\item Choose a non zero vector $v$ in the  intersection from step (c) above.
		\item Choose $x$ in $X_{n}^{s}=Z_{n}X^{s}$ and $y$ in $X_{n+1}^{e}$ such that:
		$
		X_{n}^{s\leq e}(x)=v=\delta(y)
		$.
		\item Consider the morphism $I(x,y)\colon \intS{s,e}\to X$ (see \ref{point_morphisms_out_interval_sphere_2}).
	\end{enumerate}
	The reason why we made all these choices is to ensure $I(x,y)\colon \intS{s,e}\to X$ is a cofibration (see Proposition~\ref{prop_cofibrations_from_interval_spheres}.2).
	
	Let $\phi\colon X\to \intS{s,s}$ be a morphism that fits into the following commutative diagram, where the top horizontal morphism is the standard inclusion (see \ref{point_morphisms_out_interval_sphere_2}). 
	Existence of such a $\phi$ is guaranteed by axiom \ref{MC4}:
	\[
	\begin{tikzcd}[row sep=15pt]
	\intS{s,e} \ar[d, hook', "{I(x,y)}"'] \ar[r]
	& \intS{s,s} \ar[d, two heads, "\sim"]
	\\
	X \ar[ur, dotted, "\phi"] \ar[r]
	& 0
	\end{tikzcd}
	\]
	If $t<e$, then the differential $\delta\colon X_{n+1}^{t}\to X_{n}^{t}$ is trivial.
	Thus, for any $s\leq t<e$, the linear function $\phi_{n+1}^{t}\colon X_{n+1}^{t}\to \intS{s,e}_{n+1}^{t}$ has to be trivial.
	It follows that $\phi\colon X\to \intS{s,s}$ factors through the standard inclusion:
	\[
	\begin{tikzcd}[column sep=40, row sep=13pt]
	& \intS{s,e}\ar{d}{i}
	\\
	X\ar{r}{\phi}\ar[bend left=10]{ru}{\psi} 
	& \intS{s,s}
	\end{tikzcd}
	\]
	The composition $\begin{tikzcd}
	\intS{s,e} \ar[hook]{r}{I(x,y)}
	& [0.2cm]X\ar{r}{\psi} 
	& \intS{s,e}
	\end{tikzcd}$ is therefore the identity and consequently $X$ is isomorphic to a direct sum $\intS{s,e}\oplus X'$.
\end{proof}

\section{Betti diagrams of objects}\label{section_Betti}
Let $X$ be a cofibrant object in $\tame$.
According to Theorem~\ref{dec_theorem}  there are unique  functions $\{\beta_nX\colon \Omega\to \{0,1,\ldots\}\}_{n=0,1,\ldots}$, called \textbf{Betti diagrams} of $X$, such that $X$ is isomorphic to:
\[
\bigoplus_n\bigoplus_{(s,e)\in\Omega} \left(\intS{s,e}\right)^{\beta_n X(s,e)}
\]
Betti diagrams have finite support:  the set $\text{supp}(\beta_nX):=\{(s,e)\in \Omega\ |\ \beta_nX(s,e)\not=0\}$ is finite for every $n$. 
Thus, to describe an isomorphism type of a  cofibrant object $X$ in $\tame$, a sequence of functions $\{\beta_nX\colon \Omega\to \{0,1,\ldots\}\}_{n=0,1,\ldots}$ with finite supports needs to be specified. 
Such functions are also called persistence diagrams (see \cite{edel_harer}).
Betti diagrams are complete invariants  of cofibrant objects in  $\tame$ and they play a  fundamental role in persistence and TDA.

In this section, we explain various ways of assigning Betti diagrams to arbitrary objects (not only cofibrant) in $\tame$.
For such general objects  one should not expect these invariants to be complete. 
Our strategy is to approximate arbitrary objects  by cofibrant objects and use Theorem~\ref{dec_theorem} to extract Betti diagrams from the obtained approximations. 

Minimal representatives (Definition~\ref{def_minimal_object}) and minimal covers (Definition~\ref{def_minimal_morphism}) are the most fundamental constructions that convert an arbitrary object in $\tame$ into a cofibrant one. 
This leads to two invariants of an isomorphism class of $X$ in $\tame$ which are called \textbf{minimal Betti diagrams} and \textbf{Betti diagrams}:
\[
\begin{tikzcd}[column sep =5pt]
& X\ar[bend right=10,mapsto]{dl}[swap]{\text{minimal  Betti diagrams}}\ar[bend left=10,mapsto]{dr}{\text{Betti diagrams}}\\
\{\beta_{n}X'\colon \Omega\to \{0,1,\ldots\}\}_{n=0,1,\ldots}
& & \{\beta_{n}\text{cov}(X)\colon \Omega\to \{0,1,\ldots\}\}_{n=0,1,\ldots}
\end{tikzcd}\]
where $X'$ is a minimal representative of $X$ and $\text{cov}(X)\to X$ is a minimal cover of $X$.
We also use the symbols $\beta_n^{\text{min}}X$ and $\beta_{n}X$ to denote respectively $\beta_{n}X'$ and $\beta_{n}\text{cov}(X)$. 
Moreover, if $X$ and $Y$ are weakly equivalent, then $\beta_n^{\min}X= \beta_n^{\min}Y$.

To understand relationship between these invariants, we first characterise minimal objects (see Definition~\ref{def_minimal_object}) in $\tame$.
Let $X$ be cofibrant in $\tame$. 
Consider its decomposition into a direct sum of interval spheres (Theorem~\ref{dec_theorem}). 
If this decomposition contains a component of the form $\intS{s,s}$, then by projecting it away, we obtain a self weak equivalence of $X$ which is not an isomorphism. 
Thus, if $X$ is minimal, its decomposition cannot contain such components, which is equivalent to having $\beta_nX(s,s)=0$ for all $n$ and all $s$. This implication can be reversed:

\begin{proposition}\thlabel{adfgsdfhgs}
	\begin{enumerate}
		\item 
		An object $X$ in $\tame$ is minimal if and only if it is cofibrant and  $\beta_nX(s,s)=0$ for all natural numbers $n$ and all $s$ in $[0,\infty)$.
		\item A morphism $c\colon X\to Y$ is a minimal cover if and only if $X$ is cofibrant, $c$ is a weak equivalence and a fibration, and
		no direct summand  of $X$ of the form $\intS{s,s}$, for some $n$ and $s$,  is in the kernel of $c$ (is mapped via $c$ to $0$).
	\end{enumerate}
\end{proposition}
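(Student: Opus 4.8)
The plan is to handle both parts using the decomposition of cofibrant objects into interval spheres (Theorem~\ref{dec_theorem}) together with two elementary observations. First, every object of $\ch$ is fibrant (the map to the zero complex is an epimorphism in every positive degree), hence every object of $\tame$ is fibrant; so an object is minimal exactly when it is cofibrant and every self weak equivalence of it is an isomorphism. Second, the homology of an interval sphere is $H\intS[n]{s,e}=K_{[s,e)}$ concentrated in degree $n$, which vanishes precisely when $s=e$; consequently a cofibrant object has trivial homology in every parameter if and only if all its interval-sphere summands are of type $\intS{s,s}$. I will also use that each interval sphere is indecomposable with endomorphism ring $K$ (immediate from the enumerations in \ref{point_morphisms_out_interval_sphere_1} and \ref{point_morphisms_out_interval_sphere_2}), so the full subcategory of cofibrant objects, whose endomorphism algebras are finite-dimensional over $K$, is Krull--Schmidt.

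For part (1), let $X$ be cofibrant and write $X\cong\bigoplus_i\intS[n_i]{s_i,e_i}$. Since the summands are indecomposable with residue field $K$, an endomorphism $\phi$ of $X$ is an isomorphism if and only if, for each isomorphism type $T$ of summand, the induced block $\Phi_T$ on the corresponding multiplicity space is invertible. Now $H$ sets up a bijection between the indecomposable summands of $X$ and those of $HX$ that preserves isomorphism types and induces isomorphisms on endomorphism rings, but it is blind precisely to the summands of type $\intS{s,s}$, whose homology is zero. Hence for a self weak equivalence $\phi$ (so that $H\phi$ is an isomorphism) every block $\Phi_T$ with $T$ off-diagonal is automatically invertible, and the only possible obstruction to $\phi$ being an isomorphism is concentrated in the $\intS{s,s}$-blocks. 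Therefore, if $\beta_nX(s,s)=0$ for all $n,s$ there are no such blocks and $\phi$ is an isomorphism, so $X$ is minimal; conversely, if some $\intS{s,s}$ is a summand, projecting it away is a self weak equivalence that is not an isomorphism, so $X$ is not minimal. This yields the biconditional.

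For part (2), a minimal cover of $Y$ is a minimal factorisation of $\emptyset\to Y$, so $c\colon X\to Y$ is a minimal cover exactly when $X$ is cofibrant, $c$ is a fibration and a weak equivalence, and every $\phi\colon X\to X$ with $c\phi=c$ is an isomorphism. One direction is immediate: if some summand $\intS{s,s}$ of $X$ lies in $\ker c$, then the projection killing it is a non-isomorphism $\phi$ with $c\phi=c$, so $c$ is not minimal. For the converse I would invoke existence of a minimal cover: since $\ch$ satisfies the minimal factorisation axiom, so does $\tame$ (Theorem~\ref{sdvgsdfhsfgn}), giving a minimal cover $c_0\colon\operatorname{cov}(Y)\to Y$. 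Applying the lifting axiom \ref{MC4} twice produces $u\colon X\to\operatorname{cov}(Y)$ and $v\colon\operatorname{cov}(Y)\to X$ with $c_0u=c$ and $cv=c_0$; then $uv$ is a self-map of $\operatorname{cov}(Y)$ over $Y$, hence an isomorphism by minimality of $c_0$, so $u$ is a split epimorphism and $X\cong\operatorname{cov}(Y)\oplus\ker u$. The retract $\ker u$ is cofibrant, lies in $\ker c$ (as $c=c_0u$), and is acyclic (because $c$ is a weak equivalence); by the homological remark above all its interval-sphere summands are then of type $\intS{s,s}$, and the hypothesis forces $\ker u=0$, i.e. $u$ is an isomorphism. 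Transporting the minimality of $c_0$ along $u$ shows every $\phi$ with $c\phi=c$ is an isomorphism, so $c$ is a minimal cover.

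I expect the main obstacle to be the Krull--Schmidt bookkeeping in part (1): making precise that $\phi$ is an isomorphism exactly when its type-blocks $\Phi_T$ are invertible, and that $H$ certifies all and only the off-diagonal blocks while remaining blind to the $\intS{s,s}$-blocks. Once this analysis is set up, part (1) follows at once, and part (2) reduces to the uniqueness of minimal factorisations (Proposition~\ref{min_fact_unique}) together with the identification of $\ker u$ as an acyclic, hence $\intS{s,s}$-built, summand.
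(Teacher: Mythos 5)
Your proof is correct, but both halves run on a different engine from the paper's. The paper's entire argument rests on a single compactness trick: for any self weak equivalence $f\colon X\to X$ there is an $l$ with $f^l=f^{l+k}$ for all $k\geq 0$, which splits $X\cong \Img(f^l)\oplus\ker(f^l)$ with $X\twoheadrightarrow\Img(f^l)$ and $\ker(f^l)\to 0$ weak equivalences; Theorem~\ref{dec_theorem} then identifies the acyclic cofibrant summand $\ker(f^l)$ as a direct sum of diagonal spheres $\intS{s,s}$, and both statements follow at once (for part~2 one only adds that $cf=c$ forces $\ker(f^l)\subseteq\ker(c)$). You instead prove part~1 by a Krull--Schmidt block analysis, using that each interval sphere has endomorphism ring $K$ and that $H$ matches the off-diagonal summand types bijectively while annihilating the diagonal ones, and part~2 by comparing $c$ against an abstract minimal cover $c_0\colon\text{cov}(Y)\to Y$ supplied by Theorem~\ref{sdvgsdfhsfgn}, splitting off $\ker u$ and recognising it as an acyclic cofibrant direct summand of $X$ inside $\ker c$. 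Both routes are sound, and your appeal to the existence theorem is not circular since it is established before this proposition. The paper's proof is more economical: one lemma drives both parts, no semiperfectness of endomorphism rings is invoked, and part~2 needs no existence statement. Your part~1 buys a sharper structural picture (it locates the failure of invertibility of a self weak equivalence precisely in the $\intS{s,s}$-blocks), and your part~2 cleanly reduces the criterion to uniqueness of minimal factorisations. Two details you should make explicit: the acyclicity of $\ker u$ uses the two-out-of-three axiom applied to $c=c_0u$ (both $c$ and $c_0$ are weak equivalences, hence so is $u$), not merely that $c$ is one; and the block comparison in part~1 needs the observation that for $s<e$ the functor $H$ induces the identity of $K$ on endomorphism rings and separates off-diagonal isomorphism types, both of which are immediate from \ref{point_morphisms_out_interval_sphere_1} and \ref{point_morphisms_out_interval_sphere_2} but are doing real work.
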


\begin{proof}  
	We start with describing how a self weak equivalence of an object  $X$ in $\tame$ leads to its decomposition as a direct sum.
	Let $f\colon X\to X$ be a weak equivalence.
	Since  all objects   in  $\tame$ are compact, there is a natural number $l$ for which $f^l=f^{l+k}$ for all $k\geq 0$. We can then form the following commutative diagram where the top horizontal morphism is an isomorphism:
	\[\begin{tikzcd}[row sep=5pt]
	& \Img(f^l)\ar[bend left=10]{dr}\ar{rr} & & \Img(f^l)\ar[bend left=10]{dr} \\
	X\ar[two heads, bend left=10]{ur}\ar{rr}[description]{f^l} & & X\ar[two heads, bend left=10]{ur}\ar{rr}[description]{f^l}   & & X
	\end{tikzcd}\]
	Commutativity of this diagram, and the facts that the top horizontal morphism is an isomorphism and $f^l$ is a weak equivalence, have  two consequences: $X$ is isomorphic to a direct sum $\Img(f^l)\oplus \text{ker}(f^l)$, and the morphisms $X\twoheadrightarrow \Img(f^l)$
	and $\ker(f^l)\to 0$   are  weak equivalences. 
	
	\noindent
	{\it 1.}\quad  Assume $X$ is cofibrant and $\beta_nX(s,s)=0$ for all  $n$ and  $s$. 
	Let $f\colon X\to X$ be a self weak equivalence and $l$ be such that $X$ is isomorphic to $\Img(f^l)\oplus \text{ker}(f^l)$ and the morphism $\ker(f^l)\to 0$   is a  weak equivalence.
	According to Theorem~\ref{dec_theorem}, $\ker(f^l)$ is  a direct sum of interval spheres of the form $\intS{s,s}$. Since by the assumption $X$ does not have such components,  $\text{ker}(f^l)= 0$ and consequently $f^l$ and hence $f$ are isomorphisms.  That proves statement {\it 1}.
	
	\noindent
	{\it 2.}\quad 
	Consider a morphism $c\colon X\to Y$ such that $X$ is cofibrant, $c$ is a weak equivalence and a fibration.  If the kernel of $c$ contains a direct summand of $X$ of the form  $\intS{s,s}$, then by projecting it away, we would obtain
	a weak equivalence  $f\colon X\to X$ such that $cf=c$ and which is not an isomorphism, preventing $c$ to be a minimal cover.
	
	Assume now that the kernel of $c$ does not contain any direct summand of $X$ of the form $\intS{s,s}$. 
	Consider a weak equivalence $f\colon X\to X$ such that $cf=c$. 
	Choose $l$ for which $X$ is isomorphic to $\Img(f^l)\oplus \text{ker}(f^l)$ and the morphism $\ker(f^l)\to 0$ is a weak equivalence. 
	As before, Theorem~\ref{dec_theorem} ensures that $\ker(f^l)$ is a direct sum of interval spheres of the form $\intS{s,s}$. Since $\ker(f^l)$ is in $\text{ker}(c)$ and it is a direct summand of $X$, the assumption  implies $\ker(f^l)$ is trivial, and as before 
	$f$ is an isomorphism. 
\end{proof}

\begin{corollary}
	Let $X$ be an object in  $\tame$ (not necessarily cofibrant). 
	\begin{enumerate}
		\item A cofibrant object $X'$ in  $\tame$  is a minimal representative of $X$ if and only if it is weakly equivalent to $X$ and $\beta_nX'(s,s)=0$ for all natural numbers $n$ and all $s$ in $[0,\infty)$.
		\item Let $X'$ be the minimal representative of $X$ and $\text{\rm cov}(X)$ its minimal cover. 
		Then $\beta_{n}X'(s,e)=\beta_{n}\text{\rm cov}(X)(s,e)$ for all  $s<e$. 
	\end{enumerate}
\end{corollary}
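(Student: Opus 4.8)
The plan is to deduce the whole corollary from the characterisation of minimal objects in Proposition~\ref{adfgsdfhgs}, the structure theorem (Theorem~\ref{dec_theorem}), and the uniqueness of minimal representatives (Proposition~\ref{min_rep_unique}). Part (1) is a direct translation of definitions: by Definition~\ref{def_minimal_object}, a cofibrant object $X'$ is a minimal representative of $X$ exactly when it is minimal and weakly equivalent to $X$. Since $X'$ is assumed cofibrant, Proposition~\ref{adfgsdfhgs}.1 rewrites the minimality of $X'$ as the vanishing $\beta_n X'(s,s)=0$ for all $n$ and all $s$. Substituting this equivalence yields the claim, so no separate argument is needed.

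For part (2) I would compare the interval-sphere decompositions of $\text{cov}(X)$ and $X'$. Applying Theorem~\ref{dec_theorem} to the cofibrant object $\text{cov}(X)$ and separating summands by type, I write $\text{cov}(X)\cong X''\oplus Z$, where $X''$ gathers the summands $\intS{s,e}$ with $s<e$ and $Z$ gathers the diagonal summands $\intS{s,s}$. Each $\intS{s,s}$ has trivial homology at every parameter (as recorded just before Theorem~\ref{dec_theorem}), so $H(Z^t)=0$ for all $t$; hence the splitting projection $\text{cov}(X)\to X''$ induces an isomorphism on homology at every $t$ and is thus a weak equivalence. As the minimal cover $\text{cov}(X)\to X$ is also a weak equivalence, the zigzag $X\leftarrow\text{cov}(X)\to X''$ shows $X''$ is weakly equivalent to $X$; and since $X''$ carries no diagonal summand, $\beta_n X''(s,s)=0$ for all $n$ and $s$, so Proposition~\ref{adfgsdfhgs}.1 makes $X''$ minimal, i.e. a minimal representative of $X$. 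Uniqueness of minimal representatives (Proposition~\ref{min_rep_unique}) then forces $X''\cong X'$. Finally, because the decomposition of Theorem~\ref{dec_theorem} is unique, $\beta_n$ is additive over direct sums, so $\text{cov}(X)\cong X'\oplus Z$ gives $\beta_n\text{cov}(X)=\beta_n X'+\beta_n Z$; as $Z$ contains no $\intS{s,e}$ with $s<e$ we have $\beta_n Z(s,e)=0$ for $s<e$, and therefore $\beta_n\text{cov}(X)(s,e)=\beta_n X'(s,e)$ for all $s<e$, as required.

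The one step needing care is the identification of $X''$ with the minimal representative: one must verify that discarding $Z$ is genuinely a weak equivalence and then apply uniqueness correctly. It is worth stressing that $\text{cov}(X)$ may legitimately carry diagonal summands — Proposition~\ref{adfgsdfhgs}.2 excludes such summands only from the kernel of the cover, not from $\text{cov}(X)$ itself — so these summands are absorbed into $Z$ and are precisely what allows the two Betti diagrams to differ on the diagonal $s=e$ while agreeing everywhere off it.
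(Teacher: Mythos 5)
Your proof is correct and follows the route the paper intends: the paper states this as an immediate consequence of Proposition~\ref{adfgsdfhgs} (whose preceding discussion already introduces the device of projecting away the diagonal summands $\intS{s,s}$, which have trivial homology at every parameter), and your argument simply makes that derivation explicit, using uniqueness of minimal representatives and additivity of Betti diagrams to conclude. No gaps.
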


According to the above corollary,  the minimal Betti diagrams and the Betti diagrams  may differ  only on the diagonal 
$\Delta:=\{(s,s)\in \Omega\}\subset \Omega$.  The minimal Betti diagrams ignore the diagonal by  assigning $0$ to all its  elements
(see Proposition~\ref{adfgsdfhgs}), reflecting the fact that minimal representative does not contain any component with trivial homology. 
The Betti diagrams on the other  hand  do not ignore  the diagonal and retain information about components of the cover $\text{cov}(X)$
that have trivial homology.

For certain objects in $\tame$, the minimal Betti diagrams and  the Betti diagrams  coincide and provide   a complete set of invariants:
\begin{proposition}\label{afdhggfbns}
	Assume $X$ and $Y$ in $\tame$ are such that all the differentials of  $X^t$ and $Y^t$ are trivial for all $t$ in $[0,\infty)$.
	Then:
	\begin{enumerate}
		\item $X$ and $Y$ are isomorphic if and only if they are  weakly equivalent. 
		\item $X$ and $Y$ are isomorphic if and only if their minimal Betti diagrams are equal.
		\item The minimal cover and the minimal representative of $X$ are isomorphic. 
		\item The minimal Betti diagrams and Betti diagrams of $X$ coincide.
	\end{enumerate}
\end{proposition}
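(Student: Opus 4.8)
The plan is to reduce everything to the single fact that an object with trivial differentials coincides with its own homology, and then to identify the minimal cover explicitly. First I would record that, since every $\delta$ of $X^t$ vanishes, one has $Z_nX^t=X^t_n$ and $B_nX^t=0$, so the homology functor acts as the identity on $X$: as a parametrised chain complex $X$ equals $HX$, and likewise $Y=HY$. Moreover, for two such objects a degreewise isomorphism of parametrised graded vector spaces is exactly the datum of an isomorphism in $\tame$. With this in hand, part 1 is easy: the forward implication is immediate, and for the converse a weak equivalence between $X$ and $Y$ is a zigzag of weak equivalences, which homology turns into a zigzag of isomorphisms whose composite is an isomorphism $HX\cong HY$; since $X=HX$ and $Y=HY$ this is an isomorphism $X\cong Y$. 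Part 2 then follows formally: writing $X'$, $Y'$ for the minimal representatives, Betti diagrams are complete invariants of cofibrant objects (Theorem~\ref{dec_theorem}), so $\beta_n^{\min}X=\beta_n^{\min}Y$ is equivalent to $X'\cong Y'$, which by Proposition~\ref{min_rep_unique} is equivalent to $X$ and $Y$ being weakly equivalent, which by part 1 is equivalent to $X\cong Y$.

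The substantive claim is part 3, that $\text{cov}(X)\cong X'$; part 4 is then immediate, since it gives $\beta_n\text{cov}(X)=\beta_nX'$. By the corollary preceding this proposition, $\text{cov}(X)$ and $X'$ already have equal Betti diagrams off the diagonal, and $X'$ being minimal has $\beta_nX'(s,s)=0$ (Proposition~\ref{adfgsdfhgs}.1); hence by Theorem~\ref{dec_theorem} it suffices to prove $\beta_n\text{cov}(X)(s,s)=0$, i.e. that the minimal cover carries no summand of the form $\intS{s,s}$. My strategy is to exhibit one concrete minimal cover without diagonal summands and then invoke uniqueness (Proposition~\ref{min_fact_unique}).

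I would take the minimal representative itself. By Theorem~\ref{dec_theorem} and Proposition~\ref{adfgsdfhgs}.1 it decomposes as $X'\cong\bigoplus_i\intS[n_i]{s_i,e_i}$ with $s_i<e_i\le\infty$, and the isomorphism $HX'\cong HX=X$ exhibits each summand's homology $H\intS[n_i]{s_i,e_i}$ as an interval submodule of $X_{n_i}$. For each such bar I choose the cycle $x\in Z_nX^s=X_n^s$ representing its generator and define $c\colon X'\to X$ summandwise by the morphisms $I(x)$ and $I(x,0)$ of \ref{point_morphisms_out_interval_sphere_1} and \ref{point_morphisms_out_interval_sphere_2} (the choice $y=0$ is legitimate because $\delta=0$ and the bar dies at $e$, so $X_n^{s\le e}(x)=0$). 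On homology $c$ realises the direct sum of the bar inclusions and is therefore an isomorphism, so $c$ is a weak equivalence. The point requiring care is that $c$ is a fibration: at each parameter $t$ and each degree $m\ge1$ the summands coming from degree-$m$ bars alive at $t$ map onto a basis of $X_m^t$, while every contribution from a dead bar — both the $D^{m+1}$-tops of degree-$m$ bars and the $D^{m}$-tops coming from degree-$(m-1)$ bars — maps to $0$; hence $c^t_m$ is surjective and $c$ is a trivial fibration from the cofibrant object $X'$.

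Finally, since $X'$ is minimal it has no $\intS{s,s}$ summand, so a fortiori none lies in the kernel of $c$; by Proposition~\ref{adfgsdfhgs}.2, $c$ is a minimal cover of $X$, and uniqueness of minimal covers (Proposition~\ref{min_fact_unique}) yields $\text{cov}(X)\cong X'$, proving part 3 and with it part 4. The hard part will be the fibration check for $c$: it forces one to bookkeep, in each homological degree, the contributions of living versus dying bars and to confirm that the disk-tops (the $y$-components, which we set to $0$) contribute nothing, so that surjectivity is supplied entirely by the living bars — together with the upstream step of pinning down that the minimal representative is exactly the barcode of $X=HX$ carried by off-diagonal interval spheres.
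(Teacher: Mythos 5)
Your proof is correct, and for parts 1, 2 and 4 it matches the paper's (part 1 via $X=HX$, $Y=HY$; part 2 formally from part 1; part 4 from part 3). For part 3 your skeleton is also the paper's -- exhibit a weak equivalence from the minimal representative $X'$ to $X$ that is simultaneously a fibration, conclude via Proposition~\ref{adfgsdfhgs}.2 and uniqueness that it is the minimal cover -- but you do substantially more work than needed. The paper's proof is one line: take \emph{any} weak equivalence $f\colon X'\to X$ (which exists because $X'$ is cofibrant, everything in $\ch$ is fibrant, and they are weakly equivalent), and observe that it is automatically a fibration, since $Hf$ surjective together with $B_nX=0$ forces each $f^t_n$ to be surjective (every element of $X^t_n=H_nX^t$ is the image of a cycle of $X'^t$ up to a boundary of $X^t$, and there are no boundaries). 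Your route instead builds a specific $f$ summandwise from the interval decomposition of $X=HX$ via the morphisms $I(x)$ and $I(x,0)$ and verifies surjectivity by tracking living bars against disk-tops; the bookkeeping you flag as "the hard part" all checks out (the choice $y=0$ is legitimate precisely because $x$ generates an interval summand, so $X_n^{s\le e}(x)=0$), but it is rendered unnecessary by the observation above. Likewise the preliminary reduction through the corollary and the vanishing of $\beta_n\text{cov}(X)(s,s)$ is a detour: once you have a minimal cover isomorphic to $X'$, part 3 is already proved. What your version buys is explicitness -- a concrete formula for the minimal cover in terms of the barcode -- at the cost of length; what the paper's buys is that the fibration property needs no case analysis at all.
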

\begin{proof}
	Statement {\it 1} is a consequence of the fact that $X$ and $Y$ are isomorphic to their respective homologies. 
	Statement {\it 2} follows from statement {\it 1}. 
	To show statement {\it 3}, choose  a minimal representative $X'$ of $X$ and a weak equivalence $f\colon X'\to X$. 
	Since $X$ is isomorphic to its homology, $f$ is a fibration and hence it is also a minimal cover of $X$. 
	Finally  statement {\it 4} is a consequence of  statement {\it 1}. 
\end{proof}

\begin{point}\label{asdfdfhdfn}
	{\em Tame $\posr$-parametrised vector spaces.}
	We regard {\bf tame $\posr$-parametrised vector spaces}, also known as persistence modules, as objects in $\tame$ whose values are concentrated only in degree $0$ for all parameters $t$ in $\posr$ (see \ref{point_graded_vect}). 
	Such objects in $\tame$ satisfy the assumption of Proposition~\ref{afdhggfbns} and hence their isomorphism types are uniquely determined by their Betti diagrams. 
	Furthermore, minimal representatives and minimal covers of such objects coincide. 
	It follows that for a tame 
	$\posr$-parametrised 
	vector space $X$, we have $\beta_nX(s,s)=0$ for all $n$ and $s$ (see Proposition~\ref{adfgsdfhgs}). Since homology in positive degrees of $X$ is trivial, we also get  
	$\beta_nX=0$ for all $n>0$. 
	Thus, the isomorphism type of $X$ is uniquely determined by its $0$-th Betti diagram $\beta_0X\colon \Omega \to \{0,1,\ldots\}$. 
	In this case $\beta_0X$ coincides with the usual persistence diagram of $X$ \cite{edel_harer}.
\end{point}

\section{Betti diagrams of morphisms}
In this section we explain various ways of   assigning  Betti diagrams to a morphism $g\colon X\to Y$ in $\tame$. 

\begin{point}{\em Minimal factorisations.}\label{adgssgfhjdghj}
	If $g\colon X\to Y$ is a cofibration, then the quotient $Y/g(X)$ is cofibrant and we can take its Betti diagrams $\beta_n\left(Y/g(X)\right)$.
	If $g\colon X\to Y$ is not a cofibration, we can consider   its minimal factorisation (see Definition~\ref{def_minimal_morphism}):
	\[\begin{tikzcd}[row sep=3pt]
	& A\ar[two heads, bend left=10]{dr}{\beta}[swap]{\sim}\\
	X\ar{rr}[description]{g} \ar[hook,bend left=10]{ur}{\alpha} & & Y
	\end{tikzcd}\]
	and assign to $g$ the Betti diagrams $\beta_n\left(A/\alpha(X)\right)$ of the quotient $A/\alpha(X)$.
	These Betti diagrams are invariants of the isomorphism type of $g$, and in the case $X=0$  recover  the Betti diagrams of $Y$ discussed in Section~\ref{section_Betti}.
\end{point}

\begin{point}\label{adfadfghsfgh}{\em The cover of the cofiber.}
	Instead of taking the minimal factorisation of $g$, we can  apply the cofiber construction (see \ref{point_cofiber}) parameterwise, to  obtain an exact sequence in $\tame$:
	\[\begin{tikzcd} 0\ar{r} &Y\ar{r}{i} & Cg\ar{r}{p} & SX\ar{r}& 0\end{tikzcd}\]
	We can then assign to $g$ the Betti diagrams $\beta_n\text{cov}(Cg)$ of the  minimal cover $\text{cov}(Cg)$ of the cofiber $Cg$.
	The  diagrams $\beta_n\text{cov}(Cg)$  depend on the isomorphism type of $g$, and as before, in the case $X=0$, recover the Betti diagrams of $Y$ discussed in Section~\ref{section_Betti}.
\end{point}

\begin{point}\label{adfgdfghs}{\em The cofiber of the covers.}
	We can extract a cofibrant object out of $g\colon X\to Y$ yet  in another way.
	Use axiom \ref{MC4} to choose a morphism $g'$ that fits into the following commutative square, where the vertical morphisms denote  the minimal covers:
	\[\begin{tikzcd}[row sep=13pt]
	\text{cov}(X)\ar[two heads]{d}[swap]{c_X}{\sim}\ar{r}{g'} & \text{cov}(Y)\ar[two heads]{d}{c_Y}[swap]{\sim}\\
	X\ar{r}{g} & Y
	\end{tikzcd}\]
	Since $\text{cov}(X)$ and $\text{cov}(Y)$ are cofibrant, then so is the cofiber $Cg'$, and  hence we can take its Betti diagrams
	$\beta_nCg'$. 
	Although in this construction we made a choice of $g'$, the obtained  Betti diagrams do not depend on it and hence provide   invariants of the isomorphism type of $g$.
	To prove this independence, consider  the morphism $C(c_X,c_Y)\colon Cg'\to Cg$, induced by the commutativity of the square above (see~\ref{point_cofiber}). 
	It fits into the following  commutative diagram with exact rows, where the indicated morphisms are weak equivalences, cofibrations, and fibrations:
	
	\[\begin{tikzcd}[row sep=13pt]
	0\ar{r} &  \text{cov}(Y)\ar[two heads]{d}[swap]{c_Y}{\sim}\ar[hook]{r}{i} & Cg'\ar[two heads]{d}{C(c_X,c_Y)}[swap]{\sim}\ar[two heads]{r}{p} & S\text{cov}(X)\ar[two heads]{d}{Sc_X}[swap]{\sim} \ar{r} & 0\\
	0\ar{r} & Y\ar{r}{i} & Cg \ar[two heads]{r}{p}  & SX \ar{r} & 0
	\end{tikzcd}\]
\end{point}
\begin{proposition}\label{adfadfghsd}
	The following  factorisation is minimal:
	\[\begin{tikzcd}[row sep=2pt]
	& Cg'\ar[two heads, bend left=10]{dr}{C(c_X,c_Y)}[swap]{\sim}\\
	\text{\rm cov}(Y)\ar{rr}[description]{i c_Y} \ar[hook,bend left=10]{ur}{i}& & Cg
	\end{tikzcd}\]
	Furthermore, if $X$ is cofibrant, then $C(c_X,c_Y)\colon Cg'\to Cg$ is a minimal cover.
\end{proposition}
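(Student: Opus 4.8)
The plan is to verify the three defining conditions of a minimal factorisation (Definition~\ref{def_minimal_morphism}) for $\text{cov}(Y)\xrightarrow{i}Cg'\xrightarrow{C(c_X,c_Y)}Cg$, and then to read off the ``furthermore'' from Proposition~\ref{adfgsdfhgs}.2. The two easy conditions come for free: the cokernel of $i$ is $S\,\text{cov}(X)$, which is cofibrant because $\text{cov}(X)$ is and suspension preserves cofibrant objects (see \ref{point_suspension}), so $i$ is a cofibration by Proposition~\ref{char_cofibrations}.2; and $C(c_X,c_Y)$ is a fibration and a weak equivalence by the commutative ladder of \ref{adfgdfghs}. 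Before attacking minimality I would record one preliminary fact. Since $S\intS[n]{s,e}\cong\intS[n+1]{s,e}$, the suspension is an exact additive endofunctor restricting to a bijection between the $\intS{s,s}$-type summands of a cofibrant object and those of its suspension; applying this to $\ker(Sc_X)=S\ker(c_X)$, the fact that $c_X$ is a minimal cover (no $\intS{s,s}$-summand of $\text{cov}(X)$ lies in $\ker c_X$, by Proposition~\ref{adfgsdfhgs}.2) yields that no $\intS{s,s}$-summand of $S\,\text{cov}(X)$ lies in $\ker(Sc_X)$.

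For the minimality condition, let $\phi\colon Cg'\to Cg'$ satisfy $\phi i=i$ and $C(c_X,c_Y)\phi=C(c_X,c_Y)$. As $C(c_X,c_Y)$ is a weak equivalence, axiom \ref{MC2} makes $\phi$ one too. I would then reuse the idempotent argument from the proof of Proposition~\ref{adfgsdfhgs}: since every object of $\tame$ is compact, some power gives $Cg'\cong\Img(\phi^l)\oplus\ker(\phi^l)$ with $\ker(\phi^l)\to 0$ a weak equivalence, so $\ker(\phi^l)$ is a sum of interval spheres of type $\intS{s,s}$ by Theorem~\ref{dec_theorem}. The equality $\phi^l i=i$ forces the $\ker(\phi^l)$-component of $i$ to vanish, so $i$ lands in $\Img(\phi^l)$; hence the projection onto $\ker(\phi^l)$ annihilates $\Img(\phi^l)\supseteq i(\text{cov}(Y))=\ker p$ and factors through $p\colon Cg'\to S\,\text{cov}(X)$, making $p|_{\ker(\phi^l)}$ a split monomorphism and $P:=p(\ker(\phi^l))$ an $\intS{s,s}$-summand of $S\,\text{cov}(X)$. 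On the other hand $C(c_X,c_Y)\phi^l=C(c_X,c_Y)$ kills $\ker(\phi^l)$, and the right-hand square $p_{Cg}\,C(c_X,c_Y)=Sc_X\,p$ then forces $P\subseteq\ker(Sc_X)$. By the preliminary fact $P=0$, whence $\ker(\phi^l)=0$ and $\phi$ is an isomorphism.

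For the ``furthermore'', assume $X$ cofibrant. Then $\id_X$ is already a minimal cover (its minimality condition is vacuous), so by uniqueness (Proposition~\ref{min_fact_unique}) I may take $c_X=\id$, whence $Sc_X=\id$ and $\ker(Sc_X)=0$. I would then check the criterion of Proposition~\ref{adfgsdfhgs}.2: $Cg'$ is cofibrant as the cofiber of a map of cofibrant objects (see \ref{adfgdfghs}) and $C(c_X,c_Y)$ is a fibration and weak equivalence, so it remains to rule out an $\intS{s,s}$-summand $D$ of $Cg'$ with $C(c_X,c_Y)|_D=0$. For such a $D$ the right square gives $p(D)\subseteq\ker(\id_{SX})=0$, so $D\subseteq i(\text{cov}(Y))$; since $D$ is a summand of $Cg'$ and $i$ is a monomorphism, the inclusion factors as a split monomorphism $d\colon D\to\text{cov}(Y)$, and the left square $C(c_X,c_Y)\,i=i_{Cg}\,c_Y$ forces $c_Y d=0$. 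Thus $d(D)$ is an $\intS{s,s}$-summand of $\text{cov}(Y)$ inside $\ker c_Y$, contradicting that $c_Y$ is a minimal cover unless $D=0$. This verifies the criterion and shows $C(c_X,c_Y)$ is a minimal cover.

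The main obstacle, and the step I would spend the most care on, is the transfer of the diagonal summand $\ker(\phi^l)$ across the cofiber sequence and the suspension: making precise that $p$ embeds $\ker(\phi^l)$ as a genuine direct summand of $S\,\text{cov}(X)$, and that ``no $\intS{s,s}$-summand in $\ker(Sc_X)$'' is equivalent, via the exactness and summand-bijection of $S$, to the minimal-cover property of $c_X$. The reuse of the compactness/idempotent decomposition from Proposition~\ref{adfgsdfhgs} is the engine, but the Krull--Schmidt bookkeeping of Theorem~\ref{dec_theorem}.2 needed to identify the relevant summands is where the argument must be watertight.
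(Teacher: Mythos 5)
Your proof is correct, and the ``furthermore'' half is essentially the paper's argument (you use $Sc_X=\id$ to place a diagonal summand of $\ker C(c_X,c_Y)$ inside $\ker p=i(\mathrm{cov}(Y))$, where the paper uses the pullback property of the left square to identify $\ker C(c_X,c_Y)$ with $\ker c_Y$; these are the same observation). For the minimality of the factorisation, however, you take a genuinely different route. The paper notes that a self-equivalence $f$ of the factorisation induces a self weak equivalence $\bar f$ of the quotient $S\,\mathrm{cov}(X)$ satisfying $Sc_X\bar f=Sc_X$, invokes the defining property of the minimal cover $Sc_X$ to conclude $\bar f$ is an isomorphism, and then deduces that $f$ is one (five lemma on the cofiber sequence). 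You instead apply the Fitting/compactness splitting from the proof of Proposition~\ref{adfgsdfhgs} directly to $\phi$ on $Cg'$, show that $\ker(\phi^l)$ is carried by $p$ isomorphically onto a direct summand of $S\,\mathrm{cov}(X)$ consisting of interval spheres $\intS{s,s}$ lying in $\ker(Sc_X)$, and kill it by the ``no diagonal summand in the kernel'' criterion. Both arguments ultimately rest on $Sc_X$ being a minimal cover --- a fact the paper asserts without comment and which you, more carefully, reduce to the summand bijection $S\intS{s,e}\cong\intS[n+1]{s,e}$ together with $\ker(Sc_X)=S\ker(c_X)$. The paper's version is shorter and stays at the level of the quotient; yours is more computational but self-contained in that it never needs the five lemma to pass from an isomorphism on the quotient back to $\phi$, concluding $\ker(\phi^l)=0$ directly. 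The one point worth making fully explicit in your write-up is that every $\intS{s,s}$-summand of $S\,\mathrm{cov}(X)$ necessarily has $n\geq 1$ (since $(S\,\mathrm{cov}(X))_0=0$), so the desuspension really does match it with a diagonal summand of $\mathrm{cov}(X)$; with that, the ``preliminary fact'' and hence the whole argument is watertight.
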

Since the morphism $ic_Y\colon \text{cov}(Y)\to Cg$ does not depend on $g'$, neither does its minimal factorisation. 
Therefore, Proposition~\ref{adfadfghsd} implies that the isomorphism type of $Cg'$ does not depend on the choice of $g'$ and consequently neither $\beta_nCg'$.

\begin{proof}[Proof of Proposition~\ref{adfadfghsd}]
	Consider a self equivalence $f\colon Cg'\to Cg'$ of the factorisation. It fits into the following commutative diagram:
	\[
	\begin{tikzcd}[column sep=8, row sep=10]
	& & \text{cov}(Y)\ar[two heads,bend left=15]{dddl}[pos=0.6]{c_Y}[swap, pos=0.8]{\sim}\ar[hook]{rrr}[pos=0.2]{i}& & & Cg'\ar[two heads,bend left=15]{dddl}[pos=0.6]{C(c_X,c_Y)}[swap, pos=0.8]{\sim}\ar{dll}[swap]{f}\ar[two heads]{rrr}{p}
	& & & S\text{cov}(X)\ar[two heads,bend left=15]{dddl}{Sc_X}[swap]{\sim}\ar{dll}
	\\
	\text{cov}(Y)\ar[equal]{rru}
	\ar[two heads,bend right=10]{ddr}{\sim}[swap]{c_Y} 
	\ar[crossing over,hook]{rrr}[pos=0.3]{i}
	& & & Cg'\ar[two heads,bend right=10]{ddr}{\sim}[swap, pos=0.3]{C(c_X,c_Y)}
	\ar[crossing over,two heads]{rrr}[pos=0.3]{p}
	& & & S\text{cov}(X)
	\ar[two heads,bend right=10]{ddr}{\sim}[swap]{Sc_X}
	\\
	\\
	& Y \ar[hook]{rrr}[pos=0.2]{i}& & & Cg\ar[two heads]{rrr}{i} & & & SX
	\end{tikzcd}
	\]
	The induced morphism $S\text{cov}(X)\to S\text{cov}(X)$ is then a weak equivalence and hence an isomorphism as 
	$Sc_X\colon S\text{cov}(X)\to SX$ is the minimal cover.  The morphism $f$ is therefore an isomorphism as well.
	
	Assume $X$ is cofibrant. We are going to use the criteria from Proposition~\ref{adfgsdfhgs}
	to argue that in this case $C(c_X,c_Y)$ is a minimal cover.
	Since $\text{cov}(X)=X$, the following square is a pullback:
	\[\begin{tikzcd}[row sep=13pt]
	\text{cov}(Y)\ar[hook]{r}{i}\ar[two heads]{d}[swap]{c_Y}{\sim} & Cg'\ar[two heads]{d}{C(c_X,c_Y)}[swap]{\sim}\\
	Y\ar{r}{i} & Cg
	\end{tikzcd}\]
	It follows that the kernel of $C(c_X,c_Y)$ coincide with the kernel of $c_Y$. 
	Consider a direct summand  of $Cg'$ of the form   $\intS{s,s}$, for some $n$ and $s$, that belongs to the kernel of 
	$C(c_X,c_Y)$. 
	Then this summand belongs also to the kernel of $c_Y$. 
	In particular, it is included in $\text{cov}(Y)$ and hence it is
	also a direct summand of $\text{cov}(Y)$. 
	That contradicts the criteria from Proposition~\ref{adfgsdfhgs}  applied to $c_Y$.
	We can conclude that such summands do not exist and hence, according to the same criteria, $C(c_X,c_Y)$ is a minimal cover.
\end{proof}

With a morphism $g\colon X\to Y$ in $\tame$,  we have associated four cofibrant objects: $A/\alpha(X)$ (see~\ref{adgssgfhjdghj}), 
$\text{cov}(Cg)$ (see~\ref{adfadfghsfgh}), $Cg'$ (see~\ref{adfgdfghs}), and the minimal representative of $\text{cov}(Cg)$. 
These cofibrant objects lead to Betti diagrams  $\beta_n\left(A/\alpha(X)\right)$, $\beta_n\text{cov}(Cg)$, $\beta_nCg'$, and $\beta_n^{\text{min}}\text{cov}(Cg)$. Since all these cofibrant objects are weakly equivalent to each other, all these Betti diagrams agree for all  $(s,e)$ in $\Omega$ such that $s<e$.  
They may have different values only on the diagonal $\Delta\subset \Omega$. 

\begin{point}{\em Commutative ladders.}
	A {\bf commutative ladder} is by definition an object in $\tame$ whose values at all parameters are chain complexes which
	are non trivial only in degrees zero and one. 
	For example, $\intS[0]{s,s}$ is a commutative ladder. 
	Similarly, so is the Kan extension of
	$\D[1]\to 0$ with respect to a sequence $s<e$ of elements in $\posr$ (see \ref{point_kan_extension}).
	A tame $\posr$-parametrised vector space is also an example of a commutative ladder. 
	In general, however, in contrast to tame $\posr$-parametrised vector spaces,
	the minimal Betti diagrams of a commutative ladder can fail to be equal to its Betti diagrams. 
	For example, the minimal representative
	of $\intS[0]{s,s}$ is trivial, however its minimal cover is $\intS[0]{s,s}$. 
	Furthermore, again in contrast to tame vector spaces, Betti diagrams are not complete invariants of commutative ladders. 
	For example, $\intS[0]{s,s}$ and the Kan extension of
	$\D[1]\to 0$ with respect to a sequence $s<e$ of elements in $\posr$ have isomorphic minimal covers and hence same Betti diagrams. 
	
	If $f\colon X\to Y$ is a morphism of  tame $\posr$-parametrised vector spaces, then its cofiber
	$Cf$ is a commutative ladder. Any  commutative ladder $Z$ is the cofiber of its differential $\delta\colon  Z_1\to Z_0$,
	where we regard  $Z_1$ and $Z_0$ as $\posr$-parametrised vector spaces.  
	Thus, we can extract from $Z$ various Betti diagrams assigned to the morphism $\delta\colon  Z_1\to Z_0$.
	For example, we can consider the Betti diagrams of the quotient of the cofibration in the minimal factorisation of the differential $\delta\colon Z_1\to Z_0$ (see \ref{adgssgfhjdghj}). 
	We can also apply to $\delta$ the procedure described in \ref{adfgdfghs}
	to obtain another sequence of Betti diagrams. 
	Thus, a commutative ladder leads to four sequences of Betti diagrams. 
	These Betti diagrams are not arbitrary.  
	Let $\beta_n\colon\Omega\to\{0,1,\ldots\}$ be any of these Betti diagrams extracted from a commutative ladder.
	Since for $n\geq 1$, there are no non-trivial morphisms from the interval sphere $\intS{s,s}$ into any commutative ladder, $\beta_n(s,s)=0$ for $n\geq 1$ and $s$ in $[0,\infty)$. 
	As values of commutative ladders have no homology in degrees strictly bigger than $1$, we then also get $\beta_n=0$, for $n>1$.
\end{point}

\section{Zigzags}\label{section_zigzags}

\begin{point}{\em Discrete zigzags.}
	Throughout this section, $k$ is assumed to be a positive natural number.
	Elements of the set $\{r,l\}$ are called directions, $r$ stands for right and $l$ for left.
	Let  $c=(c_1,\ldots,c_k)$  be a sequence of directions i.e., elements  of  $\{r,l\}$. 
	Such a sequence determines a poset structure "$\to $" on $\{0,1,\ldots,k\}$ where, for $a<b$ in $\{0,1,\ldots,k\}$,  $a\leftarrow b$ if $c_{a+1}=\cdots=c_b=l$, and $a\to b$ if $c_{a+1}=\cdots=c_b=r$. 
	This poset is denoted by $[k]_c$ and the sequence $c$ is called its {\bf profile}. 
	A profile $c$ consisting  of only $r$'s is called standard and the induced poset structure on $\{0,1,\ldots,k\}$ is denoted by $[k]$. Here are graphical illustrations of   $[4]_c$ for 3 different profiles:
	\[0\leftarrow 1\leftarrow 2\rightarrow 3\rightarrow 4
	\ \ \ \ \ 
	0\rightarrow1\rightarrow 2\rightarrow 3\rightarrow 4
	\ \ \ \ \ 
	0\leftarrow1\rightarrow 2\leftarrow 3\rightarrow 4
	\]	
	
	To define a functor $X\colon [k]_c\to  \text{Ch}$, the following needs to be specified:
	\begin{itemize} 
		\item $k+1$ chain complexes $X^a$ for every $a$ in $\{0,1,\ldots,k\}$,
		\item  $k$ chain morphisms:  $X^{a-1\to {a}}\colon X^{a-1}\to X^{a}$ for every $a$ such that $c_a=r$, and $X^{a\to a-1} \colon X^{a}\to X^{a-1}$
		for every $a$ such that $c_a=l$.
	\end{itemize}
	
	A  \textbf{discrete zigzag} is by definition a functor of the form $X\colon [k]_c\to \text{Ch}$ for some $k$ and some profile $c$.
\end{point}

\begin{point}{\em Straightening zigzags.}
	Choose a profile $c=(c_1,\ldots,c_k)$. 
	For $a$ in $\{0,1,\ldots,k\}$, define its weight $w_a$ to be the size of the set $\{k\ |\  k\leq a\text{ and } c_k=l\}$.  The weight of $a$ is the number of $l$ directions in $c$ whose indexes are not bigger than $a$. 
	Thus, $w_0=0$, and $w_1=1$ if and only if $c_1=l$.
	
	In this paragraph we are going to explain how to convert a zigzag $X\colon [k]_c\to  \text{Ch}$, indexed by the poset $[k]_c$, into 
	a functor  $\overline{X}\colon [k]\to  \text{Ch}$ indexed by the standard poset $[k]$.
	For $a$ in $\{0,1,\ldots,k\}$ define:
	
	\[\overline{X}^{a}=\begin{cases}
	S^{w_a}X^{a} &\text{ if } a<k \text{ and } c_{a+1}=r\\
	S^{w_a}CX^{a+1\to a}& \text{ if }a<k \text{ and } c_{a+1}=l\\
	S^{w_k}X^k &\text{ if }a=k
	\end{cases}\]
	Thus, for $a<k$, the value $\overline{X}^{a}$ depends on the direction $c_{a+1}$. If $c_{a+1}=r$, then $\overline{X}^{a}$
	is the  $w_a$ suspension of $X^{a}$. If  $c_{a+1}=l$, then $\overline{X}^{a}$ is the $w_a$ suspension of the cofiber 
	$CX^{a+1\to a}$ (see \ref{point_cofiber}).
	
	Next we are going to define morphisms $\overline{X}^{a-1<a}\colon \overline{X}^{a-1}\to \overline{X}^{a}$ for every $a $ in $\{1,\ldots,k\}$.
	These morphisms depend on the directions $c_a$ and $c_{a+1}$ for $a<k$, and $c_k$ for $a=k$, and are defined as follows, using the morphisms  $i$ and $p$ as described  in \ref{point_cofiber}:
	\begin{itemize}
		\item Assume  either $a<k$,  $c_a=r$ and $c_{a+1}=r$, or  $a=k$ and $c_k=r$.  Then:
		\begin{itemize}
			\item  $w_{a}=w_{a-1}$,
			\item  $\overline{X}^{a-1}=S^{w_{a-1}}X^{a-1}=S^{w_{a}}X^{a-1}$,
			\item $ \overline{X}^{a}=S^{w_{a}}X^{a}$.
		\end{itemize} 
		The morphism  $\overline{X}^{a-1<a}\colon \overline{X}^{a-1}\to \overline{X}^{a}$ is set to be:
		\[
		\begin{tikzcd}[column sep=60pt, row sep=7pt]
		\overline{X}^{a-1}\ar[equal]{d}\ar[bend left=5]{r}{\overline{X}^{a-1<a}} &\overline{X}^{a}\ar[equal]{d} \\
		S^{w_{a}}X^{a-1}   \ar{r}{ S^{w_a}X^{a-1\to a}}  &S^{w_{a}}X^{a}
		\end{tikzcd}\]
		\item If $c_a=r$ and $c_{a+1}=l$, then:
		\begin{itemize}
			\item  $w_{a}=w_{a-1}$,
			\item $\overline{X}^{a-1}=S^{w_{a-1}}X^{a-1}=S^{w_{a}}X^{a-1}$,
			\item $\overline{X}^{a}=S^{w_{a}}CX^{a+1\to a}$.
		\end{itemize} 
		The morphism  $\overline{X}^{a-1<a}\colon \overline{X}^{a-1}\to \overline{X}^{a}$ is set to be the composition:
		\[\begin{tikzcd}[column sep=30pt, row sep=7pt]
		\overline{X}^{a-1}\ar[equal]{d}\ar[bend left=5]{rrr}{\overline{X}^{a-1<a}} & & &\overline{X}^{a}\ar[equal]{d} \\
		S^{w_{a}}X^{a-1}\ar{rr}{S^{w_a}X^{a-1\to a}} & &S^{w_{a}}X^{a}\ar[hook]{r}{S^{w_{a}}i}  & S^{w_{a}}CX^{a+1\to a}
		\end{tikzcd}\]
		\item Assume  either $a<k$,  $c_a=l$ and $c_{a+1}=r$, or  $a=k$ and $c_k=l$.  Then:
		\begin{itemize}
			\item  $w_{a}=w_{a-1}+1$,
			\item $\overline{X}^{a-1}=S^{w_{a-1}}CX^{a\to a-1}$,
			\item $\overline{X}^{a}=S^{w_{a}}X^{a}=S^{w_{a-1}}SX^{a}$.
		\end{itemize}
		The morphism  $\overline{X}^{a-1<a}\colon \overline{X}^{a-1}\to \overline{X}^{a}$ is set to be:
		\[\begin{tikzcd}[column sep=8pt, row sep=7pt]
		\overline{X}^{a-1}\ar[equal]{d}\ar[bend left=5]{rrrrr}{\overline{X}^{a-1<a}} &&& & &\overline{X}^{a}\ar[equal]{d} \\
		S^{w_{a-1}}CX^{a\to a-1} \ar[two heads]{rrrr}{S^{w_{a-1}}p} && &&S^{w_{a-1}}SX^{a}\ar[equal]{r} &S^{w_{a}}X^{a}
		\end{tikzcd}\]
		\item If $c_a=l$ and $c_{a+1}=l$, then:
		\begin{itemize}
			\item  $w_{a}=w_{a-1}+1$,
			\item  $\overline{X}^{a-1}=S^{w_{a-1}}CX^{a\to a-1}$,
			\item  $\overline{X}^{a}=S^{w_{a}}CX^{a+1\to a}=S^{w_{a-1}}SCX^{a+1\to a}$.
		\end{itemize}
		The morphism  $\overline{X}^{a-1<a}\colon \overline{X}^{a-1}\to \overline{X}^{a}$ is set to be the composition:
		\[\begin{tikzcd}[column sep=35pt, row sep=7pt]
		\overline{X}^{a-1}\ar[equal]{d}\ar[bend left=5]{rr}{\overline{X}^{a-1<a}} &&\overline{X}^{a}\ar[equal]{d} \\
		S^{w_{a-1}}CX^{a\to a-1} \ar[two heads]{r}{S^{w_{a-1}}p} & S^{w_{a-1}}SX^{a}=S^{w_{a}}X^{a}\ar[hook]{r}{S^{w_{a}}i} & S^{w_{a}}CX^{a+1\to a}
		\end{tikzcd}\]
	\end{itemize}
	
	For example, consider the following discrete zigzags of chain complexes:
	\[X=(\begin{tikzcd}
	X_0 &X_1\ar{l}[swap]{f_1} &X_2\ar{l}[swap]{f_2} \ar{r}{f_3} &X_3 \ar{r}{f_4} & X_4
	\end{tikzcd})\]
	\[Y=(\begin{tikzcd}
	Y_0\ar{r}{g_1}  &Y_1\ar{r}{g_2} &Y_2 &Y_3 \ar{l}[swap]{g_3} & Y_4\ar{l}[swap]{g_4}
	\end{tikzcd})\]
	Then:
	\[\overline{X}=\Big(\begin{tikzcd}[column sep=12pt, row sep =3pt]
	Cf_1\ar{rr}\ar[two heads,bend right=15]{dr}{p} & & SCf_2\ar[two heads]{rr}{p}&& S^2X_2 \ar{rr}{S^2f_3}&& S^2X_3 \ar{rr}{S^2f_4} &&  S^2X_4\\
	& SX_1\ar[hook,bend right=15]{ur}{Si}
	\end{tikzcd}\Big)\]
	\[\overline{Y}=\Big(\begin{tikzcd}[column sep=12pt, row sep =3pt]
	Y_0\ar{rr}{g_1}  && Y_1\ar{rr} \ar[bend right=15]{dr}{g_2} && Cg_3\ar{rr}\ar[two heads,bend right=15]{dr}{p} & & SCg_4 \ar[two heads]{rr}{Sp} & & S^2Y_4\\
	& & & Y_2\ar[hook,bend right=15]{ur}{i} & &SY_3\ar[hook,bend right=15]{ur}{Si} &
	\end{tikzcd}\Big)\]
\end{point}
\begin{point}{\em Morphisms between straightened zigzags.}
	Consider a natural transformation $f \colon X\to Y$  between two discrete zigzags $X,Y\colon [k]_c\to \Ch$. 
	It is a sequence of morphisms $f=\{f^a\colon X^a\to Y^a\}_{0\leq a\leq k}$ for which the following squares commute
	for all $a$ in $\{1,\ldots,k\}$:
	\[
	\begin{array}{c|c}
	\text{ if }c_a=r &  \text{ if } c_a=l\\
	\hline
	\begin{tikzcd}[column sep=45pt, row sep=13pt]
	X^{a-1}\ar{r}{X^{a-1\to a}} \ar{d}[swap]{f^{a-1}}& X^a \ar{d}{f^a}\\
	Y^{a-1}\ar{r}{Y^{a-1\to a}} &Y^{a}
	\end{tikzcd}
	&
	\begin{tikzcd}[column sep=45pt, row sep=13pt]
	X^{a-1}\ \ar{d}[swap]{f^{a-1}}& X^a \ar{d}{f^a} \ar{l}[swap]{X^{a\to a-1}}\\
	Y^{a-1}\ &Y^{a}\ar{l}[swap]{Y^{a\to a-1}}
	\end{tikzcd}
	\end{array}
	\]
	
	The following morphisms form a natural transformation denoted by $\overline{f}\colon\overline{X}\to \overline{Y}$:
	\[
	\overline{f}^a\colon \overline{X}^{a}\to \overline{Y}^a=
	\begin{cases}
	S^{w_a}f^a &\text{ if } a<m\text{ and } c_{a+1}=r
	\\
	S^{w_a}C(f^a)& \text{ if }a<m \text{ and } c_{a+1}=l
	\\
	S^{w_k}f^k  &\text{ if }a=k
	\end{cases}
	\]
	
	The association $f\mapsto \overline{f}$ defines an additive  functor. This functor is faithful i.e., it is injective on the set of morphisms. Furthermore, it commutes with direct sums, since taking suspensions and cofiber sequences commute with direct sums.
	In general, however, this functor fails to be full i.e., surjective on the set of morphisms. 
	To understand this failure we enumerate all  natural transformations of the form $\overline{X}\to \overline{Y}$, using the following algorithm.
	Choose an arbitrary natural transformations  $g\colon\overline{X}\to \overline{Y}$. 
	
	\begin{itemize}
		\item Let $g^k\colon S^{w_k}X^k\to S^{w_k}Y^k$ be the $k$-th component of $g$. 
		Define $\widehat{g}^k\colon X^k\to Y^k$ to be  $S^{-w_k}g^k$ (see \ref{point_suspension}).
	\end{itemize}
	For $1\leq a \leq k-1$,
	\begin{itemize}
		\item Assume $c_{a+1}=r$. 
		Let $g^{a}\colon S^{w_{a}}X^{a}\to S^{w_{a}}Y^{a}$ be the $a$-th component of $g$.
		Define $\widehat{g}^{a}\colon X^{a}\to Y^{a}$ to be  $S^{-w_{a}}g^{a}$.
		Since $g$ is a natural transformation, the following square commutes:
		\[
		\begin{tikzcd}[column sep=45pt, row sep=13pt]
		X^{a}\ar{r}{X^{a\to a+1}}\ar{d}[swap]{\widehat{g}^{a}} 
		& X^{a+1}\ar{d}{\widehat{g}^{a+1}} 
		\\
		Y^{a}\ar{r}{Y^{a\to a+1}} 
		& Y^{a+1}
		\end{tikzcd}
		\]
		\item Assume $c_{a+1}=l$. 
		Let 
		$g^{a}\colon S^{w_{a}}CX^{a+1\to a}\to S^{w_{a}}CY^{a+1\to a}$ 
		be the $a$-th component of $g$. 
		Recall the arguments in \ref{point_cofiber}.
		Since $g$ is a natural transformation, we have the following commutative diagram with exact rows:
		\[\begin{tikzcd}[row sep=13pt]
		0 \arrow{r} 
		& X^{a}\arrow[hook]{r}{i}\ar{d}
		& CX^{a+1\to a} \arrow[two heads]{r}{p} \ar{d}[swap]{S^{-w_{a}}g^{a}}
		& SX^{a+1} \arrow{r} \ar{d}{S\widehat{g}^{a+1}}
		& 0
		\\
		0\arrow{r} 
		& Y^{a}\arrow[hook]{r}{i} 
		& CY^{a+1\to a}\arrow[two heads]{r}{p} 
		& SY^{a+1}\arrow{r} 
		& 0
		\end{tikzcd}
		\]
		Define $\widehat{g}^{a}\colon X^{a}\to Y^{a}$ to be the left vertical morphism in this diagram. 
		This diagram leads to a homotopy commutative square with a choice of a homotopy:
		\[
		\begin{tikzcd}[column sep=45pt, row sep=13pt]
		X^{a}\ar{d}[swap]{\widehat{g}^{a}}
		& X^{a+1}\ar{l}[swap]{X^{a+1\to a}}\ar{d}{\widehat{g}^{a+1}}
		\ar[Rightarrow]{dl}[swap]{h_{a+1}}
		\\
		Y^{a} & Y^{a+1}\ar{l}{Y^{a+1\to a}}
		\end{tikzcd}
		\]
	\end{itemize}
	
	By applying this algorithm, we obtain a bijection between the set of natural transformations $\overline{X}\to \overline{Y}$ and the set of pairs consisting of a sequence of morphisms $\{\widehat{g}^a\colon X^a\to Y^a\}_{1\leq a\leq k}$ 
	and a sequence of homotopies $\{h^{a}\colon X^{a}\to Y^{a-1}\ |\ 1\leq a\leq k\text{ and }c_a=l\}$  such that, for all $a$ in $\{1,\ldots,k\}$:	
	\[
	\begin{array}{c|c}
	\text{ if }c_a=r & \text{ if } c_a=l
	\\
	\hline
	\begin{tikzcd}[column sep=45pt, row sep=13pt]
	X^{a-1}\ar{r}{X^{a-1\to a}} \ar{d}[swap]{\widehat{g}^{a-1}}
	& X^a \ar{d}{\widehat{g}^a}
	\\
	Y^{a-1}\ar{r}{Y^{a-1\to a}} 
	& Y^{a}
	\end{tikzcd}
	&
	\begin{tikzcd}[column sep=45pt, row sep=13pt]
	X^{a-1}\ \ar{d}[swap]{\widehat{g}^{a-1}} & X^a \ar{d}{\widehat{g}^a} \ar{l}[swap]{X^{a\to a-1}} \ar[Rightarrow]{dl}[swap]{h_a}\\
	Y^{a-1}\ &Y^{a}\ar{l}{Y^{a\to a-1}}
	\end{tikzcd}
	\end{array}
	\]
	
	Here is a consequence of this enumeration:
\end{point}

\begin{corollary}\label{indecomposable_zigzags}
	Let $X\colon[k]_c\to \text{\rm ch}$ be a discrete 
	zigzag of compact chain complexes. 
	Assume that $X^{a}$ has trivial differentials for all $a$ in $\{0,\ldots,k\}$. 
	Then $\overline{X}$ is indecomposable if and only if $X$ is indecomposable.
\end{corollary}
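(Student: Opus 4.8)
The plan is to translate indecomposability into a statement about idempotents in endomorphism rings. Both the category of discrete zigzags $[k]_c\to\ch$ and the category of functors $[k]\to\ch$ are functor categories into the idempotent-complete (indeed abelian) category $\ch$, and hence are themselves idempotent complete. Consequently a nonzero object $Z$ in either category is indecomposable if and only if its endomorphism ring $\operatorname{End}(Z)$ contains no idempotents other than $0$ and $\id$. Since the straightening functor $X\mapsto\overline{X}$ is additive, faithful and commutes with direct sums, it sends a nontrivial decomposition $X\cong X_1\oplus X_2$ to a nontrivial decomposition $\overline{X}\cong\overline{X_1}\oplus\overline{X_2}$: faithfulness forces $\overline{X_i}\neq 0$ whenever $X_i\neq 0$, because $\overline{\id_{X_i}}=\id_{\overline{X_i}}$ and a faithful functor reflects zero objects. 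This already proves that $\overline{X}$ indecomposable implies $X$ indecomposable, and it requires no hypothesis on the differentials.

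For the reverse implication I would exploit the enumeration of natural transformations $\overline{X}\to\overline{Y}$ established above, and here the hypothesis that every $X^a$ has trivial differentials is decisive. For each index $a$ with $c_a=l$ the associated square is only homotopy commutative in general, but when the differentials of its source and target vanish the implication recorded in \ref{point_cofiber} turns it into a genuinely commutative square. Thus the family $\{\widehat{g}^a\}$ attached to any $g\colon\overline{X}\to\overline{X}$ is an honest endomorphism of the zigzag $X$, so the assignment $\pi\colon\operatorname{End}(\overline{X})\to\operatorname{End}_{[k]_c}(X)$, $g\mapsto\{\widehat{g}^a\}$, is well defined. Reading off the matrix form of a cofiber morphism from \ref{point_cofiber}, namely $\left[\begin{smallmatrix}\widehat{g}^a & h\\ 0 & \widehat{g}^{a+1}\end{smallmatrix}\right]$, one checks that $\pi$ is a unital ring homomorphism: the diagonal entries multiply as $\widehat{g}\,\widehat{g'}$, while the homotopies contribute only off the diagonal. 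Moreover $\pi$ is surjective, since $\pi(\overline{f})=f$ for every zigzag endomorphism $f$.

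It remains to analyse $\ker\pi$. If $\widehat{g}^a=0$ for all $a$, then at each position with $c_{a+1}=r$ (and at $a=k$) the component $g^a=S^{w_a}\widehat{g}^a$ vanishes, while at each position with $c_{a+1}=l$ the component $g^a$ is, up to suspension, the cofiber morphism with zero diagonal, i.e. the strictly upper triangular matrix $\left[\begin{smallmatrix}0 & h\\ 0 & 0\end{smallmatrix}\right]$, whose square is zero. As composition of natural transformations is computed componentwise, $gg'=0$ for all $g,g'\in\ker\pi$, so $\ker\pi$ is a square-zero, in particular nil, ideal. A nil ideal detects idempotents: an idempotent $e\in\operatorname{End}(\overline{X})$ has $\pi(e)=0$ only if $e=e^2=0$ and $\pi(e)=\id$ only if $\id-e$ is a nilpotent idempotent, whence $e=\id$; conversely nontrivial idempotents lift along nil ideals. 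Therefore $\operatorname{End}(\overline{X})$ has a nontrivial idempotent if and only if $\operatorname{End}_{[k]_c}(X)$ does, which yields the remaining implication. The main obstacle is exactly the verification that $\ker\pi$ is square-zero: this is the single place where the triviality of all differentials enters, both to guarantee that $\widehat{g}$ is an actual zigzag morphism (so that $\pi$ exists as a ring map) and to force every homotopy contribution to square to zero.
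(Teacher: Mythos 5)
Your proof is correct and follows essentially the same route as the paper: one direction via additivity and faithfulness of the straightening functor, the other by passing an idempotent $g$ of $\overline{X}$ to the idempotent $\{\widehat{g}^a\}$ of $X$, which is an honest natural transformation because trivial differentials turn the homotopy-commutative squares into commutative ones. Your explicit check that $\ker\pi$ is a square-zero ideal usefully fills in the detail the paper leaves implicit, namely why the idempotent $\{\widehat{g}^a\}$ remains nontrivial.
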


\begin{proof}
	Since the functor $X\mapsto\overline{X}$ commutes with the direct sum and is faithful, if $X$ is decomposable, then so is $\overline{X}$. 
	If $\overline{X}$ is decomposable, then there is an idempotent morphism $g\colon\overline{X}\to\overline{X}$ which is not an isomorphism.
	As the differentials of $X^{a}$'s are trivial, the morphisms $\{\widehat{g}_a\}_{1\leq a\leq k}$ form an idempotent natural transformation $X\to X$ (see \ref{point_cofiber}), which is not an isomorphism.
	Consequently, $X$ is also decomposable.
\end{proof}

\begin{definition}
	An object in $\tame$ is called a {\bf zigzag} if it is isomorphic to the Kan extension, along some sequence $\tau_0<\cdots<\tau_k$ in $\posr$, of a functor of the form $\overline{X}$ for some discrete zigzag $X\colon [k]_c\to \vect\subset\ch$ whose values are concentrated in degree $0$. 
	Such a zigzag in $\tame$ is called an {\bf incarnation} of $X$.
\end{definition}

We think about $\tame$ as an ambient category containing  various incarnations of discrete zigzags of the form $X\colon [k]_c\to   \vect\subset \ch$ indexed by posets $[k]_c$ for different $k$'s and different profiles $c$. 
Important properties of discrete zigzags are reflected well by their incarnations. 
For example, according to Corollary~\ref{indecomposable_zigzags}, a discrete zigzag $X\colon [k]_c\to\vect\subset\ch$ is indecomposable if and only if all (equivalently any) of its incarnations are indecomposable in $\tame$.  
We can also use the category $\tame$ and its morphisms to compare discrete zigzags indexed by different posets. 
Furthermore, the model structure on $\tame$ can be utilised to extract invariants of discrete zigzags through taking minimal representatives and minimal covers of their incarnations. 
However, the minimal cover is not a complete invariant for zigzags.
Consider the following non-isomorphic discrete zigzags:

\noindent
$\quad X:$
\begin{tikzcd}[ampersand replacement = \&, column sep=0.7cm]
K \arrow{r}{\left[\begin{smallmatrix} \id \\ 0 \end{smallmatrix}\right]}
\& K^{2} 
\& K^{2} \ar[l, "\id"'] \arrow{r}{\left[\begin{smallmatrix} \id & 0 \end{smallmatrix}\right]}
\& K,
\end{tikzcd}
$\qquad \quad Y:$
\begin{tikzcd}[ampersand replacement = \&, column sep=0.7cm]
K \arrow{r}{\left[\begin{smallmatrix} \id \\ 0 \end{smallmatrix}\right]}
\& K^{2} 
\& K^{2} \ar[l, "\id"'] \arrow{r}{\left[\begin{smallmatrix} 0 & \id \end{smallmatrix}\right]}
\& K
\end{tikzcd}

\noindent
The minimal covers of their incarnations $L\overline{X}$ and $L\overline{Y}$ along a sequence $\tau_0<\tau_1<\tau_2<\tau_3$ in $\posr$ coincide:
$\text{cov}(L\overline{X})\cong \text{cov}(L\overline{Y})\cong \intS[0]{\tau_0,\tau_1}^{2}\oplus \intS[1]{\tau_2,\tau_3}\oplus \intS[1]{\tau_2,\infty}$.
As a consequence, neither the minimal representative is a complete invariant for zigzags.

\paragraph*{Acknowledgments.}   
W.\ Chach\'olski was partially supported by VR and  the Wallenberg AI, Autonomous System and Software Program (WASP) funded by Knut and Alice Wallenberg Foundation.
B.\ Giunti was partially supported by INdAM-GNFM. This work was partially carried out by the last two authors within the activities of ARCES (University of Bologna), and by the second author during her PhD at the University of Pavia, with the partial support of INFN and the hospitality of the mathematics department of KTH Stockholm.

\bibliographystyle{plain}
	
\end{document}